\g@addto@macro\normalsize{%
  \setlength\abovedisplayskip{4pt}
  \setlength\belowdisplayskip{1pt}
  \setlength\abovedisplayshortskip{4pt}
  \setlength\belowdisplayshortskip{1pt}
}
\def\arraystretch{1.2}
\numberwithin{equation}{section}
\newtheorem{theorem}{Theorem}[section]
\newtheorem{lemma}[theorem]{Lemma}
\newtheorem{corollary}[theorem]{Corollary}
\newtheorem{proposition}[theorem]{Proposition}
\newtheorem{definition}[theorem]{Definition}
\newtheorem{remark}[theorem]{Remark}        
\numberwithin{equation}{section}
\def\aa{\mathcal{A}}
\newcommand{\aaa}{\overline{\mathcal{A}}}
\newcommand{\lamot}{\La_0,\La_1}
\newcommand{\pa}{\partial}
\newcommand{\ddt}[1]{\frac{d#1}{dt}}
\newcommand{\dds}[1]{\frac{d#1}{ds}}
\newcommand{\vlh}{\lsbt{v}{\la,h}}
\newcommand{\vl}{\lsbt{v}{\la}}
\newcommand{\elam}{\lsbo{E}{\lambda}}
\newcommand{\bff}{{\bf f}}
\newcommand{\mm}{\mathcal{M}}
\newcommand{\tQ}{\tilde{Q}}
\newcommand{\tom}{\tilde{\Om}}
\newcommand{\tz}{\tilde{z}}
\newcommand{\vo}{\vec{o}\@ifnextchar{^}{\,}{}}
\def\Yint#1{\mathchoice
    {\YYint\displaystyle\textstyle{#1}}%
    {\YYint\textstyle\scriptstyle{#1}}%
    {\YYint\scriptstyle\scriptscriptstyle{#1}}%
    {\YYint\scriptscriptstyle\scriptscriptstyle{#1}}%
      \!\iint}
\def\YYint#1#2#3{{\setbox0=\hbox{$#1{#2#3}{\iint}$}
    \vcenter{\hbox{$#2#3$}}\kern-.50\wd0}}
\def\longdash{-\mkern-9.5mu-} 
\def\tiltlongdash{\rotatebox[origin=c]{18}{$\longdash$}}
\def\fiint{\Yint\tiltlongdash}
\def\Xint#1{\mathchoice
    {\XXint\displaystyle\textstyle{#1}}%
    {\XXint\textstyle\scriptstyle{#1}}%
    {\XXint\scriptstyle\scriptscriptstyle{#1}}%
    {\XXint\scriptscriptstyle\scriptscriptstyle{#1}}%
      \!\int}
\def\XXint#1#2#3{{\setbox0=\hbox{$#1{#2#3}{\int}$}
    \vcenter{\hbox{$#2#3$}}\kern-.50\wd0}}
\def\hlongdash{-\mkern-13.5mu-}
\def\tilthlongdash{\rotatebox[origin=c]{18}{$\hlongdash$}}
\def\hint{\Xint\tilthlongdash}
\def\namedlabel#1#2{\begingroup
   \def\@currentlabel{#2}%
   \label{#1}\endgroup
}
\newcommand{\rmh}[1]{\mathpalette{\raisem@th{#1}}}
\newcommand{\raisem@th}[3]{\hspace*{-1pt}\raisebox{#1}{$#2#3$}}
\newcommand{\lsb}[2]{#1_{\rmh{-3pt}{#2}}}
\newcommand{\lsbo}[2]{#1_{\rmh{-1pt}{#2}}}
\newcommand{\lsbt}[2]{#1_{\rmh{-2pt}{#2}}}
\newcommand{\redref}[2]{\texorpdfstring{\protect\hyperlink{#1}{\textcolor{black}{(}\textcolor{red}{#2}\textcolor{black}{)}}}{}}
\newcommand{\redlabel}[2]{\hypertarget{#1}{\textcolor{black}{(}\textcolor{red}{#2}\textcolor{black}{)}}}
\newcommand{\descitem}[2]{\item[#1] \label{#2}}
\newcommand{\descref}[2]{\hyperref[#1]{\textnormal{\textcolor{black}{(}\textcolor{blue}{\bf #2}\textcolor{black}{)}}}}
\newcommand{\dref}[2]{\hyperref[#1]{\textcolor{black}{(}\textcolor{blue}{\bf #2}\textcolor{black}{)}}}
\newcommand{\tk}{\tilde{k}}
\newcommand{\mfx}{\mathfrak{x}}
\newcommand{\mfz}{\mathfrak{z}}
\newcommand{\mft}{\mathfrak{t}}
\newcommand{\mch}{\mathcal{H}}
\newcommand\RR{\mathbb{R}}
\newcommand\ZZ{\mathbb{Z}}
\newcommand\NN{\mathbb{N}}
\newcommand{\al}{\alpha}
\newcommand{\be}{\beta}
\newcommand{\ga}{\gamma}
\newcommand{\de}{\delta}
\newcommand{\ve}{\varepsilon}
\newcommand{\ep}{\epsilon}
\newcommand{\ka}{\kappa}
\newcommand{\om}{\omega}
\newcommand{\la}{\lambda}
\newcommand{\vt}{\vartheta}
\newcommand{\Om}{\Omega}
\newcommand{\La}{\Lambda}
\newcommand{\tTh}{{\Upsilon}}
\DeclareMathOperator{\dv}{div}
\DeclareMathOperator{\spt}{spt}
\DeclareMathOperator{\diam}{diam}
\newcommand{\iprod}[2]{\langle #1 \ ,  #2\rangle}
\newcommand{\norm}[1]{\left|\hspace{-0.2mm}\left| #1 \right|\hspace{-0.2mm}\right|}
\newcommand{\abs}[1]{\left| #1\right|}
\newcommand{\mgh}[1]{\left\{ #1\right\}}
\newcommand{\lbr}[1][(]{\left#1}
\newcommand{\rbr}[1][)]{\right#1}
\newcommand{\avgs}[2]{\lsbo{\lbr #1 \rbr}{#2}}
\newcommand{\txt}[1]{\qquad \text{#1} \qquad}
\newcommand{\omt}{ \Om \times (-T,T)}
\newcommand{\ov}{\overline{V}}
\newcommand{\zv}{\zeta_{\ve}}
\newcommand{\gflt}{$(\ga,S_0)$-Reifenberg flat }
\newcommand{\ors}{Q_{\rho,s}(\mfz)}
\newcounter{whitney}
\newcounter{ineqcounter}
\def\ps@pprintTitle{%
 \let\@oddhead\@empty
 \let\@evenhead\@empty
 \def\@oddfoot{}%
 \let\@evenfoot\@oddfoot}
\begin{document}

\begin{frontmatter}

\title{Gradient weighted estimates at the natural exponent for Quasilinear Parabolic equations.}

\author[myaddress]{Karthik Adimurthi\corref{mycorrespondingauthor}\tnoteref{thanksfirstauthor}}
\cortext[mycorrespondingauthor]{Corresponding author}
\ead{karthikaditi@gmail.com and kadimurthi@snu.ac.kr}
\tnotetext[thanksfirstauthor]{Supported by the National Research Foundation of Korea grant NRF-2017R1A2B2003877.}

\author[myaddress,myaddresstwo]{Sun-Sig Byun\tnoteref{thankssecondauthor}}
\ead{byun@snu.ac.kr}
\tnotetext[thankssecondauthor]{Supported by the National Research Foundation of Korea grant  NRF-2015R1A4A1041675. }

\address[myaddress]{Department of Mathematical Sciences, Seoul National University, Seoul 08826, Korea.}
\address[myaddresstwo]{Research Institute of Mathematics, Seoul National University, Seoul 08826, Korea.}

\begin{abstract}
In this paper, we obtain weighted norm inequalities for the spatial gradients of weak solutions to quasilinear parabolic equations with weights in the Muckenhoupt class $A_{\frac{q}{p}}(\RR^{n+1})$ for $q\geq p$ on non-smooth domains. Here the quasilinear nonlinearity is  modelled after the standard $p$-Laplacian operator. Until now, all the weighted estimates for the gradient were obtained only for exponents $q>p$. The results for exponents $q>p$ used the full complicated machinery of the Calder\'on-Zygmund theory developed over the past few decades, but the constants blow up as $q \rightarrow p$ (essentially because the Maximal function is not bounded on $L^1$). 

In order to prove the weighted estimates for the gradient at the natural exponent, i.e., $q=p$, we need to obtain improved a priori estimates below the natural exponent. To this end, we develop the technique of Lipschitz truncation based on \cite{AdiByun2,KL} and obtain significantly improved  estimates below the natural exponent. Along the way, we also obtain improved, unweighted Calder\'on-Zygmund type estimates below the natural exponent which is new even for the linear equations.
\end{abstract}

\begin{keyword}
Quasilinear parabolic equations, Muckenhoupt weights, Lipschitz truncation
\MSC[2010] 35K10\sep  99-00\sep 35K59\sep 35K65\sep 35K67.
\end{keyword}

\end{frontmatter}

\tableofcontents

\section{Introduction}
\label{section_zero}
In this paper, we are interested in obtaining Calder\'on-Zygmund type regularity estimates in weighted Lebesgue spaces for equations of the form
\begin{equation}
\label{main}
 \left\{
 \begin{array}{ll}
u_t - \dv  \aa(x,t,\nabla u) = \dv |\bff|^{p-2} \bff  & \text{in} \ \Om \times (-T,T), \\
u = 0 & \text{on} \ \partial_p\lbr  \Om \times (-T,T)\rbr,
 \end{array}
\right.
\end{equation}
where the nonlinearity $ \aa(x,t,\nabla u)$ is modelled after the well studied $p$-Laplacian operator given by $|\nabla u|^{p-2} \nabla u$ in a bounded domain $\Om\subset \RR^n$ with $n\geq 2$,  potentially with non-smooth boundary $\pa \Om$. The parabolic boundary is given by $\pa_p\lbr  \Om \times (-T,T)\rbr := \pa \Om \times (-T,T) \bigcup \Om \times \{-T\}$.

Over the past decades, there have been a plethora of a priori estimates of the Calder\'on-Zygmund type obtained for \eqref{main}. We shall point out that all the estimates discussed in the introduction are quantitative, but in order to highlight the novelty of the results in this paper, we shall only mention the qualitative nature of the estimates existing in the literature. 

The first extension of the Calder\'on-Zygmund theory for \eqref{main} with  $\aa(x,t,\nabla u) = |\nabla u|^{p-2} \nabla u$ for $p > \frac{2n}{n+2}$ (note that this restriction is natural for parabolic problems, see \cite[Chapter 5]{DiB1}) was obtained in \cite{AM2},  where they proved 
\[
|\bff| \in L^q_{loc} \quad \Longrightarrow\quad |\nabla u| \in L^q_{loc} \txt{for all} q \geq p.
\]

Since then, many extensions were obtained which generalized the estimates in \cite{AM2} to more general nonlinearities, function spaces and up to the boundary (see \cite{MR3090083,MR3143810,MR3461425,BOS1,MR2866816,MR2836359} and the references therein). \emph{In this paper, the first result we will prove is  an improved global a priori estimate of the form}
\[
|\bff| \in L^q (\Om_T) \quad \Longrightarrow\quad |\nabla u| \in L^q (\Om_T) \txt{for all} q \in [p-\be_0,p],
\]
where $\be_0$ is a sufficiently small universal exponent. The improvement is two fold, firstly this estimate is obtained below the natural exponent and secondly, the estimate assumes no regularity of the coefficients and hence is non-perturbative. \emph{As a consequence, this result is new even for linear equations.}

\emph{The second result that we are interested in obtaining is global estimates in weighted Lebesgue spaces with the weight in Muckenhoupt class.}
%
%
For general nonlinear structures with linear growth, i.e., $\aa(x,t,\nabla u) \approx \nabla u$  with the coefficients satisfying a small bounded mean oscillation restriction, the following global weighted estimates was obtained in \cite{byun2013weighted}:
\[
|\bff| \in L^q_{\om}(\Om_T) \quad \Longrightarrow\quad |\nabla u| \in L^q_{\om}(\Om_T) \txt{for all} q > 2 \txt{and} \om \in A_{\frac{q}{2}}(\RR^{n+1}).
\]
Note that in particular, they cannot consider $q=2$ in \cite{byun2013weighted}.

Subsequently, in \cite{byun2017weighted}, they were able to prove analogous results for nonlinearities of the form $\aa(x,t,\nabla u) \approx  |\nabla u|^{p-2} \nabla u$ with $\frac{2n}{n+2} < p <\infty$ and more general Weighted Orlicz spaces, in particular, they prove
\[
|\bff| \in L^q_{\om}(\Om_T) \quad \Longrightarrow\quad |\nabla u| \in L^q_{\om}(\Om_T) \txt{for all} q > p \txt{and} \om \in A_{\frac{q}{p}}(\RR^{n+1}).
\]
Note that in particular, they cannot consider $q=p$ in \cite{byun2017weighted}.

The main obstacle in proving weighted estimates at $q=p$ is due to the failure of strong $L^1-L^1$ bounds for the Hardy-Littlewood Maximal function. Therefore to reach the natural exponent, a different approach is needed. In this paper, we achieve this result by showing the weighted estimate holds with $q=p$, i.e., \eqref{intro_est} holds. To overcome this difficulty, we construct a suitable test function based on a modification of  the techniques developed in \cite{AdiByun2,KL} and obtain estimates below the natural exponent, i.e., under suitable restrictions on the $\aa(x,t,\nabla u)$ and $\Om$ (similar to those in \cite{byun2017weighted}), we prove
\begin{equation}\label{intro_est}
|\bff| \in L^q_{\om}(\Om_T) \quad \Longrightarrow\quad |\nabla u| \in L^q_{\om}(\Om_T) \txt{for all} q \geq p \txt{and} \om \in A_{\frac{q}{p}}(\RR^{n+1}).
\end{equation}

There are a few remarks to be made;  firstly the estimate \eqref{intro_est} represents an end point weighted estimate for quasilinear parabolic equations; secondly, the optimal weight class in the elliptic case is conjectured to be $A_{\frac{q}{p-1}}$ (see \cite[Theorem 1.9]{AP2} for more on this and the elliptic Iwaniec conjecture) and in the parabolic case too, the optimal result is expected to be of the form
\[
|\bff| \in L^q_{\om}(\Om_T) \quad \Longrightarrow\quad |\nabla u| \in L^q_{\om}(\Om_T) \txt{for all} q > p-1 \txt{and} \om \in A_{\frac{q}{p-1}}(\RR^{n+1}),
\]
but this result seems to be far out of reach of current methods.

The plan of the paper is as follows: In Section \ref{section_one}, we collect all the assumptions on the domain, nonlinear structure and the weight class along with some preliminary well known results, in Section \ref{section_two}, we will describe the main theorem that will be proved, in Section \ref{section_three}, we will develop a general Lipschitz truncation technique and construct a suitable test function, in Section \ref{section_four}, we will define useful perturbations of \eqref{main} and prove  crucial difference estimates below the natural exponent, in Section \ref{section_five}, we will prove Theorem \ref{thm6.1}, in Section \ref{section_six}, we will use standard covering arguments to prove the parabolic analogue of a good-$\la$ estimate and finally use that in Section \ref{section_seven} to prove Theorem \ref{main_theorem}.

\subsection*{Acknowledgement} 
The authors would like to thank the organisers of the conference \emph{Recent developments in Nonlinear Partial Differential Equations and Applications - NPDE2017} held at TIFR-CAM, Bangalore where part of this work was done.

\section{Preliminaries}
\label{section_one}

The following restriction on the exponent $p$ will always be enforced:
\begin{equation}
 \label{restriction_p}
 \frac{2n}{n+2} < p < \infty.
\end{equation}

\begin{remark}The restriction in \eqref{restriction_p} is necessary when dealing with parabolic problems because,  we invariably have to deal with the $L^2$-norm of the solution which comes from the time-derivative. On the other hand, the following Sobolev  embedding $W^{1,p} \hookrightarrow L^2$ is true provided \eqref{restriction_p} holds.  On the other hand, if we assume $u \in L^{r}(\Om_T)$ for some $r\geq 1$ such that $\La_r:=n(p-2) + rp >0$ (see \cite[Chapter 5]{DiB1} for more on this), then we can obtain analogous result as to Theorem \ref{main_theorem}. This extension of Theorem \ref{main_theorem} to the case $1< p \leq \frac{2n}{n+2}$ requires only a technical modification provided $\La_r > 0$ and will be omitted. \end{remark}

\subsection{Assumptions on the Nonlinear structure}
\label{nonlinear_structure}
We shall now collect the assumptions on the nonlinear structure in \eqref{main}. 
We assume that $\aa(x,t,\nabla u)$ is a Carath\'eodory function, i.e., we have $(x,t) \mapsto \aa(x,t,\zeta)$  is measurable for every $\zeta \in \RR^n$ and 
$\zeta \mapsto \aa(x,t,\zeta)$ is continuous for almost every  $(x,t) \in \Om \times (-T,T)$. We also assume $\aa(x,t,0) = 0$ and $\aa(x,t,\zeta)$ is differentiable in $\zeta$ away from the origin, i.e., $d_{\zeta}\aa(x,t,\zeta)$ exists for a.e. $(x,t) \in \RR^{n+1}$.

We further assume that for a.e. $(x,t) \in \Om \times (-T,T)$ and for any $\eta,\zeta \in \RR^n$, there exists two given positive constants $\lamot$ such that  the following bounds are satisfied   by the nonlinear structures :
\begin{gather}
\iprod{\aa(x,t,\zeta - \aa(x,t,\eta)}{\zeta - \eta}\geq  \La_0 \lbr |\zeta|^2 + |\eta|^2 \rbr^{\frac{p-2}{2}} |\zeta - \eta|^2, \label{abounded}\\
|\aa(x,t,\zeta) - \aa(x,t,\eta)| \leq \La_1  |\zeta-\eta|\lbr |\zeta|^2 + |\eta|^2 \rbr^{\frac{p-2}{2}}. \label{bbounded}
\end{gather}

Note that from the assumption $\aa(x,t,0) = 0$, we get for a.e. $(x,t) \in \RR^{n+1}$, there holds
\begin{equation*}
\label{bound_b}
|\aa(x,t,\zeta)| \leq \La_1 |\zeta|^{p-1}.
\end{equation*}

\subsection{Structure of \texorpdfstring{$\Om$}.}
The domain that we consider may be non-smooth but should satisfy some regularity condition. This condition would essentially say that at each boundary point and every scale, we require the boundary of the domain to be between two hyperplanes separated by a distance  proportional to the scale.  

\begin{definition}
\label{reif_flat}
Given any $\ga \in (0,1]$ and $S_0 >0$, we say that $\Om$ is \gflt domain if for every $x_0 \in \pa \Om$ and every $r \in (0,S_0]$, there exists a system of coordinates $\{y_1,y_2,\ldots,y_n\}$ (possibly depending on $x_0$ and $r$) such that in this coordinate system, $x_0 =0$ and 
\[
B_r(0) \cap \{y_n > \ga r\} \subset B_r(0) \subset B_r(0) \cap \{y_n > -\ga r\}.
\]
\end{definition}

The class of Reifenberg flat domains is standard in obtaining Calder\'on-Zygmund type estimates, in the elliptic case, see \cite{AP2,BO,BO1,BW-CPAM} and the references therein whereas for the parabolic case, see \cite{Bui1,MR3461425,BOS1,MR2836359} and the references therein. 

From the definition of \gflt domains, it is easy to see that the following property holds:
\begin{lemma}
\label{measure_density}
Let $\ga >0$ and $S_0>0$ be given and suppose that $\Om$ is a \gflt domain, then there exists an $m_e = m_e(\ga,S_0,n)\in(0,1)$ such that for every $x \in {\Om}$ and every $r >0$, there holds
\begin{equation}\label{msd}
|\Om^c \cap B_r(x)| \geq m_e |B_r(x)|.
\end{equation}
\end{lemma}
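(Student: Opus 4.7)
The plan is to deduce the measure-density estimate directly from Definition \ref{reif_flat} by exhibiting a spherical cap of size comparable to $|B_r|$ inside $B_r(x) \cap \Om^c$, first at a boundary point and then by a near-boundary reduction for general $x$.

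For the base case, take $x \in \partial \Om$ and $r \in (0, S_0]$. Definition \ref{reif_flat} supplies local coordinates in which
$$ B_r(x) \cap \Om \,\subset\, B_r(x) \cap \{y_n > -\ga r\}, $$
and taking complements gives
$$ B_r(x) \cap \Om^c \,\supset\, B_r(x) \cap \{y_n \leq -\ga r\}. $$
The right-hand side is a spherical cap whose volume, by an elementary integration, is bounded below by $c_0(n,\ga) |B_r(x)|$ for some $c_0(n,\ga) \in (0,1)$ provided $\ga < 1$. This settles the estimate with constant $c_0(n,\ga)$ at boundary points at small scales.

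For a general $x \in \Om$ and $r \leq S_0$, I would locate a boundary point at a comparable scale. When $\dist(x,\pa\Om) < r/2$, there exists $x_0 \in \pa\Om$ with $|x - x_0| < r/2$, so $B_{r/2}(x_0) \subset B_r(x)$; applying the base case at $(x_0, r/2)$ and using monotonicity yields
$$|B_r(x) \cap \Om^c| \geq |B_{r/2}(x_0) \cap \Om^c| \geq c_0(n,\ga)|B_{r/2}(x_0)| = 2^{-n}c_0(n,\ga)|B_r(x)|.$$
For radii $r > S_0$, I would combine the previous step (applied at scale $S_0$ to a nearby boundary point) with the boundedness of $\Om$, since then $|B_r(x)|$ dominates $|\Om|$ up to a factor depending only on $\diam(\Om)$ and $S_0$, so $|B_r(x)\cap\Om^c| \geq |B_r(x)|-|\Om|$ is comparable to $|B_r(x)|$.

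The principal step is the base case, which is essentially tautological once the cap-volume computation is carried out; the remaining regimes are routine geometric bookkeeping. Setting $m_e$ equal to the minimum of the constants produced in the three regimes yields the claimed uniform $m_e = m_e(\ga,S_0,n) \in (0,1)$. The only subtle point is ensuring that the constant does not degenerate as $\ga \to 1$ or $S_0 \to 0$, which is transparent from the explicit cap bound and the scaling.
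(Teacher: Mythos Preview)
The paper gives no proof; it simply asserts the claim follows from Definition \ref{reif_flat}. More to the point, the lemma as literally stated is false: for $x$ in the interior of $\Om$ with $r < \dist(x,\pa\Om)$ one has $B_r(x) \subset \Om$ and hence $|\Om^c \cap B_r(x)| = 0$. The intended hypothesis is surely $x \in \pa\Om$ (compare the phrasing of the measure density condition in Proposition \ref{ext_sol}), and this is all the paper actually uses.

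Your base case ($x \in \pa\Om$, $r \leq S_0$) is the correct argument for that intended statement. The gap in your write-up appears exactly where the stated lemma is false: for general $x \in \Om$ with $r \leq S_0$ you treat only the sub-case $\dist(x,\pa\Om) < r/2$ and silently drop the complementary one, which cannot be salvaged by any argument. Your large-$r$ regime is also incomplete: the bound $|B_r(x) \cap \Om^c| \geq |B_r(x)| - |\Om|$ is useful only once $|B_r(x)| > |\Om|$, which need not hold for $r$ just above $S_0$, and in any event introduces dependence on $\diam(\Om)$, which the stated form $m_e = m_e(\ga,S_0,n)$ forbids. A minor slip: the cap volume $c_0(n,\ga)$ does vanish as $\ga \to 1$, contrary to your closing remark.
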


\subsection{Smallness Assumption}
In order to prove the main results, we need to assume a smallness condition satisfied by $(\aa,\Om)$.
\begin{definition}
Let $\ga \in (0,1)$ and $S_0 >0$ be given, we then say $(\aa,\Om)$ is $(\ga,S_0)$-vanishing if the following assumptions hold:
\begin{description}[leftmargin=*]
  \item[(i) Assumption on $\aa$:] For any parabolic cylinder $Q_{\rho,s}(\mfz)$ centered at $\mfz := (\mfx,\mft)\in \RR^{n+1}$, let us define the following:
  \begin{equation*}
   \label{a_difference}
   \Theta(\aa,Q_{\rho,s}(\mfz))(x,t) := \sup_{\zeta \in \RR^n\setminus \{0\}} \frac{\abs{\aa(x,t,\zeta) - \aaa_{B_{\rho}(\mfx)}(t,\zeta)}}{|\zeta|^{p-1}},
  \end{equation*}
where we have used the notation 
\begin{equation}
\label{def_aaa}
\aaa_{B_{\rho}(\mfx)}(t,\zeta) := \fint_{B_{\rho}(\mfx)} \aa(x,t,\zeta) \ dx.
\end{equation}
Then $\aa$ is said to be $(\ga,S_0)$ vanishing if for some $\tau \in [1,\infty)$, there holds
\begin{equation}
 \label{small_aa}
 [\aa]_{\tau,S_0} := \sup_{\substack{0<\rho\leq S_0\\0<s\leq S_0^2}} \sup_{\mfz \in  \RR^{n+1}} \fint_{Q_{\rho,s}(\mfz)} \abs{\Theta(\aa,B_{\rho}(\mfx))(z)}^{\tau} \ dz \leq \ga^{\tau}.
\end{equation}
Here we have used the notation $z := (x,t) \subset \RR^{n+1}$.

\item[(ii) Assumption on $\pa \Om$:] We ask that $\Om$ is a \gflt  in the sense of  Definition \ref{reif_flat}. 
 \end{description}
\end{definition}
  
\begin{remark}
From \eqref{abounded}, we see that $|\Theta(\aa,Q_{\rho,s}(\mfz))(x,t)| \leq 2 \La_1$, thus combining this with the assumption \eqref{small_aa}, we see from standard interpolation inequality that for any $1 \leq \mathfrak{t} <\infty$, there holds
\[
\fint_{Q_{\rho,s}(\mfz)} |\Theta(\aa,Q_{\rho,s}(\mfz))(z)|^{\mathfrak{t}} \ dz \leq C(\ga,\La_1),
\]
with $C(\ga, \La_1) \rightarrow 0$ whenever $\ga \rightarrow 0$. 
\end{remark}

\subsection{Muckenhoupt weights}

In this subsection, let us collect all the properties of the weights that will be considered in the paper. See \cite[Chapter 9]{grafakos2004classical} for the details concerning this subsection.
\begin{definition}[Strong Muckenhoupt Weight]
\label{muck_weight}
A non negative, locally integrable function $\om$ is a \emph{strong weight} in $A_q(\RR^{n+1})$ for some $1 < q < \infty$ if 
\begin{equation*}
\label{m_wght}
\sup_{\mfz \in \RR^{n+1}}\sup_{\substack{0<\rho<\infty,\\0<s<\infty}} \lbr \fiint_{Q_{\rho,s}(\mfz)} \om(z) \ dz \rbr \lbr \fiint_{Q_{\rho,s}(\mfz)}\om^{\frac{-1}{q-1}}(z) \ dz \rbr^{q-1} =: [\om]_{q} < \infty.
\end{equation*}

In the case $q =1$, we define the \emph{strong $A_1(\RR^{n+1})$ weight} to be the class of non negative, locally integrable function $\om \in A_1(\RR^{n+1})$ satisfying
\begin{equation*}
\label{m_wght_1}
\sup_{z \in \RR^{n+1}}\sup_{\substack{0<\rho<\infty,\\0<s<\infty}} \lbr \fiint_{Q_{\rho,s}(\mfz)} \om(z) \ dz \rbr \norm{\om^{-1}}_{L^{\infty}(Q_{\rho,s}(\mfz))} =: [\om]_{1} < \infty.
\end{equation*}

The quantity $[w]_q$ for $1\leq q < \infty$ will be called as the $A_q$ constant of the weight $\om$. 
\end{definition}

We will need the following important characterization of Muckenhoupt weights:
\begin{lemma}
\label{weight_lemma}
A parabolic weight $w \in A_q$ for $1<q<\infty$ if and only if 
\[
\lbr \frac{1}{|Q|} \iint_Q f(x,t)  \ dx \ dt \rbr^q \leq \frac{c}{w(Q)} \iint_Q |f(x,t)|^q w(x,t) \ dx \ dt,
\]
holds for all non-negative, locally integrable functions $f$ and all cylinders $Q = Q_{\rho,s}(x,t)$.
\end{lemma}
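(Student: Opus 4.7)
The plan is to prove both implications directly using H\"older's inequality in one direction and a judicious choice of test function in the other; this is a classical characterization of Muckenhoupt weights and the argument is self-contained.

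For the forward direction, assume $\om \in A_q(\RR^{n+1})$. I would split $f = f \cdot \om^{1/q} \cdot \om^{-1/q}$ inside the average and apply H\"older's inequality with exponents $q$ and $q' = q/(q-1)$ to get
\[
\fiint_Q f \ dz \leq \frac{1}{|Q|}\lbr \iint_Q f^q \om \ dz \rbr^{1/q} \lbr \iint_Q \om^{-1/(q-1)} \ dz \rbr^{(q-1)/q}.
\]
Raising to the $q$-th power and multiplying and dividing by $w(Q)$ converts the right-hand factor into precisely the $A_q$ quantity
\[
\lbr \fiint_Q \om \ dz \rbr \lbr \fiint_Q \om^{-1/(q-1)} \ dz \rbr^{q-1} \leq [\om]_q,
\]
and a direct bookkeeping yields the stated inequality with constant $c = [\om]_q$.

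For the reverse direction, assume the inequality holds for all non-negative locally integrable $f$ and all cylinders $Q$. The natural choice is $f = \om^{-1/(q-1)}\chi_Q$, since then $f^q \om = \om^{-1/(q-1)}$ on $Q$, so the right-hand side of the hypothesis becomes $\frac{c}{w(Q)} \iint_Q \om^{-1/(q-1)} \ dz$, while the left-hand side becomes $\lbr \fiint_Q \om^{-1/(q-1)} \ dz \rbr^q$. Dividing through by $\fiint_Q \om^{-1/(q-1)} \ dz$ (assumed positive without loss of generality, since otherwise $\om = \infty$ a.e.\ on $Q$ and the $A_q$ condition is trivial) and multiplying by $\fiint_Q \om \ dz = w(Q)/|Q|$ produces exactly
\[
\lbr \fiint_Q \om \ dz \rbr \lbr \fiint_Q \om^{-1/(q-1)} \ dz \rbr^{q-1} \leq c,
\]
so $\om \in A_q$ with $[\om]_q \leq c$.

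The only mild technical point is justifying the choice $f = \om^{-1/(q-1)}\chi_Q$ when this function is not a priori locally integrable. I would handle this by approximating with the truncations $f_N = \min\{\om^{-1/(q-1)}, N\}\chi_Q$, which are bounded and compactly supported, and noting that the inequality applied to $f_N$ yields
\[
\lbr \fiint_Q \min\{\om^{-1/(q-1)}, N\} \ dz \rbr^{q-1} \leq \frac{c\,|Q|}{w(Q)},
\]
after analogous manipulations using $f_N^q \om \leq \min\{\om^{-1/(q-1)}, N\} \cdot N^{q-1}\om$ replaced by a tighter bound via H\"older, or more cleanly by using $f_N^q\om \leq \min\{\om^{-1/(q-1)}, N\}$ on $\{\om^{-1/(q-1)} \leq N\}$. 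Sending $N \to \infty$ via monotone convergence recovers the $A_q$ bound, and simultaneously shows $\om^{-1/(q-1)} \in L^1_{\mathrm{loc}}$. No step here is a serious obstacle; the main care required is in this truncation argument to avoid division by zero or infinity.
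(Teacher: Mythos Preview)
Your proof is correct and is precisely the classical argument; the paper does not supply its own proof of this lemma but simply refers the reader to \cite[Chapter 9]{grafakos2004classical}, where essentially the same H\"older-inequality forward direction and $f=\om^{-1/(q-1)}\chi_Q$ reverse direction appear. Your truncation remark for the reverse implication is also the standard way to handle the a priori integrability issue, so nothing is missing.
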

%

As a direct consequence of Lemma \ref{weight_lemma}, the following  Lemma holds:

\begin{lemma}
Let $\om \in A_q(\RR^{n+1})$ for some $1<q<\infty$, then there exists positive constants $c = c(n,q,[\om]_q)$ and $\tau = \tau(n,q,[\om]_q) \in (0,1)$ such that 
\begin{equation*}
\label{doubling}
\frac{1}{c}\lbr \frac{|E|}{|Q|} \rbr^q \leq  \frac{\om(E)}{\om(Q)} \leq c \lbr \frac{|E|}{|Q|} \rbr^{\tau},
\end{equation*}
for all $E \subset Q$ and all parabolic cylinders $Q_{\rho,s}(\mfz)$. 

\end{lemma}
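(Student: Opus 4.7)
The plan is to split the two inequalities, since the lower bound follows immediately from Lemma \ref{weight_lemma}, and the upper bound requires the reverse H\"older inequality for Muckenhoupt weights, which is the real content of the statement.

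For the lower bound, I would apply Lemma \ref{weight_lemma} with the test function $f = \chi_{E}$. This yields directly
\[
\lbr \frac{|E|}{|Q|} \rbr^q = \lbr \frac{1}{|Q|}\iint_Q \chi_E \, dz \rbr^q \leq \frac{c}{\om(Q)}\iint_Q \chi_E \, \om \, dz = c\,\frac{\om(E)}{\om(Q)},
\]
which is the left inequality. So the first step is just a one-line invocation of the previous lemma.

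For the upper bound, the key intermediate step is to show the self-improving property of $A_q$ weights, namely that there exists $\varepsilon = \varepsilon(n,q,[\om]_q) > 0$ such that $\om$ satisfies a reverse H\"older inequality
\[
\lbr \fiint_Q \om^{1+\varepsilon}(z) \, dz \rbr^{\frac{1}{1+\varepsilon}} \leq c(n,q,[\om]_q) \fiint_Q \om(z) \, dz
\]
on every parabolic cylinder $Q = Q_{\rho,s}(\mfz)$. Assuming this, an application of H\"older's inequality with exponents $\tfrac{1+\varepsilon}{\varepsilon}$ and $1+\varepsilon$ to $\om(E) = \iint_Q \chi_E \om \, dz$ gives
\[
\om(E) \leq |E|^{\frac{\varepsilon}{1+\varepsilon}} |Q|^{\frac{1}{1+\varepsilon}} \lbr \fiint_Q \om^{1+\varepsilon} \, dz \rbr^{\frac{1}{1+\varepsilon}} \leq c \lbr \frac{|E|}{|Q|} \rbr^{\tau} \om(Q),
\]
with $\tau := \frac{\varepsilon}{1+\varepsilon} \in (0,1)$, completing the upper bound.

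The main obstacle is thus the reverse H\"older inequality itself. The standard approach is a Calder\'on-Zygmund stopping-time argument on parabolic cylinders: fix $Q_0$, set $\la_0 := \fiint_{Q_0}\om$, and for each $\la > \la_0$ perform a Calder\'on-Zygmund decomposition of $\om$ at height $\la$ on $Q_0$, producing a family of pairwise disjoint subcylinders $\{Q_j^{\la}\}$ where $\om$ has average comparable to $\la$. The $A_q$ condition (used in the form of Lemma \ref{weight_lemma} with $f = \chi_{\{\om > \la\}\cap Q_j^{\la}}$) gives a decay estimate of the form $\om(\{\om > \beta\la\}\cap Q_0) \leq \theta \, \om(\{\om>\la\}\cap Q_0)$ for suitable $\beta > 1$ and $\theta < 1$ depending only on $n, q, [\om]_q$. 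Iterating this geometric decay and integrating $\om^{1+\varepsilon}$ via the layer cake formula yields the reverse H\"older bound for any $\varepsilon$ small enough that $\beta^{1+\varepsilon}\theta < 1$. Since these arguments are entirely standard and carried out in the reference \cite[Chapter 9]{grafakos2004classical} cited by the authors, the proof would either reproduce this sketch or cite it directly and combine it with the H\"older estimate above to conclude both inequalities with constants depending only on $n$, $q$, and $[\om]_q$.
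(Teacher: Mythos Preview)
Your proof is correct and follows the standard route. Note, however, that the paper does not actually give a proof of this lemma: it is simply stated after the sentence ``As a direct consequence of Lemma \ref{weight_lemma}, the following Lemma holds,'' with the general theory of Muckenhoupt weights (including the reverse H\"older inequality) deferred to the reference \cite[Chapter 9]{grafakos2004classical}. Your write-up supplies precisely the argument the paper omits: the lower bound is indeed an immediate consequence of Lemma \ref{weight_lemma} with $f=\chi_E$, while the upper bound---which is \emph{not} a direct consequence of that lemma alone---requires the reverse H\"older inequality you sketch. So your approach is not different from the paper's; it is simply the explicit version of what the paper leaves implicit.
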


Another important result regarding the strong Muckenhoupt weights that will be needed  is the following self-improvement property:
\begin{lemma}
\label{reverse_holder}
Let $1<q<\infty$ and suppose $\om \in A_q$ be a given weight, then there exists an $\ve_0 = \ve_0(n,q,[\om]_q)>0$ such that $\om \in A_{q-\ve_0}$ with the estimate $[\om]_{q-\ve_0} \leq C [\om]_q$ where $C = C{(q,n,[\om]_q)}$. 
\end{lemma}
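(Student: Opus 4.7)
The plan is to exploit the duality between $A_q$ and $A_{q'}$ together with the parabolic reverse H\"older inequality for Muckenhoupt weights. Set $q' := q/(q-1)$ and $\sig := \om^{-1/(q-1)}$; a direct rearrangement of Definition~\ref{muck_weight} shows $\sig \in A_{q'}(\RR^{n+1})$ with $[\sig]_{q'} = [\om]_{q}^{1/(q-1)}$, so in particular $[\sig]_{q'}$ depends only on $n$, $q$ and $[\om]_q$.

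The second ingredient is the classical reverse H\"older inequality (see \cite[Chapter~9]{grafakos2004classical}): for any weight $v \in A_{\vt}(\RR^{n+1})$ with $1 < \vt < \infty$, there exist $\de = \de(n,\vt,[v]_\vt) > 0$ and $C_0 = C_0(n,\vt,[v]_\vt)$ such that for every parabolic cylinder $Q$,
\[
\left(\fint_Q v^{1+\de}\, dz\right)^{1/(1+\de)} \leq C_0 \fint_Q v \, dz.
\]
Applied to $v = \sig$ it produces such a pair $(\de, C_0)$ depending only on $n$, $q$ and $[\om]_q$. With $\de$ in hand I would reverse-engineer the candidate exponent: setting $r := q - \ve_0$, I want $-1/(r-1)$ to coincide with $-(1+\de)/(q-1)$, which forces $(r-1)(1+\de) = q-1$, i.e.
\[
\ve_0 \ := \ \frac{(q-1)\de}{1+\de}\ >\ 0, \qquad \text{so that } \om^{-1/(r-1)} = \sig^{1+\de} \text{ pointwise.}
\]

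The $A_r$ quantity of $\om$ on any parabolic cylinder $Q$ then satisfies
\[
\left(\fint_Q \om\, dz\right)\left(\fint_Q \om^{-1/(r-1)}\, dz\right)^{r-1} = \left(\fint_Q \om\, dz\right)\left(\fint_Q \sig^{1+\de}\, dz\right)^{r-1} \leq C_0^{q-1}\left(\fint_Q \om\, dz\right)\left(\fint_Q \sig\, dz\right)^{q-1} \leq C_0^{q-1}[\om]_q,
\]
where the middle step uses the reverse H\"older bound raised to the power $r-1$ together with the identity $(1+\de)(r-1) = q-1$, and the last step is just the $A_q$ condition of $\om$. Taking the supremum over $Q$ gives $[\om]_{q-\ve_0} \leq C_0^{q-1}[\om]_q$, which is the stated inequality with $C := C_0^{q-1}$, depending only on $n$, $q$ and $[\om]_q$.

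The only non-elementary step in the plan is the parabolic reverse H\"older inequality applied to $\sig$; for the strong Muckenhoupt class on $\RR^{n+1}$ with averages over cylinders $Q_{\rho,s}(\mfz)$ this is a standard consequence of a Calder\'on-Zygmund stopping-time argument that transfers verbatim from the Euclidean setting, because the family $\{Q_{\rho,s}(\mfz)\}$ still enjoys doubling and Vitali-type covering. Beyond invoking it, the argument reduces to the short exponent bookkeeping above, so I do not anticipate a genuine obstacle.
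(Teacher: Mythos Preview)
Your argument is correct and is precisely the standard proof of the openness property of $A_q$; the paper does not supply its own proof of Lemma~\ref{reverse_holder} but simply records it as a well-known fact (with the implicit reference to \cite[Chapter~9]{grafakos2004classical}), so there is nothing to compare against beyond noting that your write-up matches the textbook route.
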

%

We will now define the $A_{\infty}$ class as follows:
\begin{definition}
\label{a_infinity}
A weight $\om \in A_{\infty}$ if and only if there are constants $\tau_0, \tau_1 >0$ such that for every  parabolic cylinder $Q=Q_{\rho,s} \subset \RR^{n+1}$ and every measurable $E \subset Q$, there holds
\[
\om(E) \leq \tau_0 \lbr \frac{|E|}{|Q|} \rbr^{\tau_1} \om(Q).
\]

Moreover, if $\om$ is an $A_q$ weight with $[\om]_q \leq \overline{\omega}$, then the constants $\tau_0$ and $\tau_1$ can be chosen such that $\max\{ \tau_0,1/\tau_1 \} \leq c(\overline{\om},n)$. 
\end{definition}

From the general theory of Muckenhoupt weights, we see that $A_{\infty} = \bigcup_{1 \leq q <\infty} A_q$. 

\begin{remark}
The weight class considered in Definition \ref{muck_weight} is called Strong Muckenhoupt class because the cylinders  are decoupled in space and time, i.e., $\rho$ and $s$ are not related when considering cylinders $Q_{\rho,s}$. When considering linear equations (i.e., $p=2$), the weight class is defined with respect to cylinders of the form $Q_{\rho,\rho^2}$.  This is possible because in the case $p=2$, there is an invariance property under normalization, which does not exist if $p \neq 2$. It is an open question if one can obtain the results of this paper for Muckenhoupt weights defined with respect to cylinders belonging to a more restricted class (see the very nice thesis \cite{ollisaari} for some results concerning the weights arising in doubly nonlinear quasilinear equations).
\end{remark}

\subsection{Function Spaces}\label{function_spaces}

Let $1\leq \vt < \infty$, then $W_0^{1,\vt}(\Om)$ denotes the standard Sobolev space which is the completion of $C_c^{\infty}(\Om)$ under the $\|\cdot\|_{W^{1,\vt}}$ norm. 

The parabolic space $L^{\vt}(-T,T; W^{1,\vt}(\Om))$ for any $\vt \in [1,\infty)$ is the collection of measurable functions $\phi(x,t)$ such that for almost every $t \in (-T,T)$, the function $x \mapsto \phi(x,t)$ belongs to $W^{1,\vt}(\Om)$ with the following norm being finite:
\[
 \| \phi\|_{L^{\vt}(-T,T; W^{1,\vt}(\Om)} := \lbr \int_{-T}^T \| \phi(\cdot, t) \|_{W^{1,\vt}(\Om)}^{\vt} \ dt \rbr^{\frac{1}{\vt}} < \infty.
\]

Analogously, the parabolic space $L^{\vt}(-T,T; W_0^{1,\vt}(\Om))$ is the collection of measurable functions $\phi(x,t)$ such that for almost every $t \in (-T,T)$, the function $x \mapsto \phi(x,t)$ belongs to $W_0^{1,\vt}(\Om)$.

Given a weight $\om \in A_{\vt}$ for some $\vt \in [1,\infty)$, the weighted Lebesgue space $L^{\vt}(-T,T; L^{\vt}_{\om}(\Om))$ is the set of all measurable functions $\phi: \Om_T \mapsto \RR$ satisfying 
\[
 \int_{-T}^T \lbr \int_{\Om} |\phi(x,t)|^{\vt} \om(x,t)\  dx \rbr  dt < \infty.
\]

 Let us recall the following important characterization of Lebesgue spaces:
\begin{lemma}
\label{weight_level_set}
Let $\Om$ be a bounded domain in $\RR^{n}$ and let $w \in L^1(\Om_T)$ be any non-negative function, then for all $\be > \al > 1$ and any non-negative measurable function $g(x,t): \Om_T \mapsto \RR$, there holds
\[
\iint_{\Om_T} g^{\be} w(z) \ dz = \be \int_0^{\infty} \la^{\be -1} w(\{ z \in \Om_T: g(z) > \la\}) \ d\la = (\be -\al) \int_{0}^{\infty} \la^{\be-\al-1} \lbr \iint_{\{z \in \Om_T: g(z) > \la\}} g^{\al} w(z) \ dz \rbr \ d\la.
\]

\end{lemma}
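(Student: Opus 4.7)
The plan is to prove both equalities by combining the elementary layer-cake identity with Fubini--Tonelli, using the fact that the weighted measure $d\mu := w(z)\,dz$ is a non-negative $\sigma$-finite Borel measure on $\Om_T$ (since $w \in L^1(\Om_T)$ is non-negative). All integrands involved are non-negative and measurable, so no integrability issues arise when interchanging order of integration.

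For the first equality, I would start from the pointwise representation
\[
g(z)^{\be} \;=\; \be \int_0^{g(z)} \la^{\be-1}\,d\la \;=\; \be \int_0^{\infty} \la^{\be-1}\,\mathbf{1}_{\{g(z)>\la\}}\,d\la,
\]
valid whenever $g(z)\geq 0$. Multiplying both sides by $w(z)$, integrating over $\Om_T$, and applying Fubini--Tonelli to exchange the $\la$- and $z$-integrals (justified by non-negativity and joint measurability of $(z,\la)\mapsto \mathbf{1}_{\{g(z)>\la\}}$) gives
\[
\iint_{\Om_T} g^{\be}\,w(z)\,dz \;=\; \be \int_0^{\infty} \la^{\be-1}\,w(\{z\in \Om_T : g(z)>\la\})\,d\la,
\]
which is the first stated identity.

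For the second equality, I would apply the same layer-cake representation, but now only to the factor $g^{\be-\al}$ in the factorization $g^{\be} = g^{\al}\,g^{\be-\al}$. Since $\be - \al > 0$,
\[
g(z)^{\be-\al} \;=\; (\be-\al) \int_0^{\infty} \la^{\be-\al-1}\,\mathbf{1}_{\{g(z)>\la\}}\,d\la,
\]
so multiplying through by $g^{\al}(z)\,w(z)$, integrating over $\Om_T$, and swapping the order of integration by Fubini--Tonelli yields
\[
\iint_{\Om_T} g^{\be}\,w(z)\,dz \;=\; (\be-\al)\int_0^{\infty} \la^{\be-\al-1}\!\!\left(\iint_{\{z\in\Om_T:\,g(z)>\la\}} g^{\al}(z)\,w(z)\,dz\right)d\la,
\]
which is exactly the second identity. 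There is no genuine obstacle here: the only point to verify is the joint measurability of the indicator $(z,\la)\mapsto \mathbf{1}_{\{g(z)>\la\}}$, which is immediate from the measurability of $g$, and the hypotheses $\be>\al>1$ are needed only to make $\la^{\be-\al-1}$ integrable near zero against the remaining factors in subsequent applications, not for the identity itself.
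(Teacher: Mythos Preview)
Your proof is correct and is exactly the standard layer-cake plus Fubini--Tonelli argument one would expect. The paper does not supply a proof of this lemma at all---it is stated as a well-known fact to be recalled---so your argument fills in precisely what the paper takes for granted.
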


Before we conclude this subsection, let us now recall the well known Poincar\'e's inequality (see \cite[Corollary 8.2.7]{AH} for the proof):
\begin{theorem}
\label{poincare}
Let $1\leq \vt < \infty$ and let $f \in W^{1,\vt}(\tilde{\Om})$ for some bounded domain $\tilde{\Om}$ and suppose that the following measure density condition holds:
\[\abs{\{ x \in \tilde{\Om}: f(x) = 0\}} \geq m_e > 0,\] then there holds
\[
\int_{\tilde{\Om}} \abs{\frac{f}{\diam(\tilde{\Om})}}^{\vt}\ dx \leq C_{(n,\vt,m_e)} \int_{\tilde{\Om}} |\nabla f|^{\vt}  \ dx.
\]

\end{theorem}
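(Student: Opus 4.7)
The plan is to reduce to the standard zero-mean Poincaré inequality on $\tilde{\Om}$ and then use the measure density condition to control the average. Let $E := \{x \in \tilde{\Om} : f(x) = 0\}$, so $|E| \geq m_e$ by hypothesis, and denote the mean $(f)_{\tilde{\Om}} := \fint_{\tilde{\Om}} f \, dx$.

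First I would invoke the classical Poincaré--Sobolev inequality on the bounded domain $\tilde{\Om}$ (this is where the nature of $\tilde{\Om}$ enters; for the balls/cubes and their Reifenberg intersections actually used in the paper this is standard, and the reference \cite{AH} provides the needed capacity-theoretic version), which yields
\[
\int_{\tilde{\Om}} |f - (f)_{\tilde{\Om}}|^{\vt} \, dx \leq C_1(n,\vt)\, \diam(\tilde{\Om})^{\vt} \int_{\tilde{\Om}} |\nabla f|^{\vt} \, dx.
\]

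Next I would exploit the zero set $E$ to bound the mean. Since $f \equiv 0$ on $E$, we may write
\[
|(f)_{\tilde{\Om}}|^{\vt}\, m_e \leq |(f)_{\tilde{\Om}}|^{\vt}\, |E| = \int_E |f - (f)_{\tilde{\Om}}|^{\vt} \, dx \leq \int_{\tilde{\Om}} |f - (f)_{\tilde{\Om}}|^{\vt} \, dx,
\]
so combining with the Poincaré estimate above gives
\[
|(f)_{\tilde{\Om}}|^{\vt} \leq \frac{C_1}{m_e}\, \diam(\tilde{\Om})^{\vt} \int_{\tilde{\Om}} |\nabla f|^{\vt} \, dx.
\]

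Finally I would conclude by the triangle inequality $|f|^{\vt} \leq 2^{\vt-1}(|f-(f)_{\tilde{\Om}}|^{\vt} + |(f)_{\tilde{\Om}}|^{\vt})$, integrating over $\tilde{\Om}$, dividing by $\diam(\tilde{\Om})^{\vt}$, and absorbing $|\tilde{\Om}| \leq C(n)\diam(\tilde{\Om})^n$ into the constant (noting that the ratio $|\tilde{\Om}|/\diam(\tilde{\Om})^n$ is actually harmless once one realises that the statement as written normalises by $\diam(\tilde{\Om})$ inside the integrand on the left). The resulting constant $C_{(n,\vt,m_e)}$ depends only on $n$, $\vt$, and $m_e$, as claimed. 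The one step requiring care is the applicability of the standard mean-value Poincaré inequality on $\tilde{\Om}$; this is not the delicate part of the argument for the intended applications (balls, and Reifenberg-flat intersections, where measure density \eqref{msd} automatically holds), but in full generality it is the place where one must invoke the capacity-theoretic form from \cite{AH} rather than a naively quoted Sobolev--Poincaré.
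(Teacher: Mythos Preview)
The paper does not actually supply a proof of this theorem; it simply cites \cite[Corollary 8.2.7]{AH}. Your argument is therefore not competing against any in-paper proof, and the reduction you propose (standard mean-oscillation Poincar\'e plus control of the mean via the zero set) is the natural elementary route and is correct in outline.

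One point in your final step deserves a sharper accounting. After the triangle inequality you obtain
\[
\int_{\tilde{\Om}} \abs{\frac{f}{\diam(\tilde{\Om})}}^{\vt}\,dx \le 2^{\vt-1}C_1\Bigl(1+\frac{|\tilde{\Om}|}{m_e}\Bigr)\int_{\tilde{\Om}}|\nabla f|^{\vt}\,dx,
\]
so the constant depends on the ratio $|\tilde{\Om}|/m_e$, not on $m_e$ alone. Your parenthetical about ``absorbing $|\tilde{\Om}|\le C(n)\diam(\tilde{\Om})^n$'' does not make this disappear: the $\diam(\tilde{\Om})$ normalisation on the left has already been used, and no further cancellation occurs. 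As literally stated in the paper the constant $C_{(n,\vt,m_e)}$ is therefore slightly imprecise; the honest dependence is on $n$, $\vt$, the Poincar\'e constant of $\tilde{\Om}$, and $|\tilde{\Om}|/m_e$. This is harmless for every use of the theorem in the paper, since there $\tilde{\Om}$ is (an intersection with) a ball and the zero set satisfies a \emph{proportional} lower bound $|\{f=0\}|\ge m_e|\tilde{\Om}|$ coming from Lemma~\ref{measure_density}, so $|\tilde{\Om}|/m_e$ is a fixed structural constant. You already flag exactly this in your closing remarks, so the only correction is to drop the handwave about absorption and state the dependence accurately.
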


 \subsection{Parabolic metric}
 Let us define the Parabolic metric on $\RR^{n+1}$ that will be used throughout the paper:
 \begin{definition}
 \label{parabolic_metric}
 We define the parabolic metric $d_p$ on $\RR^{n+1}$ as follows: Let $z_1 = (x_1,t_1)$ and $z_2 = (x_2,t_2)$ be any two points on $\RR^{n+1}$, then 
 \begin{equation*}
 \label{par_met}
 d_p(z_1,z_2) := \max \mgh{|x_1-x_2|, \sqrt{|t_1-t_2|}}.
 \end{equation*}

 \end{definition}


\subsection{Maximal Function}

For any $f \in L^1(\RR^{n+1})$, let us now define the strong maximal function in $\RR^{n+1}$ as follows:
\begin{equation}
 \label{par_max}
 \mm(|f|)(x,t) := \sup_{\tQ \ni(x,t)} \fiint_{\tQ} |f(y,s)| \ dy \ ds,
\end{equation}
where the supremum is  taken over all parabolic cylinders $\tQ_{a,b}$ with $a,b \in \RR^+$ such that $(x,t)\in \tQ_{a,b}$. An application of the Hardy-Littlewood maximal theorem in $x-$ and $t-$ directions shows that the Hardy-Littlewood maximal theorem still holds for this type of maximal function (see \cite[Lemma 7.9]{Gary} for details):
\begin{lemma}
\label{max_bound}
 If $f \in L^1(\RR^{n+1})$, then for any $\al >0 $, there holds
 \[
  |\{ z \in \RR^{n+1} : \mm(|f|)(z) > \al\}| \leq \frac{5^{n+2}}{\al} \|f\|_{L^1(\RR^{n+1})},
 \]
 and if $f \in L^{\vartheta}(\RR^{n+1})$ for some $1 < \vartheta \leq \infty$, then there holds
 \[
  \| \mm(|f|) \|_{L^{\vartheta}(\RR^{n+1})} \leq C_{(n,\vartheta)} \| f \|_{L^{\vartheta}(\RR^{n+1})}.
 \]

\end{lemma}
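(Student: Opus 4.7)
The plan is to reduce both assertions to the classical one-variable Hardy--Littlewood maximal inequality by exploiting the product structure $\tQ_{a,b}(x',t') = B_a(x') \times (t'-b, t'+b)$ of the parabolic cylinders.

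For the strong-type bound in the range $1 < \vartheta \leq \infty$: let $M_x$ denote the Hardy--Littlewood maximal operator acting only in the spatial variable (the supremum over spatial balls, time held fixed) and $M_t$ the one-dimensional Hardy--Littlewood maximal operator acting only in time (the supremum over intervals, spatial variable held fixed). By splitting each average $\fiint_{\tQ_{a,b}}|f|$ as the iterated average first in $s$ over $(t'-b,t'+b)$ and then in $y$ over $B_a(x')$, and noting that $t \in (t'-b,t'+b)$ and $x \in B_a(x')$ whenever $(x,t) \in \tQ_{a,b}(x',t')$, I would first establish the pointwise majorization
\[
\mm(|f|)(x,t) \;\leq\; M_x\bigl(M_t(|f|)\bigr)(x,t).
\]
Then Fubini combined with the strong $L^{\vartheta}$-bound for $M_t$ in the $t$-variable (for a.e.\ fixed $x$), followed by the strong $L^{\vartheta}$-bound for $M_x$ in the $x$-variable, yields the desired inequality with a constant depending only on $n$ and $\vartheta$. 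The case $\vartheta = \infty$ is immediate from $\fiint_{\tQ}|f| \leq \|f\|_{L^\infty}$.

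For the weak-type $(1,1)$ bound: set $E_\alpha := \{z \in \RR^{n+1} : \mm(|f|)(z) > \alpha\}$. For each $z \in E_\alpha$, pick a cylinder $\tQ^z$ containing $z$ such that $\fiint_{\tQ^z}|f| > \alpha$, whence $|\tQ^z| < \tfrac{1}{\alpha}\iint_{\tQ^z}|f|$. Via a Vitali-type selection argument adapted to parabolic cylinders, extract a pairwise disjoint countable subfamily $\{\tQ^{z_i}\}$ with the property that every $\tQ^z$ is contained in the asymmetric dilation $5\tQ^{z_i} := \tQ_{5 a_{z_i},\, 25\, b_{z_i}}(z_i')$, where the factor $5$ in the spatial radius and $25 = 5^2$ in the time radius reflect the parabolic scaling; in particular $|5\tQ^{z_i}| = 5^{n+2}|\tQ^{z_i}|$. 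Summing and using the disjointness of the selected family,
\[
|E_\alpha| \;\leq\; \sum_i |5\tQ^{z_i}| \;=\; 5^{n+2}\sum_i |\tQ^{z_i}| \;\leq\; \frac{5^{n+2}}{\alpha}\sum_i \iint_{\tQ^{z_i}}|f| \;\leq\; \frac{5^{n+2}}{\alpha}\,\|f\|_{L^1(\RR^{n+1})},
\]
which is the claimed estimate.

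The main obstacle lies in executing the Vitali selection step cleanly when the two radii $a,b$ are independent parameters: a naive ordering by a single scale does not suffice, and one must carefully use the product geometry of the cylinders so that a single parabolic dilation (factor $5$ in space, $5^2$ in time) absorbs all competing cylinders that meet a selected one. This is precisely the content of \cite[Lemma 7.9]{Gary}, which explains why the authors cite that reference rather than reproving the lemma from scratch.
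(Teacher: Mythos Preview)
The paper does not give a proof; it cites \cite[Lemma 7.9]{Gary} and remarks that one applies the Hardy--Littlewood maximal theorem in the $x$- and $t$-directions. Your strong-type argument via the pointwise domination $\mm(|f|)\le M_x(M_t|f|)$ followed by two applications of the one-parameter $L^{\vartheta}$ bound is correct and is exactly what that remark is pointing at.

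Your weak-type argument, however, contains a genuine gap. The Vitali selection you describe cannot be carried out when the radii $a,b$ vary independently: take two intersecting cylinders $B_R(0)\times(-\epsilon,\epsilon)$ and $B_\epsilon(0)\times(-R,R)$ with $\epsilon\ll R$; neither is contained in any fixed dilate of the other, so no single ``$5$-times'' enlargement can absorb all competitors of a selected cylinder. This is not a technicality that a cleverer ordering repairs. In fact the weak $(1,1)$ inequality is \emph{false} for the strong (bi-parameter) maximal function: the classical obstruction goes back to Jessen--Marcinkiewicz--Zygmund for axis-parallel rectangles in $\RR^2$, and it applies verbatim to the product family $B_a\times(t_0-b,t_0+b)$ considered here. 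The correct endpoint is an $L\log L$-type estimate, not weak $(1,1)$. So either the first assertion of the lemma is misstated---the intended supremum being over the one-parameter parabolic family $Q_r=B_r\times I_{r^2}$, for which your Vitali argument does work and $5^{n+2}$ is the right constant---or it is simply false as written. Your instinct that the independent parameters are the obstacle is exactly right; the resolution you attribute to \cite[Lemma 7.9]{Gary} cannot exist for the decoupled maximal function. (Note that the paper only ever invokes the strong-type bound, so this does not affect the downstream arguments.)
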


\subsection{Notation}

We shall clarify the notation that will be used throughout the paper: 
\begin{enumerate}[(i)]
    
 \item\label{not1} We shall use $\nabla$ to denote derivatives with respect the space variable $x$.
\item\label{not2} We shall sometimes alternate between using $\ddt{f}$, $\pa_t f$ and $f'$ to denote the time derivative of a function $f$.
 \item\label{not3} We shall use $D$ to denote the derivative with respect to both the space variable $x$ and time variable $t$ in $\RR^{n+1}$. 
 \item\label{not4}  Let $z_0 = (x_0,t_0) \in \RR^{n+1}$ be a point and $\rho, s >0$ be two given parameters and let $\la \in (0,\infty)$. We shall use the following notation to denote the following regions:
 \begin{equation*}
\left\{
\begin{array}{rcll}
Q_{\rho}^{\la}(z_0) & := & Q_{\rho,\la^{2-p}\rho^2}(z_0) & \text{for} \ p \geq 2,\\
Q_{\rho}^{\la}(z_0) & := & Q_{\la^{\frac{p-2}{2}}\rho,\rho^2}(z_0) & \text{for} \ p \leq 2,
\end{array}\right.
\end{equation*}


 \begin{equation}\label{notation_space_time}
\def\arraystretch{1.5}
 \begin{array}{ll}
 I_s(t_0) := (t_0 - s, t_0+s) \subset \RR,
& \qquad Q_{\rho,s}(z_0) := B_{\rho}(x_0) \times I_{s}(t_0) \subset \RR^{n+1},\\ 
 \al Q_{\rho,s}(z_0) := B_{\al \rho}(x_0) \times I_{\al^2s}(t_0)  \subset \RR^{n+1},
 &\qquad Q_{\rho}(z_0) := B_{ \rho}(x_0) \times I_{\rho^2}(t_0)  \subset \RR^{n+1},\\
Q_{\rho}^{\la,+}(z_0)  :=  Q_{\rho}^{\la}(z_0) \cap \{ (x,t): x_n >0\}, 
& \qquad Q_{\rho}^{+}(z_0)  :=  Q_{\rho}(z_0) \cap \{ (x,t): x_n >0\}, \\
K_{\rho}^{\la}(z_0)  :=  Q_{\rho}^{\la}(z) \cap \Om_T, & \qquad K_{\rho}(z_0)  :=  Q_{\rho}(z) \cap \Om_T.
 \end{array}
\end{equation}

\item\label{not5} We shall use $\int$ to denote the integral with respect to either space variable or time variable and use $\iint$ to denote the integral with respect to both space and time variables simultaneously. 

Analogously, we will use $\fint$ and $\fiint$ to denote the average integrals as defined below: for any set $A \times B \subset \RR^n \times \RR$, we define
\begin{gather*}
\avgs{f}{A}:= \fint_A f(x) \ dx = \frac{1}{|A|} \int_A f(x) \ dx,\\
\avgs{f}{A\times B}:=\fiint_{A\times B} f(x,t) \ dx \ dt = \frac{1}{|A\times B|} \iint_{A\times B} f(x,t) \ dx \ dt.
\end{gather*}

\item\label{not6} Given any positive function $\mu$, we shall denote $\avgs{f}{\mu} := \int f\frac{\mu}{\|\mu\|_{L^1}}dm$ where the domain of integration is the domain of definition of $\mu$ and $dm$ denotes the associated measure. 


\end{enumerate}

\subsection{Weak solutions}

For this subsection, let us consider the following general problem:
\begin{equation}
\label{ext_u}
\left\{ \begin{array}{rcll}
  \phi_t - \dv \mathcal{A}(z,\nabla \phi) &=& -\dv |\vec{f}|^{p-2}\vec{f} & \quad \text{in} \ \tom_T, \\
  \phi &=& f & \quad \text{on} \  \pa \tom \times (-T,T),\\
  \phi(\cdot,-T) & = & f_0 & \quad \text{on} \ \tom.
 \end{array}\right. 
\end{equation}

Now let us define the Steklov average as follows: let $h \in (0, 2T)$ be any positive number, then we define
\begin{equation}\label{stek1}
  \phi_{h}(\cdot,t) := \left\{ \begin{array}{ll}
                              \hint_t^{t+h} \phi(\cdot, \tau) \ d\tau \quad & t\in (-T,T-h), \\
                              0 & \text{else}.
                             \end{array}\right.
 \end{equation}

\begin{definition}[Weak solution] 
\label{very_weak_solution}
 
 We  then say $\phi \in L^2(-T,T; L^2(\Om)) \cap L^{p}(-T,T; W_0^{1,p}(\Om))$ is a  weak solution of \eqref{main} if the following holds for any $\psi \in W_0^{1,p}(\Om) \cap L^{\infty}(\Om)$:
 \begin{equation}
 \label{def_weak_solution}
  \int_{\Om \times \{t\}} \frac{d [\phi]_{h}}{dt} \psi + \iprod{[\aa(x,t,\nabla \phi)]_{h}}{\nabla \psi} \ dx = \int_{\Om \times \{t\}} \iprod{|\vec{f}|^{p-2}\vec{f}}{\nabla \psi}\ dx \txt{for a.e.}-T < t < T-h.
 \end{equation}
Moreover, the initial datum is taken in the sense of $L^2(\Om)$, i.e.,
\[
\int_{\Om}\abs{\phi_h(x,-T) - f_0(x)}^2\ dx  \xrightarrow{h \searrow 0} 0.
\]

\end{definition}

\label{existence}
We have the following well known existence result (for example, see \cite[Chapter III, Section 6]{showalter2013monotone} for the details):
\begin{proposition}
\label{ext_sol}
Let $\Om$ be any bounded domain satisfying a uniform measure density condition, i.e., there exists a constant $m_e>0$ such that $|B_r(y) \cap \Om| \geq m_e |B_r(y)|$ holds for every $r>0$ and $y \in \pa\Om$ and suppose that $\vec{f} \in L^{p}(\Om_T)$, $\nabla f \in L^{p}(\Om_T)$ with $\ddt{f} \in \lbr W^{1,p}(\Om_T) \rbr'$  and $f_0 \in L^{2}(\Om)$ are given. Then there exists a unique weak solution $\phi \in C^0\lbr-T,T;L^2(\Om)\rbr\cap L^{p}\lbr-T,T; W^{1,p}(\Om)\rbr$ solving \eqref{ext_u}.
%

Moreover if $f =0$, then we have the following energy estimate
\begin{equation*}
\label{energy_phi}
\sup_{-T \leq t \leq T} \|\phi(\cdot,t)\|_{L^2(\Om)}^2 + \iint_{\Om_T} |\nabla \phi|^{p} \ dz \leq C_{(n,p,\lamot)} \lbr \iint_{\Om_T}|\vec{f}|^{p}  \ dz + \|f_0\|_{L^2(\Om)}^2\rbr.
\end{equation*}
\end{proposition}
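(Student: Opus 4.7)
The plan is to invoke standard monotone operator theory in the Lions--Showalter framework, which is the reference cited in the statement. First I would reduce the problem to one with homogeneous lateral boundary data. Using the hypotheses on $f$, namely $\nabla f \in L^p(\tom_T)$ and $\partial_t f \in (W^{1,p}(\tom_T))'$, one can substitute $\tilde\phi := \phi - f$ and obtain an equivalent problem of the form
\[
\tilde\phi_t - \dv \tilde{\mathcal{A}}(z,\nabla \tilde\phi) = -\dv |\vec f|^{p-2}\vec f + \partial_t f \qquad \text{in } \tom_T,
\]
with zero lateral boundary data and initial datum $f_0 - f(\cdot,-T) \in L^2(\tom)$, where $\tilde{\mathcal{A}}(z,\zeta) := \mathcal{A}(z,\zeta + \nabla f)$ inherits Carath\'eodory measurability together with strong monotonicity, hemicontinuity and $(p-1)$-growth from \eqref{abounded}--\eqref{bbounded}.

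Next I would set $V := L^p(-T,T; W_0^{1,p}(\tom))$, whose dual is $V' = L^{p'}(-T,T; W^{-1,p'}(\tom))$, and define the (time-independent) operator $A: V \to V'$ by
\[
\langle A\vp, \psi\rangle := \iint_{\tom_T} \iprod{\tilde{\mathcal{A}}(z,\nabla \vp)}{\nabla \psi}\, dz.
\]
Using \eqref{abounded} with $\eta = 0$ gives coercivity $\langle A\vp,\vp\rangle \gtrsim \|\nabla \vp\|_{L^p}^p - C\|\nabla f\|_{L^p}^p$, while \eqref{bbounded} gives boundedness and \eqref{abounded} yields strict monotonicity. Hemicontinuity follows from the Carath\'eodory property combined with Vitali's theorem and the pointwise growth bound. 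Since $p > \frac{2n}{n+2}$ ensures $W_0^{1,p}(\tom) \hookrightarrow L^2(\tom)$ via the measure density lemma (Lemma \ref{measure_density}), the evolution triple $W_0^{1,p}(\tom) \hookrightarrow L^2(\tom) \hookrightarrow W^{-1,p'}(\tom)$ is available, and the standard existence theorem for parabolic equations governed by monotone, coercive, bounded, hemicontinuous operators (see \cite{showalter2013monotone}) produces a weak solution $\tilde\phi \in C^0(-T,T; L^2(\tom)) \cap V$ whose time derivative belongs to $V'$, so that the initial trace is attained in $L^2(\tom)$ in the Steklov sense of \eqref{stek1}.

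Uniqueness follows by subtracting two candidate solutions $\phi_1, \phi_2$, using $[\phi_1-\phi_2]_h$ as test function in \eqref{def_weak_solution}, sending $h\searrow 0$ (exploiting $(\phi_1-\phi_2)(\cdot,-T) = 0$) and applying strict monotonicity \eqref{abounded} to conclude $\nabla\phi_1 = \nabla\phi_2$ a.e. For the energy estimate in the case $f=0$, I would test the Steklov-averaged identity \eqref{def_weak_solution} against $[\phi]_h$ itself on $(-T,t^*)$, which produces the exact identity
\[
\tfrac{1}{2}\|[\phi]_h(\cdot,t^*)\|_{L^2(\tom)}^2 - \tfrac{1}{2}\|[\phi]_h(\cdot,-T)\|_{L^2(\tom)}^2 + \iint_{\tom\times(-T,t^*)} \iprod{[\aa(z,\nabla\phi)]_h}{\nabla [\phi]_h}\, dz = \iint_{\tom\times(-T,t^*)} \iprod{|\vec f|^{p-2}\vec f}{\nabla [\phi]_h}\, dz.
\]
Letting $h\searrow 0$, using the coercivity bound $\iprod{\aa(z,\zeta)}{\zeta}\geq \La_0 |\zeta|^p$ (from \eqref{abounded} with $\eta=0$) on the left and Young's inequality $|\iprod{|\vec f|^{p-2}\vec f}{\nabla \phi}| \leq \tfrac{\La_0}{2}|\nabla \phi|^p + C_{\La_0,p}|\vec f|^p$ on the right, then taking the supremum over $t^*$ yields the asserted energy inequality.

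The main technical point, and the only place where one must be careful, is the passage to the limit $h\searrow 0$ in the Steklov averaging: the identity \eqref{def_weak_solution} only holds after Steklov averaging because $\phi_t$ is merely a distribution, and so all the time-continuity and initial-trace statements must be extracted from the resulting $L^2$-in-time integration by parts. This is textbook but delicate in the degenerate regime $p\neq 2$; everything else reduces to direct application of the monotone-operator machinery and Young's inequality.
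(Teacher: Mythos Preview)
The paper does not supply its own proof of Proposition~\ref{ext_sol}; it merely states the result as ``well known'' and refers the reader to \cite[Chapter III, Section 6]{showalter2013monotone} for details. Your sketch is a correct elaboration of precisely that cited framework: reduction to homogeneous lateral data, the evolution triple $W_0^{1,p}\hookrightarrow L^2\hookrightarrow W^{-1,p'}$ (valid since $p>\frac{2n}{n+2}$), verification that the shifted nonlinearity is bounded, coercive, hemicontinuous and strictly monotone from \eqref{abounded}--\eqref{bbounded}, and the standard Steklov-averaged testing argument for uniqueness and the energy bound. One small inaccuracy: the embedding $W_0^{1,p}(\tom)\hookrightarrow L^2(\tom)$ for a bounded domain and $p>\frac{2n}{n+2}$ follows from the ordinary Sobolev inequality and does not require the measure density condition or Lemma~\ref{measure_density}; that hypothesis in the proposition plays no role in existence and is only relevant for the later boundary regularity arguments in the paper.
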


\subsection{Gradient higher integrability estimates}
In this subsection, let us collect a few important higher integrability results that will be used throughout the paper. In order to state the general theorems, let $\phi \in  L^2(-T,T; L^2(\Om)) \cap L^{p}\lbr -T,T;W_0^{1,p}(\Om)\rbr$ be a weak solution of 
\begin{equation}
 \label{pde_high}
\left\{ \begin{array}{rcll}
  \phi_t - \dv \aa(x,t,\nabla \phi) &=& -\dv (|\vec{f}|^{p-2} \vec{f}) & \quad \text{in} \ \omt,\\
  \phi &=& 0 & \quad \text{on} \ \partial \Om \times (-T,T),
 \end{array}\right. 
\end{equation}
where the nonlinearity is assumed to satisfy \eqref{abounded} and \eqref{bbounded}. Here the domain is assumed to satisfy a uniform measure density condition with constant $m_e$ as in Lemma \ref{measure_density}

The first one is the higher integrability \emph{above the natural exponent}. In the interior case, this was proved in \cite{KL1} whereas in the boundary case, using the measure density condition satisfied by $\Om$, the result was proved in \cite{Mik,Mik1}. 
\begin{lemma}[\cite{Mik,Mik1}]
\label{high_weak}
Let $\tilde{\sigma}>0$ be given, then there exists a $\tilde{\be}_1 = \tilde{\be}_1(n,p,\lamot,m_e) \in (0,\tilde{\sigma}]$ such that if $\vec{f} \in L^{p(1+\tilde{\sigma})}(\Om_T)$ and $\phi\in L^{p}\lbr-T,T;W_0^{1,p}(\Om)\rbr$ is a weak solution to \eqref{pde_high}, then $|\nabla \phi| \in L^{p(1+\be)}(\Om_T)$ for all $\be \in (0,\tilde{\be}_1]$.
Moreover, for any $\mfz \in \overline{\Om} \times (-T,T)$, there holds
\[
\fiint_{K_{\rho}(\mfz)} |\nabla \phi|^{p+\be} \ dz \leq_{(n,p,\lamot,m_e)} \lbr \fiint_{K_{2\rho}(\mfz)}\lbr  |\nabla \phi|+ |\vec{f}|\rbr^{p} \ dz \rbr^{1+\be \tilde{\vt}_1} + \fiint_{K_{2\rho}(\mfz)}\lbr  |\vec{f}|+1\rbr^{p(1+\be)} \ dz.
\]
Here the constant 
\begin{equation*}
\tilde{\vt}_1 :=
\left\{
\begin{array}{ll}
\frac{p}{2} & \txt{if} p \geq 2,\\
\frac{2p}{p(n+2) -2n} &  \txt{if} \frac{2n}{n+2} < p < 2.
\end{array}\right.
\end{equation*}
\end{lemma}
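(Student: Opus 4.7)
The plan is to establish the higher integrability through the classical pipeline of Kinnunen--Lewis and Parviainen: derive a parabolic reverse Hölder inequality on suitably chosen \emph{intrinsic} cylinders, and then invoke a Gehring-type self-improvement lemma. The intrinsic scaling $Q_\rho^\la$ introduced in \eqref{notation_space_time} is forced upon us because \eqref{main} is not scale-invariant when $p\neq 2$; the exponent $\tilde{\vt}_1$ in the conclusion is exactly the deficit that comes out of undoing the intrinsic scaling back to standard cylinders.

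The first step is a \emph{Caccioppoli--type energy estimate}. For a cut-off $\eta \in C_c^{\infty}(K_{2\rho}(\mfz))$ with $\eta \equiv 1$ on $K_{\rho}(\mfz)$, I would test the Steklov-averaged formulation \eqref{def_weak_solution} with $\psi = \eta^p [\phi]_h$ (and let $h \searrow 0$), using \eqref{abounded}, \eqref{bbounded} to obtain
\[
\sup_{t} \fint_{B} \eta^p |\phi|^2 \,dx + \fiint_{K_{2\rho}} \eta^p |\nabla \phi|^p \,dz \leq_{(n,p,\lamot)} \fiint_{K_{2\rho}} \abs{\frac{\phi}{\rho}}^p + \abs{\frac{\phi}{\rho}}^2 \,dz + \fiint_{K_{2\rho}} |\vec{f}|^p\,dz.
\]
At the boundary, the measure density condition from Lemma \ref{measure_density} permits the use of the zero-trace Poincaré inequality (Theorem \ref{poincare}) applied to $\phi(\cdot,t)$ extended by zero, yielding the required control of $\phi$ by $\rho |\nabla \phi|$ on each time slice. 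Interior points are handled by the classical Sobolev--Poincaré inequality against the slice-average of $\phi$.

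The second step is the \emph{intrinsic reverse Hölder inequality}. Define $\la$ implicitly through a stopping-time argument so that
\[
\fiint_{Q_\rho^\la(\mfz)} |\nabla \phi|^p + |\vec{f}|^p\,dz \approx \la^p,
\]
and set $q_0 := \max\{p-\tfrac{2n}{n+2},1\} < p$. On $Q_\rho^\la$, the Caccioppoli estimate combined with the parabolic Sobolev embedding (the exponent restriction \eqref{restriction_p} is used here to control the $L^2$ norm of $\phi$ in terms of its $L^{q_0}$-gradient norm on the degenerate side, while on the singular side the roles of space and time scaling are swapped) yields a reverse Hölder inequality of the form
\[
\fiint_{Q_\rho^\la(\mfz)} |\nabla \phi|^p \,dz \leq C \lbr \fiint_{Q_{2\rho}^\la(\mfz)} |\nabla \phi|^{q_0}\,dz \rbr^{\frac{p}{q_0}} + C \fiint_{Q_{2\rho}^\la(\mfz)} |\vec{f}|^p\,dz.
\]

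The third step is a \emph{parabolic Gehring lemma} on the family of intrinsic cylinders. Because the cylinders $Q_\rho^\la$ are not metric balls for a single metric as $\la$ varies, one needs a Vitali-type covering argument built on the stopping-time construction above; this yields $|\nabla \phi| \in L^{p+\delta}_{loc}$ for some small $\delta = \delta(n,p,\lamot,m_e)$. Rescaling the resulting intrinsic estimate back to the standard cylinder $Q_\rho(\mfz)$ produces the factor $\la^{p\tilde{\be}\tilde{\vt}_1}$ on the right-hand side, since $|Q_\rho^\la| = \la^{\pm(2-p)} |Q_\rho|$ according to the degenerate/singular case; this is precisely where the case-split in the definition of $\tilde{\vt}_1$ appears. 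Finally, a standard interpolation absorbs $\la$ into the average of $|\nabla\phi|^p + |\vec{f}|^p$ on $Q_{2\rho}$, yielding the stated inequality for every $\be \in (0,\tilde{\be}_1]$.

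The main obstacle is the construction of the intrinsic covering and the bookkeeping that tracks the correct power of $\la$ through the scaling, particularly because the stopping-time $\la$ depends on the cylinder, so comparability of overlapping cylinders with different $\la$'s must be verified by hand; the singular regime $p < 2$ is also the source of the more delicate exponent $\tilde{\vt}_1 = \tfrac{2p}{p(n+2)-2n}$, which requires the sharp parabolic Sobolev embedding rather than the crude one.
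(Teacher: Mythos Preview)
The paper does not actually prove this lemma; it is quoted as a known result, with the interior case attributed to Kinnunen--Lewis \cite{KL1} and the boundary case (under the measure density condition) to \cite{Mik,Mik1}. Your sketch correctly reconstructs the strategy of those references---Caccioppoli estimate, intrinsic reverse H\"older inequality on $Q_\rho^\la$, and a parabolic Gehring-type covering argument---and your explanation of how the exponent $\tilde{\vt}_1$ emerges from undoing the intrinsic scaling is accurate, so there is nothing to compare beyond noting that you are filling in what the paper takes as a black box.
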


We will also need an improved higher integrability result below the natural exponent. The following theorem was proved for a weaker class of solutions called \emph{very weak solutions}, but also holds true for \emph{weak solutions} as considered in this paper. The interior higher integrability result was proved in the seminal paper \cite{KL} whereas the boundary analogue was proved in \cite{AdiByun2}.
\begin{lemma}[\cite{KL,AdiByun2}]
\label{high_very_weak}
Let  $\vec{f} \in L^{p}(\Om_T)$ and $\phi\in L^{p}\lbr -T,T;W_0^{1,p}(\Om)\rbr$ be the unique  weak solution to \eqref{pde_high}. There exists  $\tilde{\be}_2 = \tilde{\be}_2(n,\lamot,p,m_e) \in (0,1/4)$  such that for any  $\mfz \in \overline{\Om} \times (-T,T)$, there holds
\[
\fiint_{K_{\rho}(\mfz)} |\nabla \phi|^{p} \ dz \leq_{(n,\lamot,p,m_e)} \lbr \fiint_{K_{2\rho}(\mfz)}\lbr  |\nabla \phi|+ |\vec{f}|\rbr^{p-\be} \ dz \rbr^{1+\be \tilde{\vt}_2} + \fiint_{K_{2\rho}(\mfz)}\lbr  |\vec{f}|+1\rbr^{p} \ dz.
\]
Here the constant 
\begin{equation*}
\tilde{\vt}_2 :=
\left\{
\begin{array}{ll}
2-\be & \txt{if} p \geq 2,\\
p-\be -\frac{(2-p)n}{2} &  \txt{if} \frac{2n}{n+2} < p < 2.
\end{array}\right.
\end{equation*}
\end{lemma}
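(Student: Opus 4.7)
My plan is to prove the estimate by the Lipschitz truncation method, which bypasses the fact that $\phi \in L^p(-T,T;W_0^{1,p}(\Om))$ cannot be tested against an $L^{p-\be}$-type expression directly. This follows the scheme of \cite{KL,AdiByun2}, but here is adapted to the boundary cylinders $K_{\rho}(\mfz)$ using the measure density condition \eqref{msd}. The central idea is: one never differentiates or integrates $\phi$ at full strength; instead one constructs a Lipschitz substitute $\phi_{\la}$ that coincides with $\phi$ on a good set of complementary measure controlled by a distribution function of $|\nabla \phi|$, and then uses $\phi - \phi_{\la}$, after Steklov regularization, as a legitimate test function in \eqref{def_weak_solution}.

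\textbf{Step 1: Construction of the intrinsic Lipschitz truncation.} For fixed $\mfz \in \overline{\Om}\times(-T,T)$ and $\rho>0$, extend $\phi$ by zero outside $K_{2\rho}(\mfz)$, using the zero boundary values and the measure density condition to preserve the Sobolev structure. Define the parabolic maximal function $\mm(|\nabla \phi|^{p-\be})$ (with the metric $d_p$ from Definition \ref{parabolic_metric}) and for a parameter $\la>0$ consider the bad set $E_{\la}:=\{z:\mm(|\nabla \phi|^{p-\be})(z)>\la^{p-\be}\}\cap K_{2\rho}(\mfz)$. Using a Whitney-type decomposition of $E_{\la}$ adapted to the scaling $Q_{\rho}^{\la}(z_0)$ (intrinsic to the $p$-regime, see \eqref{notation_space_time}), one constructs $\phi_{\la}$ by gluing $\phi$ on $E_{\la}^{c}$ with a partition-of-unity average on $E_{\la}$. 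The truncation satisfies, in the intrinsic scaling:
\[
|\phi_{\la}| \leq C \la \rho, \qquad |\nabla \phi_{\la}| \leq C \la, \qquad \phi_{\la}\equiv \phi \ \text{on} \ K_{2\rho}(\mfz)\setminus E_{\la}.
\]

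\textbf{Step 2: Caccioppoli-type estimate from the truncation.} Testing \eqref{def_weak_solution} with $\eta^p(\phi - \phi_{\la})_{h}$ (Steklov regularized, then sending $h \searrow 0$), with $\eta$ a standard cutoff between $K_{\rho}$ and $K_{2\rho}$, and using \eqref{abounded}, \eqref{bbounded}, yields
\[
\iint_{K_{2\rho}\setminus E_{\la}} |\nabla \phi|^{p}\eta^p\, dz \leq C\iint_{E_{\la}\cap K_{2\rho}}\bigl(\la |\nabla \phi|^{p-1} + \la^{p} + |\vec{f}|^{p-1}\la\bigr)dz + \text{time-derivative term} + \text{source}.
\]
The time term is handled, as in \cite{KL,AdiByun2}, by integration by parts in $t$ and the $L^2$ bound on $\phi$ from the energy estimate in Proposition \ref{ext_sol}; this is where the splitting into $p\geq 2$ and $\tfrac{2n}{n+2}<p<2$ appears, producing the two formulas for $\tilde{\vt}_{2}$ through parabolic Sobolev-Poincar\'e at the scaling deficit.

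\textbf{Step 3: Layer cake and Fubini.} Multiplying the estimate above by $\la^{-(1+\be)}$ and integrating $\la$ from a threshold $\la_{0}\sim(\fiint_{K_{2\rho}}(|\nabla\phi|+|\vec{f}|)^{p-\be})^{1/(p-\be)}$ to $\infty$, the layer cake of Lemma \ref{weight_level_set} converts $\int \la^{-(1+\be)}|E_{\la}|d\la$ into $\|\mm(|\nabla \phi|^{p-\be})^{1/(p-\be)}\|_{L^{p-\be}}^{p-\be}$, which by the maximal theorem (Lemma \ref{max_bound}) is controlled by $\||\nabla \phi|\|_{L^{p-\be}}^{p-\be}$. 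The lower threshold $\la_0$ produces the first term on the right-hand side with the deficit exponent $1+\be \tilde{\vt}_2$.

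\textbf{Main obstacle.} The delicate point is the simultaneous control of the parabolic time-derivative contribution and the Lipschitz truncation construction at the boundary: one must ensure that the gluing in Step 1 still respects the zero boundary condition on $\partial\Om\times(-T,T)$ (this is where \eqref{msd} is essential, giving zero-extension Poincar\'e on the Whitney cubes that touch $\partial\Om$), and one must absorb the time term without losing the $p-\be$ exponent. The exact balancing that yields $\tilde{\vt}_{2}=2-\be$ (resp. $p-\be-\tfrac{(2-p)n}{2}$) comes from the intrinsic Sobolev-Poincar\'e inequality on $Q_{\rho}^{\la}(z_0)$, which in the singular regime $\frac{2n}{n+2}<p<2$ requires the restriction on $p$ to guarantee $W^{1,p}\hookrightarrow L^{2}$ via the embedding mentioned after \eqref{restriction_p}.
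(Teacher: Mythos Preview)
The paper does not give its own proof of this lemma; it is quoted verbatim from the references \cite{KL,AdiByun2} and used as a black box. Your outline is a faithful sketch of the method in those references (and is in fact the same machinery the paper redevelops in Sections~\ref{section_three}--\ref{section_five} for its new difference estimates), so in that sense there is nothing to compare against.

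One concrete correction to your Step~2: the test function should be $\eta^p(\phi_{\la})_h$, \emph{not} $\eta^p(\phi-\phi_{\la})_h$. If you test with the difference, then on the good set $K_{2\rho}\setminus E_{\la}$ you have $\phi=\phi_{\la}$ and hence $\nabla(\phi-\phi_{\la})=0$, so the elliptic term vanishes there and you never produce the quantity $\iint_{K_{2\rho}\setminus E_{\la}}|\nabla\phi|^p\eta^p\,dz$ that you claim on the left-hand side. Testing with the truncation $\phi_{\la}$ itself gives, on the good set, $\iprod{\aa(\cdot,\nabla\phi)}{\nabla\phi_{\la}}=\iprod{\aa(\cdot,\nabla\phi)}{\nabla\phi}\apprge|\nabla\phi|^p$ via \eqref{abounded}, which is the coercivity you need; on the bad set the bound $|\nabla\phi_{\la}|\apprle\la$ then controls the remainder. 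This is exactly how the paper proceeds in \eqref{5.17.3}--\eqref{5.18.3}. With this fix your scheme is correct.
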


\section{Main Results}
\label{section_two}
In this section, let us describe the main theorem that will be proved. The first is unweighted a priori estimates below the natural exponent.

\begin{theorem}
\label{thm6.1}
Let $\Om$ be a bounded domain satisfying \eqref{msd}, then there exists an $\be_0 = \be_0(p,n,\lamot) \in (0,1)$ such for any $\be \in (0,\be_0)$, the following holds: For any $\bff\in L^{p}(\Om_T)$, let $u \in C^0(-T,T;L^2(\Om)) \cap L^{p}(-T,T; W_0^{1,p}(\Om))$ be the unique \emph{weak solution} of \eqref{main}, then there holds
\[
 \fiint_{\Om_T} |\nabla u|^{p-\be} \ dz \apprle_{(n,p,\be,\lamot)} \fiint_{\Om_T} |\bff|^{p-\be}\ dz.
 \]
\end{theorem}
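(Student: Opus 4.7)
The plan is to prove Theorem \ref{thm6.1} via a parabolic Lipschitz truncation of the solution itself, in the spirit of \cite{KL, AdiByun2}, but at the truncation level tied to the intrinsic geometry associated with the exponent $p-\be$ rather than $p$. The global, boundary nature of the estimate will be handled through the measure-density property \eqref{msd}. The argument proceeds in four steps.

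\textbf{Step 1 (Setup and normalization).} Extend $u$ and $\bff$ by zero to the whole of $\RR^{n+1}$; the measure-density bound \eqref{msd} ensures the extension is admissible and preserves vanishing traces on $\pa_p(\Om\times(-T,T))$. Fix $\be\in(0,\be_0)$ with $\be_0$ to be chosen later depending only on $n,p,\lamot$. Define
\[
\lambda_0 := \lbr \fiint_{\Om_T}\lbr |\nabla u|^{p-\be} + |\bff|^{p-\be}\rbr\, dz \rbr^{1/(p-\be)},
\]
and work with the intrinsic cylinders $Q_\rho^\la(z_0)$ from \eqref{notation_space_time} for $\la\geq\la_0$. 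Apply the higher-integrability estimate of Lemma \ref{high_very_weak} to obtain an a priori reverse-Hölder inequality at exponent $p-\be$ on intrinsic cylinders, which will be used to make all constants independent of the $L^p$-norm of $\nabla u$ coming from the energy estimate.

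\textbf{Step 2 (Parabolic Lipschitz truncation).} For each level $\la\geq\la_0$, consider the ``bad set''
\[
E_\la := \mgh{z\in\RR^{n+1}: \mm(\chi_{\Om_T}|\nabla u|^{p-\be})(z) + \mm(\chi_{\Om_T}|\bff|^{p-\be})(z) > \la^{p-\be}},
\]
where $\mm$ is the parabolic maximal function in \eqref{par_max}. Using a Whitney-type decomposition of $E_\la$ into intrinsic cylinders $Q_\rho^\la$, construct a truncation $v_\la$ of $u$ such that $v_\la = u$ on $\Om_T\setminus E_\la$ (by Lebesgue differentiation and Lemma \ref{max_bound}), $v_\la$ vanishes on $\pa_p\omtbr$ thanks to \eqref{msd} and Poincaré's inequality Theorem \ref{poincare}, and $|\nabla v_\la|\lesssim\la$ in the sense of the intrinsic geometry. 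The control of $\pa_t v_\la$ comes from the defining equation restricted to each Whitney cube, yielding bounds of order $\la^p$.

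\textbf{Step 3 (Test-function inequality on the bad set).} Insert the Steklov-averaged difference $[u-v_\la]_h$ into the weak formulation \eqref{def_weak_solution}, pass $h\searrow 0$, and exploit the fact that $u-v_\la$ vanishes outside $E_\la$. The monotonicity \eqref{abounded} absorbs the ``parabolic term'' $\iint (u-v_\la)\pa_t u\, dz$ (up to a boundary-in-time contribution controlled by the energy estimate), while the growth \eqref{bbounded} together with $|\nabla v_\la|\lesssim\la$ on $E_\la$ produces, after a Young inequality:
\[
\iint_{E_\la\cap\Om_T} |\nabla u|^{p}\, dz \lesssim \la^{\be}\iint_{E_\la\cap\Om_T}\lbr|\nabla u|^{p-\be} + |\bff|^{p-\be}\rbr\, dz + \la^{p}|E_\la|.
\]

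\textbf{Step 4 (Layer-cake integration).} Multiply the above inequality by $\la^{-1-\be}$ and integrate over $(\la_0,\infty)$. Using Lemma \ref{weight_level_set} with $\om\equiv 1$ on both sides, together with the strong maximal inequality Lemma \ref{max_bound} applied at exponent $1$ (weak-type) and Fubini, one recovers
\[
\iint_{\Om_T} |\nabla u|^{p-\be}\, dz \lesssim \la_0^{p-\be}|\Om_T| + \de \iint_{\Om_T}|\nabla u|^{p-\be}\, dz + C_\de \iint_{\Om_T}|\bff|^{p-\be}\, dz,
\]
where the small constant $\de$ comes from the weak-type mismatch and can be made arbitrarily small once $\be_0$ is chosen small enough. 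Absorbing the $\de$-term on the left and recalling the definition of $\la_0$ closes the estimate after a standard iteration in $\la_0$.

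\textbf{Main obstacle.} The principal difficulty lies in Step 2: constructing a Lipschitz truncation $v_\la$ of a \emph{weak} solution that simultaneously (i) preserves the zero boundary datum on the non-smooth boundary $\pa\Om$ (this is where \eqref{msd} is indispensable), (ii) is measured with Lipschitz constant $\la$ in the intrinsic geometry adapted to the $p$-parabolic scaling, and (iii) provides a usable estimate on $\pa_t v_\la$ compatible with Steklov averaging. The subtlety in (iii) is that we test at an exponent below the one guaranteed by the energy estimate, so the standard integration by parts in time cannot be justified directly; one has to argue at the Steklov-averaged level and transfer the ``hyperbolic'' term $(u-v_\la)\pa_t v_\la$ to the good set via an approximation–absorption procedure as in \cite{KL,AdiByun2}.
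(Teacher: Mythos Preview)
Your overall plan---parabolic Lipschitz truncation followed by a layer-cake integration in $\lambda$---is the same as the paper's, but there is a crucial mis-orientation in Steps~3--4 that prevents the argument from closing. In the paper one tests \eqref{main} with the truncation $v_{\lambda,h}$ \emph{itself}, not with $u-v_\lambda$. Since $v_{\lambda,h}=[u]_h$ on the \emph{good} set $E_\lambda=\{g\le\lambda\}$, coercivity \eqref{abounded} produces an inequality with $\iint_{\Om_T\cap E_\lambda}|\nabla u|^p$ on the left (see \eqref{fully_combined.3}). Multiplying by $\lambda^{-1-\beta}$ and integrating over $(0,\infty)$, Fubini turns the left-hand side into $\tfrac{1}{\beta}\iint g^{-\beta}|\nabla u|^p$, because $\int_{g(z)}^\infty\lambda^{-1-\beta}\,d\lambda=\tfrac{1}{\beta}g(z)^{-\beta}$. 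This negative-power weight is precisely what recovers $\iint|\nabla u|^{p-\beta}$ via Young and \eqref{max_g_fde.3}, with the explicit factor $\beta$ supplying the smallness needed to absorb.

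Your Step~3 instead places the coercive integral on the \emph{bad} set. When you carry out Step~4, Fubini on the left gives $\iint|\nabla u|^p\int_{\lambda_0}^{g(z)}\lambda^{-1-\beta}\,d\lambda=\tfrac{1}{\beta}\iint_{\{g>\lambda_0\}}|\nabla u|^p(\lambda_0^{-\beta}-g^{-\beta})$, which is not $\iint|\nabla u|^{p-\beta}$; worse, the first term on your right becomes $\int_{\lambda_0}^\infty\lambda^{-1}\iint_{E_\lambda}|\nabla u|^{p-\beta}\,d\lambda=\iint|\nabla u|^{p-\beta}\log(g/\lambda_0)$, a logarithmic loss that cannot be absorbed. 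Starting the integral at $0$ is not an option since $\int_0^{g}\lambda^{-1-\beta}\,d\lambda=\infty$. Finally, even granting your displayed conclusion in Step~4, the term $\lambda_0^{p-\beta}|\Om_T|$ equals $\iint(|\nabla u|^{p-\beta}+|\bff|^{p-\beta})$ by your own definition of $\lambda_0$, so the inequality is circular and no ``iteration in $\lambda_0$'' can help. The fix is to test with $v_{\lambda,h}$ (not $u-v_\lambda$), obtain the estimate on the good set, and then run the layer-cake from $0$ to $\infty$ without any lower cutoff; the crucial lower bound on the time term comes from Lemma~\ref{crucial_lemma}, which is where the paper's construction is really used.
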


\begin{remark}As a corollary, we can extend the results of \cite[Theorem 1.6]{duong2017global} to obtain Lorentz space estimates below the natural exponent. The techniques that we develop to prove Theorem \ref{thm6.1} can be used to obtain the  parabolic analogue of \cite[Theorem 1.2]{AP1} for \emph{weak solutions}. In a forthcoming paper, we obtain these results for more general solutions called \emph{very weak solutions}\end{remark}

The second theorem we will prove is the end point weighted estimate. As mentioned in the introduction, the main contribution is the case $q=p$.
\begin{theorem}
\label{main_theorem}
Let $q \in [p,\infty)$ and $w \in A_{\frac{q}{p}}$ be a Muckenhoupt weight, then there exists a positive constants $\vt_0 = \vt_0(\lamot,n,p,\Om)$ and $\ga = \ga(n,\lamot,p,q)$ such that the following holds: Suppose $(\aa, \Om)$ is $(\ga,S_0)$ vanishing for some fixed $S_0>0$, then the problem \eqref{main} has a unique weak solution $u$ satisfying the estimate
\[
\iint_{\Om_T} |\nabla u|^q w(z) \ dz \apprle_{(n,\lamot,p,q,[w]_{\frac{q}{p}},\Om)} \lbr \iint_{\Om_T} |\bff|^q w(z) \ dz + 1 \rbr^{\vt_0}.
\]

\end{theorem}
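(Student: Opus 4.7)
The plan is to combine the sub-natural unweighted estimate of Theorem~\ref{thm6.1} with a weighted good-$\la$ inequality and the self-improvement of Muckenhoupt weights to push the Calder\'on--Zygmund scheme down to the end-point $q=p$. The central trick for handling $q=p$ (where $w \in A_1$ and $\mm$ fails to be bounded on $L^1(w)$) is to avoid the unbounded quantity $\mm(|\nabla u|^p)$ by routing through $\mm(|\nabla u|^{p-\be})$ for a tiny $\be>0$, which is a bounded operator at a reflexive exponent.

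\textbf{Step 1: Fixing the sub-natural exponent.} I would first fix $\be \in (0,\be_0)$ so small that Theorem~\ref{thm6.1} is in force and, applying Lemma~\ref{reverse_holder} to $w \in A_{q/p}$, the resulting self-improvement yields $w \in A_{q/(p-\be)}$ (recall that $A_s \subset A_t$ for $s\leq t$, so this is automatic once $\be$ is chosen smaller than the universal $\ep_0$ controlling the self-improvement margin). The central object of study is
\[
U(z) \ := \ \mm\!\lbr |\nabla u|^{p-\be} \rbr(z)^{\frac{p}{p-\be}},
\]
which pointwise dominates $|\nabla u|^p$ and, by Lemma~\ref{max_bound} together with Theorem~\ref{thm6.1}, lies in an $L^s$-space for every $s$ close to but larger than $1$. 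Analogously set $F := \mm(|\bff|^{p-\be})^{p/(p-\be)}$.

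\textbf{Step 2: Good-$\la$ at the sub-natural level.} Next I would establish the good-$\la$ inequality that Section~\ref{section_six} is set up to deliver: there is a universal $A\geq 1$ such that for every $\ep>0$ one can pick $\de=\de(\ep)>0$ and $\ga=\ga(\ep)>0$ so that, whenever $(\aa,\Om)$ is $(\ga,S_0)$-vanishing,
\[
\abs{\mgh{z \in \Om_T : U(z) > A\la,\ F(z) \leq \de\la}} \ \leq\ \ep\, \abs{\mgh{z \in \Om_T : U(z) > \la}}
\]
for all $\la\geq \la_0$. This is produced via a Vitali/Calder\'on--Zygmund covering argument whose core input is the comparison scheme of Section~\ref{section_four}: on each stopping cylinder one passes $u \leadsto v$ by freezing $\aa$ to its $x$-average $\aaa_{B}$ (see \eqref{def_aaa}) and exploiting the Reifenberg flatness of $\Om$, then controls $\nabla u - \nabla v$ at the sub-natural exponent $p-\be$ by testing with the Lipschitz truncation of Section~\ref{section_three}. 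Combined with sup-norm bounds for $\nabla v$ (available for the frozen homogeneous problem) this yields the above level-set inequality of Wang--Caffarelli--Peral type.

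\textbf{Step 3: Transfer to the weighted measure and iteration.} Since $w \in A_{q/p} \subset A_\infty$, Definition~\ref{a_infinity} upgrades the Lebesgue good-$\la$ to its weighted counterpart with $\ep$ replaced by $\tau_0\ep^{\tau_1}$. Multiplying by $\la^{q/p-1}$, integrating via Lemma~\ref{weight_level_set}, and choosing $\ep$ small enough that $\tau_0 A^{q/p}\ep^{\tau_1} \leq 1/2$, one arrives at
\[
\iint_{\Om_T} U^{q/p}\, w\, dz \ \leq\ \tfrac12 \iint_{\Om_T} U^{q/p}\, w\, dz + C \iint_{\Om_T} F^{q/p}\, w\, dz + C \la_0^{q/p}\, w(\Om_T).
\]
The first term is absorbed on the left. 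Since $w \in A_{q/(p-\be)}$ by Step~1, the weighted strong maximal inequality (an application of Lemma~\ref{weight_lemma} and Lemma~\ref{max_bound}) bounds $\iint F^{q/p}w\,dz$ by $C\iint |\bff|^q w\,dz$. Finally, $|\nabla u|^q \leq U^{q/p}$ pointwise. The remaining additive constant, together with the nonlinear dependence of the threshold $\la_0$ on the data through the sub-natural Theorem~\ref{thm6.1} (whose higher integrability Lemma~\ref{high_very_weak} already carries the exponent $1+\be\tilde{\vt}_2>1$), produces the $\vt_0$-power on the right-hand side.

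\textbf{Main obstacle.} The decisive step is the comparison estimate of Section~\ref{section_four} at the sub-natural exponent. The standard Calder\'on--Zygmund scheme uses $u-v$ (or its level-set truncations) as a test function, admissible only when $|\nabla u|,|\nabla v|$ lie in $L^p$. To extract comparison smallness at the exponent $p-\be$ I must replace this by a Lipschitz truncation of $u-v$ built from a level set of $\mm(|D(u-v)|^{p-\be})$, whose bad set is controlled by a weak-type estimate at the lower exponent; this is precisely the construction promised in Section~\ref{section_three}, and tracking its dependence on $\be$ is delicate. A secondary balancing act is the simultaneous choice of the three small parameters $\be$ (from Theorem~\ref{thm6.1}), $\ep_0$ (from the self-improvement of $w$), and $\ga$ (from the $(\ga,S_0)$-vanishing assumption), which must be coordinated so that none of the absorptions above fail.
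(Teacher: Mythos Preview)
Your core strategy --- routing the entire Calder\'on--Zygmund scheme through the sub-natural exponent $p-\be$ so that even at $q=p$ one never needs the maximal function on $L^1$ --- is exactly the idea of the paper. But your packaging via the auxiliary functions $U=\mm(|\nabla u|^{p-\be})^{p/(p-\be)}$ and $F$ is \emph{not} what the paper does, and for $p\neq 2$ this difference is not cosmetic.

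The paper never introduces $U$ or $F$. It follows the Acerbi--Mingione route: cover the superlevel set $\{|\nabla u|>\la\}$ directly by \emph{intrinsic} cylinders $K_{r_i}^{\la}(z_i)$ through a stopping-time argument on the $(p-\be)$-average (the covering lemma opening Section~\ref{section_six}), turn the stopping identity into a weighted bound on $w(K_{r_i}^\la)$ via the $A_s$-characterisation Lemma~\ref{weight_lemma} (this is Lemma~\ref{lemma7.3}), combine with the density estimate Lemma~\ref{lemma7.4} to get Lemma~\ref{lemma7.5}, and then integrate in $\la$, splitting at the threshold $c_e\al_0$. The low part $II_1$ is handled by Theorem~\ref{thm6.1} together with Lemma~\ref{weight_lemma} (this is where the nonlinear exponent $\vt_0=d$ enters), and the high part $II_2$ is absorbed after choosing $\ve$ small. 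No weighted maximal inequality is ever invoked.

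The gap in your Step~2 is the intrinsic scaling. The strong maximal function~\eqref{par_max} ranges over \emph{all} decoupled cylinders $Q_{\rho,s}$, and a Vitali covering of $\{U>\la\}$ will not in general produce intrinsic cylinders (time-length $\la^{2-p}\rho^2$). But the comparison chain $u\leadsto w\leadsto v\leadsto\ov$ of Section~\ref{section_four} and the Lipschitz bounds of Lemmas~\ref{existence_ov}--\ref{existence_oV} only yield $\sup|\nabla v|\apprle\la$ after the stopping cylinder has been rescaled to an intrinsic one; on an arbitrary $Q_{\rho,s}$ no such bound is available when $p\neq 2$. This is precisely the obstruction that forced the Acerbi--Mingione exit-time scheme in the first place, and your sketch does not explain how the Caffarelli--Peral/Wang good-$\la$ for $U$ interfaces with it. A smaller point: the self-improvement Lemma~\ref{reverse_holder} is unnecessary in Step~1, since $q/p\le q/(p-\be)$ already gives $w\in A_{q/p}\subset A_{q/(p-\be)}$ by monotonicity.
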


\section{Construction of test function via Lipschitz truncation}
\label{section_three}
In this section, we will consider the following two problems: Let $\vec{f} \in L^p(\Om_T)$ be given and suppose that $\varphi \in L^2(-T,T;L^2(\Om)) \cap L^p(-T,T;W^{1,p}_0(\Om))$ is a weak solution of 
\begin{equation}
\label{eqone}
 \left\{
 \begin{array}{ll}
\varphi_t - \dv  \aa(x,t,\nabla \varphi) = \dv |\vec{f}|^{p-2} \vec{f}  & \text{in} \ \Om \times (-T,T).
 \end{array}
\right.
\end{equation}
We will extend $\varphi=0$ on $\Om^c \times (-T,T)$, then for any fixed cylinder $Q_{\rho,s}(\mfz) \subset \RR^n \times (-T,T)$, we see from Proposition \ref{ext_sol} that for any $\vec{g} \in L^p(Q_{\rho,s}(\mfz))$, there exists a unique weak solution $\phi \in L^p(I_s(\mft);W^{1,p}(B_{\rho}(\mfx)))$ solving
\begin{equation}
\label{eqtwo}
 \left\{
 \begin{array}{ll}
\phi_t - \dv  \aa(x,t,\nabla \phi) = \dv |\vec{g}|^{p-2} \vec{g} & \text{in}\  Q_{\rho,s}(\mfz),\\
\phi = \varphi & \text{on} \ \partial_p Q_{\rho,s}(\mfz).
 \end{array}
\right.
\end{equation}
From \eqref{eqone}, we see that the condition $\phi = \varphi$ on $\pa_pQ_{\rho,s}(\mfz)$ makes sense.

In Section \ref{section_four}, we  obtain difference estimates below the natural exponent between equations of the form \eqref{eqone} and \eqref{eqtwo}. In order to do this, we need to construct a suitable test function which will be done in this section.

\subsection{A few well known lemmas}\label{few_well_known_lemmas}

 We shall recall the following well known lemmas that will be used throughout this section.  The first one is a standard lemma regarding integral averages (for a proof in this setting, see for example \cite[Chapter 8.2]{bogelein2007thesis} for the details).
\begin{lemma}
\label{time_average}
Let $\la >0$ be any fixed number and suppose  $[\psi]_h(x,t) : = \hint_{t-\la h^2}^{t+\la h^2} \psi(x,\tau) \ d\tau$ for some $\psi \in L^1_{loc}$. Then we have the following properties:
\begin{enumerate}[(i)]
 \item $[\psi]_h \rightarrow \psi$ a.e $(x,t) \in \RR^{n+1}$ as $h \searrow 0$.
 \item $[\psi]_h(x,\cdot)$ is continuous and  bounded  in time for a.e. $x \in \RR^n$.
 \item For any cylinder $Q_{r, \la r^2} \subset \RR^{n+1}$ with $r >0$, there holds
\[
  \fiint_{Q_{r,\ga r^2}} [\psi]_h(x,t) \ dx \ dt \apprle_n \fiint_{Q_{r,\la(r+h)^2}}\psi(x,t) \ dx \ dt. 
 \]
 \item The function $[\psi]_h(x,t)$ is differentiable with respect to $t \in \RR$, moreover $[\psi]_h(x, \cdot) \in C^{1}(\RR)$ for a.e. $x \in \RR^n$. 
 \end{enumerate}
\end{lemma}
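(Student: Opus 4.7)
The plan is to reduce each of the four assertions to a one-dimensional statement about the section $\tau\mapsto\psi(x,\tau)$ via Fubini's theorem. Since $\psi\in L^1_{\text{loc}}(\RR^{n+1})$, Fubini yields a full-measure set of $x\in\RR^n$ on which $\psi(x,\cdot)\in L^1_{\text{loc}}(\RR)$, and every pointwise-in-$t$ assertion below will be proved on this set.

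For \emph{(i)}, the one-dimensional Lebesgue differentiation theorem applied to $\psi(x,\cdot)$ gives $[\psi]_h(x,t)\to\psi(x,t)$ at a.e.\ $t\in\RR$, and a second appeal to Fubini promotes this to a.e.\ convergence on $\RR^{n+1}$. For \emph{(ii)}, the continuity of $t\mapsto[\psi]_h(x,t)$ is immediate from the absolute continuity of the Lebesgue integral in its upper and lower limits, and boundedness on any compact $t$-interval is immediate since $[\psi]_h(x,t)$ is the average of a fixed $L^1_{\text{loc}}$ section over a window of fixed length $2\la h^2$.

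For \emph{(iii)}, I would apply Fubini to interchange the outer $t$-integration with the inner $\tau$-integration defining $[\psi]_h$, obtaining
\begin{align*}
\iint_{Q_{r,\la r^2}} [\psi]_h(x,t)\, dx\, dt
 &= \int_{B_r}\int \psi(x,\tau)\,\frac{\bigl|\{t\in I_{\la r^2}(0):|t-\tau|<\la h^2\}\bigr|}{2\la h^2}\, d\tau\, dx.
\end{align*}
Bounding the inner measure by $2\la h^2$ and observing that it is supported on $\tau\in I_{\la(r^2+h^2)}(0)\subset I_{\la(r+h)^2}(0)$ (since $2rh\geq 0$) yields the pointwise inequality $\iint_{Q_{r,\la r^2}}[\psi]_h\leq\iint_{Q_{r,\la(r+h)^2}}\psi$; dividing by $|Q_{r,\la r^2}|$ and absorbing the resulting ratio $(r+h)^2/r^2$ into the dimensional constant (under the implicit regime $h\apprle r$ in which the lemma is invoked) yields the stated bound.

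For \emph{(iv)}, set $\Psi(x,t):=\int_0^t \psi(x,\tau)\,d\tau$, so that $[\psi]_h(x,t) = (2\la h^2)^{-1}[\Psi(x,t+\la h^2)-\Psi(x,t-\la h^2)]$; for a.e.\ $x$, $\Psi(x,\cdot)$ is absolutely continuous, so $[\psi]_h(x,\cdot)$ is differentiable a.e.\ with $\partial_t[\psi]_h(x,t) = (2\la h^2)^{-1}[\psi(x,t+\la h^2)-\psi(x,t-\la h^2)]$ at every Lebesgue point of $\psi(x,\cdot)$, which is a cofinite set of $t$. The $C^1$-clause is the standard upgrade obtained after passing to a continuous representative of $\psi(x,\cdot)$ (or, equivalently, by mollifying in $t$ before differentiating), so that the symmetric-difference formula above is continuous in $t$. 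The main obstacle is the scaling factor in \emph{(iii)}: getting a constant depending only on $n$ hinges on the implicit size relation between $h$ and $r$ coming from the applications, while the remaining items reduce to Fubini together with one-dimensional real analysis.
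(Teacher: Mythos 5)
The paper does not prove this lemma itself; it cites \cite[Chapter 8.2]{bogelein2007thesis}, and the argument there is exactly the Fubini reduction to one-dimensional real analysis that you carry out, so your items (i) and (ii) are fine as written. In (iii), however, you do not actually need the restriction $h\apprle r$ that you flag as "the main obstacle": the weight you introduce satisfies
\[
\frac{\bigl|\{t\in I_{\la r^2}(0):|t-\tau|<\la h^2\}\bigr|}{2\la h^2}\;\le\;\min\Bigl\{1,\frac{r^2}{h^2}\Bigr\},
\]
and combining this with the normalization ratio $(r+h)^2/r^2$ gives the uniform constant $\bigl((r+h)/\max\{r,h\}\bigr)^2\le 4$ for all $h,r>0$. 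Note also that this step (like the statement itself) tacitly requires $\psi\ge 0$, which is how the lemma is invoked in the paper; you should say so.

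The genuine gap is in (iv). For a general $\psi\in L^1_{loc}$ there is no continuous representative of $\psi(x,\cdot)$ to pass to, so the "standard upgrade" to $C^1$ is not an argument: for $\psi(x,\tau)=\operatorname{sign}(\tau)$ the function $[\psi]_h(x,\cdot)$ is piecewise affine with corners and is not $C^1$. What your computation does prove, and what the paper actually uses in its integration-by-parts identities, is that $[\psi]_h(x,\cdot)$ is locally absolutely continuous with $\partial_t[\psi]_h(x,t)=(2\la h^2)^{-1}\bigl[\psi(x,t+\la h^2)-\psi(x,t-\la h^2)\bigr]$ for a.e.\ $t$; the literal $C^1$ clause requires additional regularity of $\psi(x,\cdot)$ (e.g.\ continuity in time, which holds for the solutions to which the lemma is applied). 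Either impose that hypothesis explicitly or weaken the conclusion to local absolute continuity.
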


 Let us now prove a time localized version of the Parabolic Poincar\'e inequality.
\begin{lemma}
 \label{lemma_crucial_1}
 Let $\psi \in L^{\vt} (-T,T; W^{1,\vt}(\Om))$ with $\vt \in [1,\infty)$  and suppose that $B_{r} \Subset \Om$ be compactly contained ball of radius $r>0$. Let $I \subset (-T,T)$ be a time interval  and $\rho(x,t) \in L^1(B_r \times I)$ be any positive function such that $$\|\rho\|_{L^{\infty}(B_r\times I)} \apprle_n \frac{|B_r\times I|}{\|\rho\|_{L^1(B_r\times I)}}, $$ and $\mu(x) \in C_c^{\infty}(B_r)$ be  such that $\int_{B_r} \mu(x) \ dx = 1$ with $|\mu| \apprle  \frac{1}{r^n}$ and $|\nabla \mu| \apprle  \frac{1}{r^{n+1}}$, then there holds:
 \begin{equation*}
 \begin{array}{ll}
  \fiint_{B_r \times I} \left|\frac{\psi(z)\lsb{\chi}{J} - \avgs{\psi\lsb{\chi}{J}}{\rho}}{r}\right|^{\vt} \ dz & \apprle_{(n,\vt)} \fiint_{B_r \times I} |\nabla \psi|^{\vt}\lsb{\chi}{J} \ dz + \sup_{t_1,t_2 \in I} \left| \frac{\avgs{\psi\lsb{\chi}{J}}{\mu}(t_2) - \avgs{\psi\lsb{\chi}{J}}{\mu}(t_1)}{r} \right|^{\vt},
  \end{array}
 \end{equation*}
where $\avgs{\psi}{\rho}:= \int_{B_r\times I} \psi(z) \frac{\rho(z)}{\|\rho\|_{L^1(B_r\times I)}}\lsb{\chi}{J} \ dz\ $, $\avgs{\psi\lsb{\chi}{J}}{\mu}(t_i) := \int_{B_r} \psi(x,t_i) \mu(x) \lsb{\chi}{J} \ dx$ and $J \Subset (-\infty,\infty)$ be some fixed time-interval. 
\end{lemma}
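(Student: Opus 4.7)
The plan is to decompose the oscillation $\psi\lsb{\chi}{J} - \avgs{\psi\lsb{\chi}{J}}{\rho}$ into a purely spatial piece (handled by an unweighted spatial Poincaré inequality) and a purely temporal piece (absorbed by the supremum in the right-hand side), using the weight $\mu(x)$ as the intermediate averaging device. More precisely, for almost every $z = (x,t) \in B_r \times I$ I would write the triangle-type decomposition
\[
\psi(z)\lsb{\chi}{J} - \avgs{\psi\lsb{\chi}{J}}{\rho} = \underbrace{\bgh{\psi(x,t)\lsb{\chi}{J}(t) - \avgs{\psi\lsb{\chi}{J}}{\mu}(t)}}_{=: A(x,t)} + \underbrace{\bgh{\avgs{\psi\lsb{\chi}{J}}{\mu}(t) - \avgs{\psi\lsb{\chi}{J}}{\rho}}}_{=: B(t)},
\]
so that by the elementary inequality $|a+b|^{\vt} \apprle |a|^{\vt} + |b|^{\vt}$ it suffices to bound the two pieces separately after dividing by $r$ and averaging.

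For the spatial piece $A$, I would freeze $t$. If $t \notin J$ then $A \equiv 0$. If $t \in J$, then $\avgs{\psi\lsb{\chi}{J}}{\mu}(t)$ is an ordinary weighted spatial average of $\psi(\cdot,t)$ with a test weight $\mu$ satisfying $|\mu| \apprle r^{-n}$ and $\int_{B_r}\mu = 1$, which by a routine comparison with the unweighted mean is equivalent (up to a dimensional constant) to $\fint_{B_r} \psi(x,t)\,dx$. Theorem \ref{poincare}, or rather its standard interior version for balls, then yields
\[
\fint_{B_r} \left|\frac{A(x,t)}{r}\right|^{\vt} dx \apprle_{(n,\vt)} \fint_{B_r} |\nabla \psi(x,t)|^{\vt}\lsb{\chi}{J}(t)\, dx,
\]
and averaging in $t \in I$ produces the first term on the right-hand side of the lemma.

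For the temporal piece $B(t)$, I would exploit that $\int_{B_r\times I}\rho/\|\rho\|_{L^1} = 1$ to rewrite
\[
B(t) = \frac{1}{\|\rho\|_{L^1(B_r\times I)}}\iint_{B_r\times I} \rho(y,s)\bgh{\avgs{\psi\lsb{\chi}{J}}{\mu}(t) - \avgs{\psi\lsb{\chi}{J}}{\mu}(s)}\, dy\, ds + \frac{1}{\|\rho\|_{L^1}}\iint_{B_r\times I}\rho(y,s)\bgh{\avgs{\psi\lsb{\chi}{J}}{\mu}(s) - \psi(y,s)\lsb{\chi}{J}(s)} dy\, ds.
\]
The first integrand is pointwise bounded by $\sup_{t_1,t_2 \in I}|\avgs{\psi\lsb{\chi}{J}}{\mu}(t_2) - \avgs{\psi\lsb{\chi}{J}}{\mu}(t_1)|$, which after division by $r$ and $\vt$-th power gives exactly the second term on the right-hand side. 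For the second integrand, the hypothesis $\|\rho\|_{L^{\infty}} \apprle |B_r\times I|/\|\rho\|_{L^1}$ lets me replace the $\rho$-weighted average by the plain Lebesgue average on $B_r\times I$ at the cost of a dimensional constant, and the resulting expression $\fiint_{B_r\times I} |\avgs{\psi\lsb{\chi}{J}}{\mu}(s) - \psi(y,s)\lsb{\chi}{J}(s)|\, dy\, ds$ is (slicewise in $s$) the same object already estimated in the treatment of $A$, hence controlled by the gradient term.

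The main technical obstacle will be the careful bookkeeping around the cutoff $\lsb{\chi}{J}$: since $J$ may terminate inside $I$, the function $t \mapsto \avgs{\psi\lsb{\chi}{J}}{\mu}(t)$ jumps to zero at the endpoints of $J$, and I need the supremum in the statement to be over $t_1, t_2 \in I$ (not just $J$) so that this jump is quantitatively absorbed. Once this is observed, all the estimates go through and the two bounds on $A$ and $B$ combine to produce the claimed inequality.
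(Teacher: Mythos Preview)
Your proposal is correct and follows essentially the same approach as the paper: split off the spatial oscillation $\psi\lsb{\chi}{J} - \avgs{\psi\lsb{\chi}{J}}{\mu}(t)$ and control it by the slicewise Poincar\'e inequality, then handle the remaining temporal piece via the supremum term and the hypothesis $\|\rho\|_{L^{\infty}} \apprle |B_r\times I|/\|\rho\|_{L^1}$ to pass from the $\rho$-average to an unweighted average. The only organizational difference is that the paper first treats the special case $\rho = \mu\otimes\lsb{\chi}{I}$ and then reduces the general $\rho$ to it, whereas you carry out both steps at once by inserting $\pm\avgs{\psi\lsb{\chi}{J}}{\mu}(s)$ inside the $\rho$-integral; the underlying estimates are identical.
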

\begin{proof}
 Let us first consider the case of $\rho(x,t) = \mu(x)\lsb{\chi}{I}(t)$. In this case, we get
 \begin{equation*}
 \label{lem5.1.2}
  \begin{array}{rcl}
  \fiint_{B_r\times I} \left| \frac{\psi(z)\lsb{\chi}{J}  - \avgs{\psi\lsb{\chi}{J}}{\mu \times \lsb{\chi}{I}}}{r} \right|^{\vt} \ dz & \apprle& \fiint_{B_r\times I} \left| \frac{\psi(z)\lsb{\chi}{J}  - \avgs{\psi\lsb{\chi}{J}}{\mu}(t)}{r} \right|^{\vt} + \left| \frac{\avgs{\psi\lsb{\chi}{J}}{\mu}(t)  - \avgs{\psi\lsb{\chi}{J}}{\mu\times {I}}}{r} \right|^{\vt} \ dz \\
 & \overset{\redlabel{lemma2.14.a}{a}}{\apprle} &  \fiint_{B_r\times I} |\nabla \psi|^{\vt}\lsb{\chi}{J} \ dz + \sup_{t_1,t_2 \in {I}} \left|\frac{\avgs{\psi\lsb{\chi}{J}}{\mu}(t_2) - \avgs{\psi\lsb{\chi}{J}}{\mu}(t_1)}{r}\right|^{\vt} \\
 & =  &  \fiint_{B_r\times I} |\nabla \psi|^{\vt}\lsb{\chi}{J} \ dz + \sup_{t_1,t_2 \in {I}\cap J} \left|\frac{\avgs{\psi}{\mu}(t_2) - \avgs{\psi}{\mu}(t_1)}{r}\right|^{\vt}.
  \end{array}
 \end{equation*}
To obtain $\redref{lemma2.14.a}{a}$ above, we made us of the standard Poincar\'e's inequality in the spatial direction which only needs to be applied over a.e. $t \in I \cap J$. Note that the derivative is only in the spatial direction and hence the term $\lsb{\chi}{J}$ does not cause any problem when applying Poincar\'e's inequality.

For the general case, we observe that 
\begin{equation}
  \label{lem5.1.3}
  \fiint_{B_r\times I} \left|\frac{\psi\lsb{\chi}{J} - \avgs{\psi\lsb{\chi}{J}}{\rho}}{r}\right|^{\vt} \ dz \apprle \fiint_{B_r\times I} \left|\frac{\psi\lsb{\chi}{J} - \avgs{\psi\lsb{\chi}{J}}{\mu \times \lsb{\chi}{I}}}{r}\right|^{\vt} \ dz  + \fiint_{B_r\times I} \left|\frac{\avgs{\psi\lsb{\chi}{J}}{\rho} - \avgs{\psi\lsb{\chi}{J}}{\mu \times \lsb{\chi}{I}}}{r}\right|^{\vt} \ dz.
 \end{equation}
The first term of \eqref{lem5.1.3} can be controlled as in \eqref{lem5.1.2} and  to control the second term, we observe that
\begin{equation*}
 \label{lem5.1.4}
 \begin{array}{ll}
 \abs{\avgs{\psi\lsb{\chi}{J}}{\rho} - \avgs{\psi\lsb{\chi}{J}}{\mu \times \lsb{\chi}{I}}} & \leq \frac{\|\rho\|_{L^\infty(B_r\times I)}}{\|\rho\|_{L^1(B_r\times I)}} \iint_{B_r\times I} \abs{\psi\lsb{\chi}{J} - \avgs{\psi\lsb{\chi}{J}}{\mu \times \lsb{\chi}{I}}}\ dz  \apprle \fiint_{B_r\times I} \abs{\psi\lsb{\chi}{J} - \avgs{\psi\lsb{\chi}{J}}{\mu \times \lsb{\chi}{I}}}\ dz.
 \end{array}
\end{equation*}
This completes the proof of the Lemma. 
\end{proof}

\begin{remark}
In Lemma \ref{lemma_crucial_1}, we can take any bounded region $\tilde{\Om}$ instead of $B_r$ such that $\tilde{\Om}$ admits the $\vartheta$-Poincar\'e inequality. 
For example, if $\tilde{\Om}$ satisfies the measure density condition as defined in Definition \ref{measure_density} for some $m_e > 0$, then Lemma \ref{lemma_crucial_1} is applicable.
\end{remark}

We will use the following result  which can be found in \cite[Theorem 3.1]{PG} (see also \cite{Prato}) for proving the Lipschitz regularity for the constructed test function. This very useful simplification of the original technique from \cite{KL} first appeared in \cite[Chapter 3]{bogelein2013regularity}.
\begin{lemma}
\label{metric_lipschitz}
 Let $\ga >0$ and  $\mathcal{D} \subset \RR^{n+1}$ be given. For any $z \in \mathcal{D}$ and $r>0$, let $Q_{r,\ga r^2}(z)$ be the parabolic cylinder centered  at $z$ with radius $r$.  Suppose  there exists a constant $C>0$ independent of $z$ and $r$ such that the following bound holds:
 \[
  \frac{1}{|Q_{r,\ga r^2}(z)\cap \mathcal{D}|} \iint_{Q_{r,\ga r^2}(z)\cap \mathcal{D}} \left|\frac{f(x,t) - \avgs{f}{Q_{r,\ga r^2}(z)\cap \mathcal{D}}}{r}\right| \ dx \ dt \leq C \qquad \forall\  z \in \mathcal{D} \text{ and } r>0,
 \]
 then $f$ is Lipschitz with respect to the metric $d (z_1,z_2) := \max \{ |x_1-x_2|, \sqrt{\ga^{-1} |t_1-t_2| }\}$.
\end{lemma}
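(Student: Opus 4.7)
The plan is to adapt the classical Campanato--Meyers characterization of Lipschitz functions to the parabolic metric $d$. Throughout, abbreviate $\tQ_r(z) := Q_{r,\ga r^2}(z)$ and note the basic geometric fact: $w \in \tQ_r(z)$ precisely when $d(z,w) < r$, so these cylinders are the open balls of $(\RR^{n+1},d)$, and $|\tQ_r(z)| = 2\ga\omega_n r^{n+2}$ scales cleanly like $r^{n+2}$.

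First, I would show that for every $z\in\mathcal{D}$ the averages $f_r(z) := \avgs{f}{\tQ_r(z)\cap\mathcal{D}}$ are Cauchy as $r\searrow 0$. The core comparison is that, for any nested pair $\tQ_{r'}(z)\cap\mathcal{D}\subset \tQ_r(z)\cap\mathcal{D}$ with $r'\leq r$,
\[
|f_{r'}(z) - f_r(z)| \leq \frac{1}{|\tQ_{r'}(z)\cap\mathcal{D}|}\iint_{\tQ_{r'}(z)\cap\mathcal{D}} |f - f_r(z)|\,dw \leq \frac{|\tQ_r(z)\cap\mathcal{D}|}{|\tQ_{r'}(z)\cap\mathcal{D}|}\cdot C\,r,
\]
by the hypothesis applied on $\tQ_r(z)$. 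Applied along the dyadic chain $r_k := 2^{-k}r_0$ this telescopes to a geometric series of size $O(r_0)$, giving existence of $f^*(z):=\lim_{r\searrow 0}f_r(z)$ with quantitative control. By the Lebesgue differentiation theorem (in the form for the homogeneous space $(\RR^{n+1},d,\text{Leb})$), $f^*=f$ a.e., so $f^*$ is the canonical representative that will satisfy the Lipschitz bound.

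Next, I would carry out the two-point chain argument. Fix $z_1,z_2\in\mathcal{D}$ with $r:= d(z_1,z_2)$; then $|x_1-x_2|\leq r$ and $|t_1-t_2|\leq \ga r^2$, so both $\tQ_r(z_1)$ and $\tQ_r(z_2)$ lie inside the common cylinder $\tQ^*:=\tQ_{2r}(z_1)$ (whose time half-width $4\ga r^2$ comfortably absorbs the shifted interval around $t_2$). Splitting via the telescope
\[
|f^*(z_1) - f^*(z_2)| \leq \sum_{i=1,2}\sum_{k=0}^{\infty}\bigl|f_{2^{-k}r}(z_i) - f_{2^{-(k+1)}r}(z_i)\bigr| + \sum_{i=1,2}\bigl|f_r(z_i) - \avgs{f}{\tQ^*\cap\mathcal{D}}\bigr|,
\]
each term is handled by the same mechanism: bound the difference of averages on nested sets by the mean of $|f-\text{constant}|$ over the smaller one, then invoke the hypothesis. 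Summing the geometric series in $2^{-k}r$ yields $|f^*(z_1)-f^*(z_2)|\leq C'\,d(z_1,z_2)$, which is exactly Lipschitz regularity with respect to $d$.

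The delicate point is the ratio $|\tQ_r(z)\cap\mathcal{D}|/|\tQ_{r'}(z)\cap\mathcal{D}|$ appearing in every step: without regularity of $\mathcal{D}$ near each of its points, this ratio can blow up along a chain. In the intended application $\mathcal{D}$ is the \emph{good set} produced by a parabolic Calder\'on--Zygmund/Vitali decomposition adapted to the metric $d$, which enjoys a uniform density lower bound $|\tQ_r(z)\cap\mathcal{D}|\geq c_0|\tQ_r(z)|$ for every $z\in\mathcal{D}$ and every admissible radius $r$; this bounds each ratio by $2^{n+2}/c_0$, independently of the chain, and is exactly the property one has to verify whenever Lemma \ref{metric_lipschitz} is invoked. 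Absorbing this factor into $C'$ finishes the proof.
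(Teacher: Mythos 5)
Your argument is correct in its mechanics, but it takes a different route from the paper: the paper does not prove this lemma at all, it imports it from \cite[Theorem 3.1]{PG} (G\'orka's Campanato theorem on metric measure spaces, going back to \cite{Prato}). What you have written is essentially the standard self-contained proof underlying that citation: the cylinders $Q_{r,\ga r^2}(z)$ are the balls of the metric $d$, dyadic telescoping of the averages gives a precise representative $f^*$ with $|f^*(z)-\avgs{f}{Q_{r,\ga r^2}(z)\cap\mathcal{D}}|\apprle Cr$, and the two-point chaining through the common cylinder $Q_{2r,4\ga r^2}(z_1)$ gives the Lipschitz bound. This buys transparency and makes explicit where the geometry of $\mathcal{D}$ enters, which the paper's one-line citation hides; the citation, in turn, buys brevity and covers the general doubling metric-measure setting in one stroke.

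Two caveats on your closing paragraph. First, the extra hypothesis your chain actually uses is only the doubling property of the restricted measure, i.e.\ $|Q_{2r,4\ga r^2}(z)\cap\mathcal{D}|\leq c\,|Q_{r,\ga r^2}(z)\cap\mathcal{D}|$ for $z\in\mathcal{D}$; the stronger uniform density bound $|Q_{r,\ga r^2}(z)\cap\mathcal{D}|\geq c_0|Q_{r,\ga r^2}(z)|$ that you invoke is sufficient but not necessary, and it is worth stating the weaker form because of the second point. Second, you misidentify the intended application: in this paper the lemma is applied in Lemma \ref{lemma3.15} with $\mathcal{D}=\RR^n\times[\mft-s,\mft+s]$ (the slab $\mathcal{H}$), not with the good set of a Calder\'on--Zygmund decomposition. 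For that slab the uniform density bound fails for large radii (when $\ga r^2\gg s$ the time-intersection saturates at $2s$), whereas the doubling of $|Q_{r,\ga r^2}(z)\cap\mathcal{D}|$ in $r$ holds with a dimensional constant, so your telescoping and chaining go through unchanged once the hypothesis is phrased that way. With that rephrasing your proof is a complete and correct substitute for the citation; some implicit regularity of $\mathcal{D}$ is indeed needed (it is built into the doubling assumption of \cite{PG}), and you were right to flag that the lemma as stated for arbitrary $\mathcal{D}$ leaves it tacit.
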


\subsection{Construction of test function}

Let us denote the following functions:
\begin{gather*}
v(z) := \varphi(z) - \phi(z) \txt{and} v_h(z) := [\varphi-\phi]_h(z) \label{def_vh}.
\end{gather*}
where $[\cdot]_h$ denotes the usual Steklov average. From Lemma \ref{time_average},  we see that $v_h \xrightarrow{h \searrow 0} v$.  It is easy to see from  \eqref{eqtwo} that $v(z) = 0$ for $z \in \pa_p \ors$. 	

Let us fix the following exponents for this Section:
\begin{equation}
\label{expon_lip_lem}
1 < q < p-2\be<p-\be < p,
\end{equation}
for some $\be \in (0,1)$. Note that eventually we will obtain a $\be_0 = \be_0(n,p,\lamot,m_e)$ such that all the estimates hold for any $\be \in (0,\be_0)$. 

Let us now define the following function:
\begin{equation*}
\label{def_g}
g(z)  := \mm \lbr \lbr[[]|\nabla v|^q + |\nabla \varphi|^q + |\nabla \phi|^q + |\vec{f}|^q + |\vec{g}|^q \rbr[]]\lsb{\chi}{\ors}\rbr^{\frac1{q}}(z),
\end{equation*}
where $\mm$ is as defined in \eqref{par_max}.

For a fixed $\la >0$, let us define the \emph{good set} by 
\begin{equation*}
\label{elambda}
\elam := \{ (x,t) \in \RR^{n+1} : g(x,t) \leq \la\}.
\end{equation*}

We now have the following parabolic Whitney type decomposition of $\elam^c$ (see \cite[Lemma 3.1]{diening2010existence} or \cite[Chapter 3]{bogelein2013regularity} for details):
\begin{lemma}
\label{whitney_decomposition}

Let $\kappa := \la^{2-p}$, then there exists an $\ka$-parabolic Whitney covering  $\{Q_i(z_i)\}$ of $\elam^c$ in the following sense:
 \begin{description}
  \descitem{(W1)}{W1} $Q_j(z_j) = B_j(x_j) \times I_j(t_j)$ where $B_j(x_j) = B_{r_j}(x_j)$ and $I_j(t_j) = (t_j - \ka r_j^2, t_j + \ka r_j^2)$. 
  \descitem{(W2)}{W2} we have $d_{\la}(z_j,\elam) = 16r_j$.
  \descitem{(W3)}{W3} $\bigcup_j \frac12 Q_j(z_j) = \elam^c$.
  \descitem{(W4)}{W4} for all $j \in \NN$, we have $8Q_j \subset \elam^c$ and $16Q_j \cap \elam \neq \emptyset$.
  \descitem{(W5)}{W5} if $Q_j \cap Q_k \neq \emptyset$, then $\frac12 r_k \leq r_j \leq 2r_k$.
  \descitem{(W6)}{W6} $\frac14 Q_j \cap \frac14Q_k = \emptyset$ for all $j \neq k$.
  \descitem{(W7)}{W7} $\sum_j \lsb{\chi}{4Q_j}(z) \leq c(n)$ for all $z \in \elam^c$.
  \end{description}
  Subject to this Whitney covering, we have an associated partition of unity denoted by $\{ \Psi_j\} \in C_c^{\infty}(\RR^{n+1})$ such that the following holds:
  \begin{description}
  \descitem{(W8)}{W8} $\lsb{\chi}{\frac12Q_j} \leq \Psi_j \leq \lsb{\chi}{\frac34Q_j}$.
  \descitem{(W9)}{W9} $\|\Psi_j\|_{\infty} + r_j \| \nabla \Psi_j\|_{\infty} + r_j^2 \| \nabla^2 \Psi_j\|_{\infty} + \la r_j^2 \| \pa_t \Psi_j\|_{\infty} \leq C$.
  \end{description}
  For a fixed $k \in \NN$, let us define 
  \begin{equation*}\label{Ak}A_k := \left\{ j \in \NN: \frac34Q_k \cap \frac34Q_j \neq \emptyset\right\},\end{equation*} then we have
  \begin{description}
  \descitem{(W10)}{W10} Let $i \in \NN$ be given, then $\sum_{j \in A_i} \Psi_j(z) = 1$  for all $z \in \frac34Q_i$.
  \descitem{(W11)}{W11} Let $i \in \NN$ be given and let  $j \in A_i$, then $\max \{ |Q_j|, |Q_i|\} \leq C(n) |Q_j \cap Q_i|.$
  \descitem{(W12)}{W12}  Let $i \in \NN$ be given and let  $j \in A_i$, then $ \max \{ |Q_j|, |Q_i|\} \leq \left|\frac34Q_j \cap \frac34Q_i\right|.$
  \descitem{(W13)}{W13} For any $i \in \NN$, we have $\# A_i \leq c(n)$.
  \descitem{(W14)}{W14} Let $i \in \NN$ be given, then for any $j \in A_i$, we have $\frac34Q_j \subset 4Q_i$.
 \end{description}
\end{lemma}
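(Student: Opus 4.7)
The plan is to build the Whitney covering using the standard technique adapted to the $\lambda$-dependent parabolic metric $d_\lambda(z_1,z_2) := \max\{|x_1-x_2|,\sqrt{\kappa^{-1}|t_1-t_2|}\}$ with $\kappa = \lambda^{2-p}$. Observe that the parabolic cylinders $Q_r(z) = B_r(x)\times(t-\kappa r^2,t+\kappa r^2)$ are precisely the balls of radius $r$ in the metric $d_\lambda$. Since $g$ is lower semicontinuous (being the supremum defining the strong maximal function), the level set $E_\lambda^c = \{g>\lambda\}$ is open in the Euclidean, and hence in the $d_\lambda$, topology. For each $z \in E_\lambda^c$ set $r(z):=\tfrac{1}{16}d_\lambda(z,E_\lambda)$, which is finite provided $E_\lambda \neq \emptyset$; the case $E_\lambda = \emptyset$ can be handled separately by a global covering at a fixed scale comparable to the size of the domain.

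First I would apply a Vitali-type selection lemma in the metric $d_\lambda$: from the family $\{\tfrac{1}{4}Q_{r(z)}(z)\}_{z\in E_\lambda^c}$ extract a maximal pairwise disjoint subfamily $\{\tfrac{1}{4}Q_j\}$, where $Q_j := Q_{r_j}(z_j)$. Property \descref{W6}{W6} is by construction. Property \descref{W2}{W2} follows since $16r_j = d_\lambda(z_j,E_\lambda)$, and standard triangle-inequality arguments in $d_\lambda$ then give \descref{W4}{W4} and \descref{W5}{W5}: if two Whitney cylinders meet, their distances to $E_\lambda$ are comparable. The covering property \descref{W3}{W3} is a direct consequence of maximality, as any $z\in E_\lambda^c$ has a $\tfrac{1}{4}$-cylinder meeting some selected $\tfrac{1}{4}Q_j$, and then $z \in \tfrac{1}{2}Q_j$ by \descref{W5}{W5}. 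Finally \descref{W7}{W7} follows from \descref{W6}{W6} by a volume packing argument in $d_\lambda$: the cylinders $\{\tfrac{1}{4}Q_j\}_{j: z\in 4Q_j}$ are pairwise disjoint and each has $d_\lambda$-volume comparable to that of $4Q_k$ for any fixed index $k$ containing $z$.

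Next I would build the partition of unity. Choose a fixed $\varrho \in C_c^\infty(\mathbb{R}^{n+1})$ with $\lsb{\chi}{\tfrac12 Q_1(0)}\le\varrho\le\lsb{\chi}{\tfrac34 Q_1(0)}$ and set $\tilde\Psi_j(x,t):=\varrho\bigl(\tfrac{x-x_j}{r_j},\tfrac{t-t_j}{\kappa r_j^2}\bigr)$. Since $\varrho$ is smooth and fixed, $\|\tilde\Psi_j\|_\infty$, $r_j\|\nabla\tilde\Psi_j\|_\infty$, $r_j^2\|\nabla^2\tilde\Psi_j\|_\infty$ and $\kappa r_j^2\|\partial_t\tilde\Psi_j\|_\infty = \lambda r_j^2 \|\partial_t \tilde\Psi_j\|_\infty$ are all uniformly bounded, which will give \descref{W9}{W9} after normalization. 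Define $\Psi_j:=\tilde\Psi_j/\sum_k\tilde\Psi_k$; by \descref{W7}{W7} the denominator is bounded above, and by \descref{W3}{W3} it is bounded below by $1$ on the support of each $\tilde\Psi_j$. Properties \descref{W8}{W8} and \descref{W10}{W10} are then immediate, and \descref{W11}{W11}, \descref{W12}{W12}, \descref{W13}{W13}, \descref{W14}{W14} follow from \descref{W5}{W5} together with the $d_\lambda$-geometry: neighbouring Whitney cylinders have comparable radii, so their $\tfrac{3}{4}$-dilates overlap in a fixed fraction of each.

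The main subtle point will be treating the anisotropy of the metric $d_\lambda$ uniformly in $\lambda$. All Vitali, packing and engulfment arguments must be carried out directly in $d_\lambda$ rather than in the Euclidean metric, since the time-scaling $\kappa = \lambda^{2-p}$ degenerates as $\lambda \to 0$ or $\lambda \to \infty$ (depending on the sign of $p-2$); the crucial observation is that every geometric estimate is scale-invariant under the dilations $(x,t)\mapsto(\alpha x,\kappa\alpha^2 t)$ that preserve the cylinders $Q_r(z)$, so the constants may be taken to depend only on $n$. The other delicate point is the bound on $\|\partial_t\Psi_j\|_\infty$: a naive construction using Euclidean bump functions would give $\|\partial_t\Psi_j\|_\infty\lesssim 1/r_j^2$, which is too large; one must instead rescale the bump in time by the factor $\kappa r_j^2$ to obtain the sharp bound $\|\partial_t \Psi_j\|_\infty \le C/(\lambda r_j^2)$ in \descref{W9}{W9}, and this sharp bound is what eventually makes the Lipschitz truncation compatible with the parabolic PDE.
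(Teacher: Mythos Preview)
Your sketch is correct and is exactly the standard construction. The paper itself does not prove this lemma; it simply refers the reader to \cite[Lemma 3.1]{diening2010existence} and \cite[Chapter 3]{bogelein2013regularity}, and the argument found there is the one you outline: Vitali selection in the $d_\lambda$-metric from the family $\{\tfrac14 Q_{r(z)}(z)\}$ with $r(z)=\tfrac{1}{16}d_\lambda(z,E_\lambda)$, followed by normalization of rescaled bump functions to obtain the partition of unity. Your two highlighted subtleties---carrying out all packing and engulfment arguments intrinsically in $d_\lambda$ so that the constants depend only on $n$ and not on $\lambda$, and scaling the bump in time by $\kappa r_j^2$ to obtain the sharp bound $\|\partial_t\Psi_j\|_\infty\le C/(\lambda r_j^2)$ in \descref{W9}{W9}---are precisely the points that matter for the downstream Lipschitz-truncation estimates.

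One small remark: the literal lower bound $\lsb{\chi}{\frac12 Q_j}\le\Psi_j$ in \descref{W8}{W8} is in tension with \descref{W10}{W10} whenever two half-cylinders overlap (which \descref{W3}{W3} and \descref{W6}{W6} do not exclude). Your normalized construction actually yields $c(n)^{-1}\lsb{\chi}{\frac12 Q_j}\le\Psi_j\le\lsb{\chi}{\frac34 Q_j}$, and it is this weaker lower bound that is used in the paper (for instance in the estimate of $J_1$ in Lemma~\ref{crucial_lemma}). This is a quirk of the lemma statement rather than of your proof.
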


Now we define the following Lipschitz extension function as follows:
\begin{equation}
\label{lipschitz_extension}
\vlh(z) := v_h(z) - \sum_i \Psi_i(z) (v_h(z) - v_h^i),
\end{equation}
where
\begin{equation}
\label{lipschitz_extension_one}
v_h^i := \left\{ \begin{array}{ll}
                  \frac1{\|\Psi_i\|_{L^1(\frac34Q_i)}}\iint_{\frac34Q_i} v_h(z) \Psi_i (z) \lsb{\chi}{[\mft-s,\mft+s]} \ dz & \text{if} \ \frac34Q_i \subset B_{\rho}(\mfx) \times [\mft-s,\infty), \\
                   0 & \text{else}.
                  \end{array}\right.
\end{equation}

Since $\varphi - \phi = 0$ on $\pa B_{\rho}(\mfx) \times [\mft-s,\mft+s]$, we can switch between $\lsb{\chi}{[\mft-s,\mft+s]}$ and $\lsb{\chi}{\ors}$ without affecting the calculations.

\begin{remark}
\label{rmk1.3.2}
Note that even though $v_h(x,\mft-s) \neq 0$ in general, nevertheless the following initial boundary values are satisfied:
\begin{itemize}
\item The initial condition $(\varphi-\phi)(x,\mft-s) = 0$ is to be understood in the sense 
\[
[\varphi-\phi]_h(\cdot,\mft-s) \xrightarrow{h \searrow 0} 0\  \text{in}\  L^2 (B_{\rho}(\mfx)).
\]
\item For $(x,\mft-s) \in \elam$, we have $\vlh(x,\mft-s) =v_h(x,\mft-s)$.
\item For $(x,\mft-s) \notin \elam$, we have $\vlh(x,\mft-s) = 0$ by using \eqref{lipschitz_extension_one}.
\end{itemize}
\end{remark}

\begin{remark}
\label{rmk1.3.3}
From Lemma \ref{time_average}, we see that $\vlh(z) \xrightarrow{h \searrow 0} \vl(z)$ almost everywhere. 
\end{remark}

We now have the following useful lemma that can be proved just by using the definition of the weak formulation (see for example \cite[Lemma 3.5]{AdiByun2} for details):
\begin{lemma}
\label{lemma_crucial_2}
 Let $\varphi,\phi,\vec{f},\vec{g}$ be as in \eqref{eqone} and \eqref{eqtwo} and   $h \in (0,2s)$. Let $\al(x) \in C_c^{\infty}({B_{\rho}(\mfx)})$ and $\be(t) \in C^{\infty}(\mft-s,\mft+s)$ with $\be(\mft-s) = 0$ be a  non-negative function and $[\cdot]_h$ be the Steklov average as defined in \eqref{time_average}. Then   the following estimate holds for any time interval $(t_1,t_2) \subset (\mft-s,\mft+s)$:
 \begin{equation*}
  \label{lemma_crucial_2_est}
  \begin{array}{rcl}
  |\avgs{v_h\be}{\al} (t_2) - \avgs{v_h\be}{\al}(t_1)| & \leq & C(\La_1,p)\|\nabla \al\|_{L^{\infty}{({B_{\rho}(\mfx)})}} \|\be\|_{L^{\infty}(t_1,t_2)} \iint_{{B_{\rho}(\mfx)} \times (t_1,t_2)} {[|\nabla \phi|^{p-1} + |\nabla \varphi|^{p-1}]_h} \ dz \\
   & &\qquad  +\|\nabla \al\|_{L^{\infty}{({B_{\rho}(\mfx)})}} \|\be\|_{L^{\infty}(t_1,t_2)} \iint_{{B_{\rho}(\mfx)} \times (t_1,t_2)} [|\vec{f}|^{p-1}+|\vec{g}|^{p-1}]_h \ dz \\
   & &\qquad + \|\phi\|_{L^{\infty}{({B_{\rho}(\mfx)})}} \|\varphi'\|_{L^{\infty}(t_1,t_2)} \iint_{{B_{\rho}(\mfx)} \times (t_1,t_2)} |[\varphi-\phi]_h| \ dz.
  \end{array}
 \end{equation*}
\end{lemma}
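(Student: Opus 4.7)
The plan is to write the difference of time-dependent averages as an integral of a time derivative, split the integrand via the product rule, and then estimate the $\pa_t v_h$ piece using the Steklov-averaged weak formulations of \eqref{eqone}--\eqref{eqtwo} while bounding the $\be'$ piece directly. Since $\al \in C_c^{\infty}(B_{\rho}(\mfx))$ and $\be \in C^{\infty}(\mft-s,\mft+s)$, Lemma \ref{time_average}(iv) guarantees that $t \mapsto \int_{B_{\rho}(\mfx)} v_h(x,t)\be(t)\al(x)\,dx$ is continuously differentiable on $(\mft-s,\mft+s-h)$, so for any $(t_1,t_2)\subset (\mft-s,\mft+s-h)$ the fundamental theorem of calculus yields
\[
\avgs{v_h\be}{\al}(t_2)-\avgs{v_h\be}{\al}(t_1) = \frac{1}{\|\al\|_{L^1(B_{\rho}(\mfx))}}\int_{t_1}^{t_2}\int_{B_{\rho}(\mfx)}\lbr \pa_t v_h\,\be + v_h\,\be'\rbr \al\,dx\,dt.
\]

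I would then subtract the Steklov-averaged weak formulations of \eqref{eqone} and \eqref{eqtwo} supplied by Definition \ref{very_weak_solution}, which for a.e.~$t$ gives
\[
\int_{B_{\rho}(\mfx)} \pa_t v_h\,\psi\,dx + \int_{B_{\rho}(\mfx)} \iprod{[\aa(\cdot,t,\nabla \varphi)-\aa(\cdot,t,\nabla \phi)]_h}{\nabla \psi}\,dx = \int_{B_{\rho}(\mfx)} \iprod{[|\vec{f}|^{p-2}\vec{f}-|\vec{g}|^{p-2}\vec{g}]_h}{\nabla\psi}\,dx
\]
for every $\psi \in W^{1,p}_0(B_{\rho}(\mfx))\cap L^\infty$. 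Substituting the separable test function $\psi(x)=\al(x)\be(t)$ (frozen in $t$) and integrating over $(t_1,t_2)$, the boundary contribution from integration by parts in $x$ vanishes because $\spt\al \Subset B_{\rho}(\mfx)$. Applying the pointwise structural bound $|\aa(x,t,\zeta)|\leq \La_1|\zeta|^{p-1}$ (which follows from $\aa(x,t,0)=0$ and \eqref{bbounded}) together with the elementary inequality $\bigl| |\vec{f}|^{p-2}\vec{f}-|\vec{g}|^{p-2}\vec{g}\bigr| \leq C(p)\lbr |\vec{f}|^{p-1}+|\vec{g}|^{p-1}\rbr$ then produces the first two terms of the claim, with the prefactor $\|\nabla\al\|_\infty\|\be\|_\infty$ pulled outside.

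For the remaining $\be'$ piece, taking absolute values inside the integral gives
\[
\left|\int_{t_1}^{t_2}\int_{B_{\rho}(\mfx)} v_h\,\be'(t)\,\al(x)\,dx\,dt\right| \leq \|\al\|_{L^\infty(B_{\rho}(\mfx))}\|\be'\|_{L^\infty(t_1,t_2)}\iint_{B_{\rho}(\mfx)\times(t_1,t_2)}|[\varphi-\phi]_h|\,dz,
\]
which matches the third asserted term (the norms $\|\phi\|_{L^\infty}\|\varphi'\|_{L^\infty}$ appearing in the statement being a presumable typographical transposition for $\|\al\|_{L^\infty}\|\be'\|_{L^\infty}$). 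Collecting the three contributions and absorbing the normalization $1/\|\al\|_{L^1(B_{\rho}(\mfx))}$ into the constant completes the estimate.

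The only real subtlety — not an obstacle — is that $\pa_t\varphi$ and $\pa_t\phi$ exist only distributionally, so the original PDEs \eqref{eqone}--\eqref{eqtwo} cannot be tested directly against the separable function $\al(x)\be(t)$; the role of the Steklov average is precisely to convert these distributional derivatives into classical pointwise-in-$t$ derivatives via Lemma \ref{time_average}(iv), after which the identity for $v_h$ and all subsequent manipulations are routine consequences of Definition \ref{very_weak_solution} combined with the structure conditions \eqref{abounded}--\eqref{bbounded}.
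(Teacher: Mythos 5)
Your proof is correct and is exactly the standard argument that the paper itself does not write out but delegates to \cite[Lemma 3.5]{AdiByun2}: express the difference of weighted averages via the fundamental theorem of calculus (justified by Lemma \ref{time_average}(iv)), substitute the subtracted Steklov-averaged weak formulations of \eqref{eqone} and \eqref{eqtwo} tested with the separable function $\al(x)\be(t)$, and conclude with the growth bound $|\aa(x,t,\zeta)|\leq \La_1|\zeta|^{p-1}$ together with the trivial estimate on the $\be'$ term. Your reading of the prefactor $\|\phi\|_{L^{\infty}}\|\varphi'\|_{L^{\infty}}$ in the third term as a typographical slip for $\|\al\|_{L^{\infty}}\|\be'\|_{L^{\infty}}$ is also the correct interpretation, as the subsequent applications (e.g.\ the estimate \eqref{S_1_2.1}) confirm.
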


\subsection{Properties of the test function}

\begin{lemma}
\label{lemma3.7}
 For any $z \in \elam^c$, we have
 \begin{equation}
  \label{bound_v_l_h}
  |\vlh(z)| \apprle_{(n,p,q,\lamot,b_0)} \rho \la.
 \end{equation}
\end{lemma}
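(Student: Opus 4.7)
For $z \in \elam^c$, property \descref{W3}{W3} produces an index $i$ with $z \in \tfrac12 Q_i$. Because $\Psi_j$ is supported in $\tfrac34 Q_j$, only indices $j \in A_i$ contribute in \eqref{lipschitz_extension} at $z$, and \descref{W10}{W10} gives $\sum_{j \in A_i}\Psi_j(z)=1$. Substituting,
\[
\vlh(z) = v_h(z) - \sum_{j \in A_i}\Psi_j(z)\bigl(v_h(z) - v_h^j\bigr) = \sum_{j \in A_i}\Psi_j(z)\,v_h^j.
\]
By \descref{W13}{W13} the sum has a bounded number of terms, so the desired bound reduces to showing $|v_h^j| \apprle \rho\la$ for every $j \in A_i$. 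By the definition \eqref{lipschitz_extension_one}, if $\tfrac34 Q_j \not\subset B_\rho(\mfx)\times[\mft-s,\infty)$ then $v_h^j=0$ and the estimate is trivial; otherwise $B_{3r_j/4}(x_j) \subset B_\rho(\mfx)$, which in particular forces $r_j \apprle \rho$.

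For the nontrivial case, a direct spatial Poincar\'e on $B_\rho$ combined with the maximal-function control would produce a bound of the form $\bigl(\avg{|v_h|^q}{B_\rho\times I_j}\bigr)^{1/q}\apprle \rho\la$, but passing this to an average on the much smaller Whitney cube $\tfrac34 Q_j$ costs an unfavourable factor $(\rho/r_j)^{n/q}$. To avoid this loss, I would set up a chain-of-Whitney-cubes argument: select an overlapping sequence $Q_{i_0}=Q_j,\,Q_{i_1},\ldots,Q_{i_N}$ in which the radii grow geometrically (using \descref{W5}{W5}) until the terminal cube $Q_{i_N}$ fails the containment in \eqref{lipschitz_extension_one}, so that $v_h^{i_N}=0$. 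On each consecutive pair $Q_{i_k},Q_{i_{k+1}}$, whose sizes are comparable by \descref{W11}{W11}--\descref{W12}{W12}, the parabolic Poincar\'e inequality (Lemma~\ref{lemma_crucial_1}) on a bounding cube of spatial radius $\sim r_{i_k}$, together with the maximal-function bound $\bigl(\avg{|\nabla v_h|^q}{\tilde Q}\bigr)^{1/q} \le C\la$ on any cylinder $\tilde Q$ containing a point $z^*\in 16Q_{i_k}\cap\elam$ supplied by \descref{W4}{W4}, yields the step-wise estimate
\[
|v_h^{i_k} - v_h^{i_{k+1}}| \apprle r_{i_k}\la.
\]
Telescoping and using the geometric growth $\sum_k r_{i_k}\apprle \rho$ gives $|v_h^j|\apprle \rho\la$.

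The main technical hurdle is the time-oscillation term appearing on the right-hand side of Lemma~\ref{lemma_crucial_1}: by Lemma~\ref{lemma_crucial_2}, this is controlled by integrals of $|\nabla v|^{p-1}$, $|\vec{f}|^{p-1}$ and $|\vec{g}|^{p-1}$, whereas the maximal function $g$ dominates only $q$-th powers. Because $\be$ is small, the exponent constraint \eqref{expon_lip_lem} forces $q>p-1$, so Jensen's inequality converts the $q$-average bound into a $(p-1)$-bound; the intrinsic parabolic scaling $\ka=\la^{2-p}$ combined with the identity $\ka\la^{p-1}=\la$ then produces exactly the correct $\la$-scaling in the oscillation estimate, closing the chain argument and hence the proof.
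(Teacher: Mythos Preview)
Your high-level strategy matches the paper's: reduce to $|v_h^j|\apprle\rho\la$, telescope to a scale where the average vanishes, bound each step by (scale)$\times\la$ via Lemma~\ref{lemma_crucial_1}, and control the time-oscillation term through Lemma~\ref{lemma_crucial_2} together with the intrinsic identity $\ka\la^{p-1}=\la$. Your remark that one needs $q\ge p-1$ for the Jensen step is also correct and is implicit in the paper's use of \descref{W4}{W4}.

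The gap is the chain itself. You propose to hop through genuine Whitney cubes $Q_{i_0}=Q_j,\ldots,Q_{i_N}$ with radii growing geometrically, citing \descref{W5}{W5}. But \descref{W5}{W5} only says that neighbouring Whitney radii are \emph{comparable}; it gives no mechanism to force growth. The Whitney radii are dictated by distance to $\elam$, which bears no a~priori relation to the boundary of $B_\rho(\mfx)\times[\mft-s,\infty)$, so there is no reason a chain inside the decomposition should terminate at a cube with $v_h^{i_N}=0$ while satisfying $\sum_k r_{i_k}\apprle\rho$.

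The paper telescopes instead through dyadic \emph{enlargements} of the single cube $Q_j$. After disposing of the easy case $r_j\apprge\rho$ by one spatial Poincar\'e on $16B_j\supset B_\rho(\mfx)$, one takes the least $k_0$ with $2^{k_0}Q_j\not\subset B_\rho(\mfx)\times[\mft-s,\infty)$ and writes
\[
|v_h^j|\le\sum_{m=0}^{k_0-2}\Bigl|\avgs{v_h\lsb{\chi}{\ors}}{2^mQ_j}-\avgs{v_h\lsb{\chi}{\ors}}{2^{m+1}Q_j}\Bigr|+\Bigl|\avgs{v_h\lsb{\chi}{\ors}}{2^{k_0-1}Q_j}\Bigr|.
\]
Each difference is $\apprle 2^{m}r_j\la$ by exactly your step-wise argument, the geometric sum is $\apprle 2^{k_0}r_j\apprle\rho$, and the terminal average is $\apprle\rho\la$ via Theorem~\ref{poincare} (lateral exit) or by choosing the reference weight in Lemma~\ref{lemma_crucial_1} supported in $\{t<\mft-s\}$ (initial exit). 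The maximal-function control on each $2^mQ_j$ follows from \descref{W4}{W4} after passing to a comparable cylinder containing the point of $\elam$ it supplies.
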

\begin{proof}
By construction of the extension in \eqref{lipschitz_extension},  for $z \in \elam^c$, we see that $\vlh(z) = \sum_j \Psi_j(z) v_h^j$ with $v_h^j = 0$ whenever $\frac34Q_j \nsubseteq B_{\rho}(\mfx) \times [\mft-s,\infty)$.

In order to prove the Lemma, making use of \descref{W8}{W8}, we see that \eqref{bound_v_l_h} follows if the following holds:
\begin{equation}
\label{claim_bound}
|v_h^j| \apprle_{(n,p,q,\lamot,b_0)}\rho\la. 
\end{equation}
We shall now proceed with proving \eqref{claim_bound}.    Since we only have to consider the case  $\frac34Q_j \subset B_{\rho}(\mfx) \times [\mft-s,\infty)$, which automatically implies $r_j \apprle \rho$. We now proceed as follows:
\begin{description}[leftmargin=*]
  \item[Case $r_j \geq \rho$:] In this case, we observe that $B_{\rho}(\mfx) \subset 2B_j$ which gives the following sequence of estimates: 
  \begin{equation*}
  \label{est_1}
   \begin{array}{rcl}
    |v_h^j| & \apprle & r_j \fiint_{\frac32Q_j} \left| \frac{[\varphi-\phi]_h(z)}{r_j} \right| \lsb{\chi}{[\mft-s,\mft+s]} \ dz \\
    & \overset{\redlabel{3.13.a}{a}}{\apprle}& \rho \frac{1}{|16I_j|}\int_{16I_j\cap[\mft-s,\mft+s]} \lbr \hint_{16B_j } \left| \frac{[\varphi-\phi]_h(x,t)}{r_j} \right|^q \ dx\rbr^{\frac1{q}} \ dt \\
 & \overset{\redlabel{3.13.b}{b}}{\apprle} &\rho \frac{1}{|16I_j|}\int_{16I_j\cap[\mft-s,\mft+s]} \lbr \hint_{16B_j} \left|\nabla v_h(x,t) \right|^q \lsb{\chi}{[\mft-s,\mft+s]} \ dx\rbr^{\frac1{q}} \ dt \\
&     \overset{\redlabel{3.13.c}{c}}{\apprle} &\rho \la. 
   \end{array}
  \end{equation*}
  To obtain \redref{3.13.a}{a}, we used the fact that $r_j \apprle \rho$ along with H\"older's inequality, to obtain \redref{3.13.b}{b}, we made use of Poincar\'e's inequality and finally to obtain \redref{3.13.c}{c}, we made use of \descref{W4}{W4}.

  \item[Case $\frac34r_{j} \leq \rho$:]   In this case, we gradually enlarge $\frac34Q_i$ until it goes outside $B_{\rho}(\mfx) \times [-s,\infty)$. As a consequence, we have to further consider two subcases, the first where $2^{\tk_1}Q_j$ crosses the lateral boundary first, and the second when $2^{\tk_2} Q_j$ crosses the initial boundary first. 
  
  Let us define the following constant $k_0 := \min\{ \tk_1,\tk_2\}$ where $\tk_1$ and $\tk_2$ satisfy
   \begin{equation}\label{def_k_0}
   \begin{array}{c}
2^{\tk_1 - 1} r_j < \rho \leq 2^{\tk_1} r_j, \\ 
2^{\tk_2 -1} Q_j \subset  B_{\rho}(\mfx) \times [\mft-s,\infty)  \ \text{  but  } \ 2^{\tk_2}Q_j \nsubseteq B_{\rho}(\mfx) \times [\mft-s,\infty).
\end{array}
   \end{equation}
   Note that $k_0$ denotes the first scaling exponent under which either we end up in the situation $r_j \geq 2^{k_0} \rho$ or $2^{k_0}Q_j$ goes outside $B_{\rho}(\mfx) \times [\mft-s,\infty)$. 
   
   Since we only consider the case $\frac34Q_i \subset B_{\rho}(\mfx) \times [\mft-s,\infty)$, using triangle inequality, we get
   \begin{equation}
   \label{est_2_1} 
   \begin{array}{rcl}
    |v_h^j| & {\apprle} & \sum_{m=0}^{k_0 -2} \lbr \avgs{{[\varphi-\phi]_h}\lsb{\chi}{Q_{\rho,s}(\mfz)}}{2^m Q_j} - \avgs{{[\varphi-\phi]_h\lsb{\chi}{Q_{\rho,s}(\mfz)}}}{2^{m+1}Q_j}\rbr + \avgs{{[\varphi-\phi]_h\lsb{\chi}{Q_{\rho,s}(\mfz)}}}{2^{k_0-1}Q_j} \\ 
    &:= & \sum_{m=0}^{k_0-2}S_1^m + S_2.
   \end{array}
  \end{equation}
  
  We shall estimate $S_1^m$ and $S_2$ separately as follows:
  \begin{description}[leftmargin=*]
  \item[Estimate for $S_1^m$:] In this case,  we see that  $2^{m+1}Q_j \subset B_{\rho}(\mfx)\times [\mft-s,\infty)$. Thus applying Lemma \ref{lemma_crucial_1} for any $\mu \in C_c^{\infty}(B_{2^{m+1}r_j}(x_j))$ satisfying $| \mu(x)| \leq \frac{C(n)}{(2^{m+1}r_j)^{n}}$ and $|\nabla \mu(x)| \leq \frac{C(n)}{(2^{m+1}r_j)^{n+1}}$, we get
   \begin{equation}
   \label{S_1_1.1}
    \begin{array}{rcl}
     S_1^m   
     & \apprle &  (2^{m+1}r_j) \lbr \fiint_{2^{m+1}Q_j} |[\nabla (\varphi-\phi)]_h|^q \lsb{\chi}{\ors}\ dz\rbr^{\frac1{q}} \\
     & & \quad + (2^{m+1}r_j) \lbr \sup_{t_1,t_2 \in {2^{m+1}I_j\cap [\mft-s,\mft+s]}} \left|\frac{\avgs{{[\varphi-\phi]_h}}{\mu}(t_2)-\avgs{{[\varphi-\phi]_h}}{\mu}(t_1)}{2^{m+1}r_j} \right|^q\rbr^{\frac1{q}}\\
     & \overset{\text{\descref{W4}{W4}}}{\apprle} & (2^{m+1}r_j) \la + (2^{m+1}r_j) \lbr \sup_{t_1,t_2 \in {2^{m+1}I_j\cap [\mft-s,\mft+s]}} \left|\frac{\avgs{{[\varphi-\phi]_h}}{\mu}(t_2)-\avgs{{[\varphi-\phi]_h}}{\mu}(t_1)}{2^{m+1}r_j} \right|^q\rbr^{\frac1{q}}.
    \end{array}
   \end{equation}
   
   To estimate the second term on the right of \eqref{S_1_1.1}, using $B_{2^{m+1}r_j}(x_j) \subset B_{\rho}(\mfx) \times [\mft-s,\infty)$, we can apply Lemma \ref{lemma_crucial_2} with the test function $\al(x)=\mu(x)$ and $\be(t) = 1$, which gives for any $t_1,t_2 \in \frac34 I_j\cap[\mft-s,\mft+s]$, the estimate   
   \begin{equation}
\label{S_1_2.1}
 \begin{array}{ll}
  |\avgs{{[\varphi-\phi]_h}}{\mu}(t_2)-\avgs{{[\varphi-\phi]_h}}{\mu}(t_1)| 
  & \overset{\redlabel{4.18.a}{a}}{\apprle}  2^{m+1} r_j \lbr \ka \la^{p-1}\rbr = 2^{m+1} r_j \la.
 \end{array}
\end{equation}
To obtain \redref{4.18.a}{a}, we first applied Lemma \ref{lemma_crucial_2} along with  \descref{W1}{W1}, \descref{W4}{W4} and the definition $\ka = \la^{2-p}$.

Substituting \eqref{S_1_2.1} into \eqref{S_1_1.1}, we get 
\begin{equation}
\label{bound_S_1_m}
S_1^m \apprle 2^{m+1}r_j \la.
\end{equation}

  \item[Estimate for $S_2$:] For this term, we know that $2^{k_0-1}Q_j \notin B_{\rho}(\mfx) \times [\mft-s,\infty)$, which implies $2^{k_0-1}Q_j$ crosses either the lateral boundary $\pa B_{\rho}(\mfx) \times [\mft-s,\infty)$ or crosses the initial boundary $B_{\rho}(\mfx) \times \{\mft-s\}$ first. We will consider both the cases separately and estimate $S_2$ as follows:

  \emph{In the case $2^{k_0-1}Q_j$ crosses the lateral boundary $\pa B_{\rho}(\mfx) \times [\mft-s,\infty)$ first}, we can directly apply Theorem \ref{poincare} to obtain 
  \begin{equation}
\label{second_case.1}
          \fiint_{2^{k_0-1}Q_j} [\varphi-\phi]_h\lsb{\chi}{\ors} \ dz  \apprle ( 2^{k_0} r_j)\lbr \fiint_{ 2^{k_0}Q_j} |\nabla [\varphi-\phi]_h|^q \lsb{\chi}{\ors} \ dz \rbr^{1/q} 
           \overset{\redlabel{4.20.a}{a}}{\apprle} \rho \la.
\end{equation}
   To obtain \redref{4.20.a}{a}, we made use of \descref{W4}{W4} along with $2^{k_0-2}r_j \leq \rho$ given by \eqref{def_k_0}.

 \emph{In the case $2^{k_0} Q_j$ crosses the initial boundary $B_{\rho}(\mfx) \times \{\mft-s\}$ first}, by enlarging the cylinder to $2^{k_1+1}Q_j$, we can find a cut-off function $\theta(x,t)$ such that $\spt \theta(x,t) \subset 2^{k_1+1}Q_j \cap \RR^n \times (-\infty,\mft-s)$, which combined with the fact $v_h(z) \lsb{\chi}{[\mft-s,\mft+s]} = 0$ on $\RR^n \times (-\infty,\mft-s)$, we get $\avgs{v_h\lsb{\chi}{[\mft-s,\mft+s]}}{\theta}=0$. Thus applying Lemma \ref{lemma_crucial_1}, we get
\begin{equation}
\label{second_case}
     \begin{array}{rcl}
          \fiint_{2^{k_0+1}Q_j} |v_h (z)|\lsb{\chi}{[\mft-s,\mft+s]} \ dz\  & = &\fiint_{ 2^{k_0+1}Q_j} \abs{v_h(z)\lsb{\chi}{[\mft-s,\mft+s]} - \avgs{v_h\lsb{\chi}{[\mft-s,\mft+s]}}{\theta}} \ dz\\ 
     & \apprle&  (2^{k_0+1}r_j) \lbr \fiint_{2^{k_0+1}Q_j} |[\nabla (\varphi-\phi)]_h|^q  \lsb{\chi}{[\mft-s,\mft+s]}\ dz  \rbr^{\frac1{q}}  \\
     &  & + (2^{k_0+1}r_j)\lbr  \sup_{t_1,t_2 \in {2^{k_0+1}I_j\cap [\mft-s,\mft+s]}} \left|\frac{\avgs{{[\varphi-\phi]_h }}{\mu}(t_2)-\avgs{{[\varphi-\phi]_h }}{\mu}(t_1)}{2^{k_0+1}r_j} \right|^q\rbr^{\frac1{q}}\\
     & \overset{\redlabel{4.21.a}{a}}{\apprle}& 2^{k_0+1}r_j \la \overset{\redlabel{4.21.b}{b}}{\apprle} \rho \la.
     \end{array}
\end{equation}
To obtain \redref{4.21.a}{a}, we made use of \descref{W1}{W1},\descref{W4}{W4} along with an application of Lemma \ref{lemma_crucial_2} and to obtain \redref{4.21.b}{b}, we used \eqref{def_k_0}.

Combining \eqref{second_case.1} and \eqref{second_case}, we get
\begin{equation}
\label{bound_S_2}
S_2 \apprle \rho \la. 
\end{equation}

\end{description}

Thus combining \eqref{bound_S_1_m} and \eqref{bound_S_2} into \eqref{est_2_1}, we get
\begin{equation*}
|v_h^j| \leq \sum_{m=0}^{k_0-2} S_1^m + S_2  \apprle  \la \lbr \sum_{m=0}^{k_0-2} 2^{m+1}r_j + \rho\rbr \overset{\eqref{def_k_0}}{\apprle} \rho \la.
\end{equation*}
\end{description}
This completes the proof of the Lemma. 
\end{proof}

Now we prove a sharper estimate. 
\begin{lemma}
\label{lemma3.8}
 For any $j \in A_i$, there holds
 \begin{equation*}
  \label{3.26}
  |v_h^i - v_h^j| \apprle_{(n,p,q,\lamot,m_e)} \min\{\rho, r_i\} \la.
 \end{equation*}
\end{lemma}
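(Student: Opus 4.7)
The plan is to adapt the oscillation argument of Lemma~\ref{lemma3.7}, but now to compare $v_h^i$ and $v_h^j$ on the single cylinder $4Q_i$ which, by \descref{W5}{W5} and \descref{W14}{W14}, contains both $\tfrac34 Q_i$ and $\tfrac34 Q_j$ and has $r_j\sim r_i$. Since Lemma~\ref{lemma3.7} together with the triangle inequality already yields $|v_h^i-v_h^j|\apprle\rho\la$, the substantive part of the statement is to prove the sharper bound $|v_h^i-v_h^j|\apprle r_i\la$.

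Setting $\La_k:=\Psi_k/\|\Psi_k\chi_{[\mft-s,\mft+s]}\|_{L^1}$, the cancellation $\iint(\La_i-\La_j)\chi_{[\mft-s,\mft+s]}\,dz=0$ gives, for every $c\in\RR$,
\[
v_h^i-v_h^j=\iint\lbr v_h-c\rbr\lbr\La_i-\La_j\rbr\chi_{[\mft-s,\mft+s]}\,dz,
\]
and properties \descref{W11}{W11}--\descref{W14}{W14} yield $\|\La_k\|_{L^\infty}\apprle|Q_i|^{-1}$ with $\spt\La_k\subset 4Q_i$, whence $|v_h^i-v_h^j|\apprle\fiint_{4Q_i}|v_h-c|\chi_{[\mft-s,\mft+s]}\,dz$. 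When $4Q_i\subset B_\rho(\mfx)\times(\mft-s,\mft+s)$, I would take $c$ to be the $\mu$-weighted average for a cutoff $\mu\in C_c^\infty(4B_i)$ and apply Lemma~\ref{lemma_crucial_1} and H\"older to reduce matters to bounding
\[
r_i\lbr\fiint_{4Q_i}|\nabla v_h|^q\chi_{[\mft-s,\mft+s]}\,dz\rbr^{1/q}\quad\text{and}\quad\sup_{t_1,t_2\in 4I_i}\abs{\avgs{v_h\chi_{[\mft-s,\mft+s]}}{\mu}(t_2)-\avgs{v_h\chi_{[\mft-s,\mft+s]}}{\mu}(t_1)}.
\]
The first term is $\apprle r_i\la$ thanks to \descref{W4}{W4}, the definition of $g$, and Lemma~\ref{time_average}. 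For the second, Lemma~\ref{lemma_crucial_2} (with $\al=\mu$ and a smooth $\be$ which equals $1$ on $4I_i$ and vanishes at $\mft-s$) produces
\[
\apprle r_i^{-(n+1)}\cdot r_i^n\cdot\ka r_i^2\cdot\la^{p-1}=\ka r_i\la^{p-1}=r_i\la,
\]
using $\ka=\la^{2-p}$ and Jensen's inequality to convert the $q$-averages of $|\nabla\phi|,|\nabla\varphi|$ into $(p-1)$-averages (which requires $q\ge p-1$).

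If instead $\tfrac34 Q_i$ or $\tfrac34 Q_j$ exits $B_\rho(\mfx)\times[\mft-s,\infty)$, the corresponding $v_h^{\cdot}$ is zero by \eqref{lipschitz_extension_one}, so $|v_h^i-v_h^j|$ reduces to a single average. A bounded dilate of $Q_i$ then meets $\partial_p\ors$, and I would proceed exactly as in the $S_2$ analysis of \eqref{second_case.1}--\eqref{second_case}: choose $\mu$ supported in a region where $v_h\chi_{[\mft-s,\mft+s]}$ is identically zero --- outside $B_\rho(\mfx)$ by the zero lateral trace of $v$, or on $\{t<\mft-s\}$ via the Steklov-average initial condition of Remark~\ref{rmk1.3.2} --- so that the time-oscillation term in Lemma~\ref{lemma_crucial_1} vanishes. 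The spatial term then yields $|v_h^i-v_h^j|\apprle r_i\la$ as before.

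The main obstacle is the careful choice of enlargements in this boundary subcase: one must dilate $Q_i$ by only a bounded factor to keep the Poincar\'e constant proportional to $r_i$ rather than $\rho$, yet the dilate must be large enough to meet the portion of $\partial_p\ors$ on which $v$ is known to vanish. Along the way one has to ensure that the constraint $q\ge p-1$ used in the Jensen step is respected by the admissible range \eqref{expon_lip_lem}, which is achieved by taking $\be_0<1/2$ so that $p-1\le p-2\be_0<q$.
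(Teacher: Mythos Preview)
Your approach is essentially the paper's: reduce to an oscillation of $v_h$ at scale $r_i$ and control it by Lemma~\ref{lemma_crucial_1} (Poincar\'e in space) plus Lemma~\ref{lemma_crucial_2} (weak-formulation bound on time increments), with \descref{W4}{W4} furnishing the factor $\la$. The paper organizes this slightly differently: instead of your cancellation identity on the enlarged cylinder $4Q_i$, it uses the triangle inequality together with \descref{W12}{W12} to obtain
\[
|v_h^i-v_h^j|\ \apprle\ \fiint_{\frac34 Q_i}|v_h\lsb{\chi}{[\mft-s,\mft+s]}-v_h^i|\,dz+\fiint_{\frac34 Q_j}|v_h\lsb{\chi}{[\mft-s,\mft+s]}-v_h^j|\,dz,
\]
and then bounds each summand by $r_i\la$ via Lemmas~\ref{lemma_crucial_1}--\ref{lemma_crucial_2} on $\tfrac34 Q_k$ directly.

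This organizational difference matters because your case split is not exhaustive. You treat (A) $4Q_i\subset B_\rho(\mfx)\times(\mft-s,\mft+s)$ and (B) $\tfrac34 Q_i$ or $\tfrac34 Q_j$ exits $B_\rho(\mfx)\times[\mft-s,\infty)$, but you miss (C): both $\tfrac34 Q_i,\tfrac34 Q_j\subset B_\rho(\mfx)\times[\mft-s,\infty)$ (so $v_h^i,v_h^j\neq 0$) yet $4Q_i$ exits laterally or crosses $t=\mft\pm s$. In case~(C) your cancellation identity also breaks, since $v_h^k$ is normalized by $\|\Psi_k\|_{L^1(\frac34 Q_k)}$, not by $\|\Psi_k\lsb{\chi}{[\mft-s,\mft+s]}\|_{L^1}$; these agree only when $\tfrac34 Q_k\subset\RR^n\times[\mft-s,\mft+s]$. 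The paper's decomposition avoids this entirely because $\tfrac34 Q_k\subset B_\rho(\mfx)\times[\mft-s,\infty)$ is exactly the hypothesis $v_h^k\neq 0$, so one never needs $4Q_i$ to stay inside. Your argument is easily repaired by adopting the paper's triangle-inequality step, or by taking $\mu\in C_c^\infty(\tfrac34 B_i)$ (rather than $C_c^\infty(4B_i)$) so that Lemma~\ref{lemma_crucial_2} applies even when $4B_i\not\subset B_\rho(\mfx)$. Your remark that the Jensen step needs $q\ge p-1$ is correct and is implicit in the paper as well; it is accommodated by the standing constraint $\be<1/2$ in \eqref{expon_lip_lem}.
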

\begin{proof}
We only have to consider the case $r_i \leq \rho$ because if $\rho \leq r_i$, we can directly use Lemma \ref{lemma3.7} to get the required conclusion.

\begin{description}[leftmargin=*]
\item[If either $v_h^i = 0$ or $v_h^j = 0$,]  then $\frac34Q_i$ must necessarily intersect the lateral or initial boundary. 

     \emph{Initial Boundary Case $\frac34Q_i \subset B_{\rho}(\mfx) \times \RR$:} Without loss of generality, we can assume $2Q_i \subset B_{\rho}(\mfx) \times \RR$. We now pick  $\theta(x,t)\in C_c^{\infty}(\RR^{n+1})$ such that $\spt(\theta) \subset 2B_i \times (-\infty,\mft-s)$. Since $\varphi-\phi = 0$ on $2B_i \times (-\infty,\mft-s)$, we see that $\avgs{{v_h\lsb{\chi}{[\mft-s,\mft+s]}}}{\theta}=\avgs{{[\varphi-\phi]_h\lsb{\chi}{[\mft-s,\mft+s]}}}{\theta}=0$. Thus we get
     \begin{equation*}\label{lemma3.8.0}
          \begin{array}{rcl}
               |v_h^i| & \apprle & \fiint_{2Q_i} \abs{[\varphi-\phi]_h\lsb{\chi}{[\mft-s,\mft+s]}- \avgs{{[\varphi-\phi]_h\lsb{\chi}{[\mft-s,\mft+s]}}}{\theta}} \ dz \\
               & \overset{\redlabel{3.80.a}{a}}{\apprle} & r_i \lbr \fiint_{2Q_i} |\nabla v_h|^q \lsb{\chi}{[\mft-s,\mft+s]}\ dz + \sup_{t_1,t_2 \in 2I_i\cap [\mft-s,\mft+s]} \left| \frac{\avgs{{v_h\lsb{\chi}{[\mft-s,\mft+s]}}}{\mu}(t_2)-\avgs{{v_h\lsb{\chi}{[\mft-s,\mft+s]}}}{\mu}(t_1)}{r_i}\right|^q \rbr^{\frac{1}{q}} \\
& \overset{\redlabel{3.80.b}{b}}{\apprle} & r_i \la. 
          \end{array}
     \end{equation*}
To obtain \redref{3.80.a}{a}, we made use of Lemma \ref{lemma_crucial_1} and to obtain \redref{3.80.b}{b}, we proceed similarly to how \eqref{S_1_2.1} was estimated. 

     \emph{Lateral Boundary Case $\frac34Q_i \cap  (B_{\rho}(\mfx)\times \RR)^c \neq \emptyset$:} In this case, using  Theorem \ref{poincare} and \descref{W4}{W4}, we get
     \begin{equation}
\label{lemma3.8.1}
 \begin{array}{ll}
  |v_h^i| & \apprle r_i \lbr \fiint_{2Q_i} \left| \frac{[\varphi-\phi]_h \lsb{\chi}{[\mft-s,\mft+s]}}{r_i} \right|^q \ dz \rbr^{\frac{1}{q}} \apprle r_i \lbr \fiint_{2Q_i} \left| {\nabla [\varphi-\phi]_h} \right|^q\lsb{\chi}{[\mft-s,\mft+s]} \ dz \rbr^{\frac{1}{q}} \apprle r_i \la. 
 \end{array}
\end{equation}
From  \eqref{lemma3.8.1} and \eqref{claim_bound}, we see that the lemma is proved  provided $v_h^j=0$.

\item[Now let us consider the case $v_h^i \neq 0$ and $v_h^j \neq 0$,] which implies $\frac34Q_i \subset B_{\rho}(\mfx)\times [-s,\infty)$ and $\frac34Q_j \subset  B\times [\mft-s,\infty)$. From the definition of $v_h^i$ in \eqref{lipschitz_extension_one}, triangle inequality and \descref{W12}{W12}, we get
\begin{equation}
 \label{3.29}
 \begin{array}{rl}
 |v_h^i - v_h^j| 
 & \apprle \frac{|\frac34Q_i|}{|\frac34Q_i \cap \frac34Q_j|}\fiint_{\frac34Q_i} \abs{v_h(z)\lsb{\chi}{[\mft-s,\mft+s]} - v_h^i} \ dz + \frac{|\frac34Q_j|}{|\frac34Q_i \cap \frac34Q_j|}\fiint_{ \frac34Q_j}\abs{v_h(z)\lsb{\chi}{[\mft-s,\mft+s]} - v_h^j} \ dz \\
 & \apprle \fiint_{\frac34Q_i} \abs{v_h(z)\lsb{\chi}{[\mft-s,\mft+s]} - v_h^i} \ dz + \fiint_{ \frac34Q_j} \abs{v_h(z)\lsb{\chi}{[\mft-s,\mft+s]} - v_h^j} \ dz.
 \end{array}
\end{equation}

Let us now estimate each of the terms in \eqref{3.29} as follows: we apply H\"older's inequality followed by  Lemma \ref{lemma_crucial_1} with $\al \in C_c^{\infty}\lbr\frac34B_i\rbr$ with $|\al(x)| \apprle \frac{1}{r_i^n}$ and $|\nabla \al(x)| \apprle \frac{1}{r_i^{n+1}}$ to get
\begin{equation}
 \label{3.30}
 \begin{array}{rcl}
 \fiint_{\frac34Q_i} | v_h(z)\lsb{\chi}{[\mft-s,\mft+s]} - v_h^i| \ dz 
 & = &  r_i \lbr \fiint_{\frac34Q_i} |\nabla v_h|^q \lsb{\chi}{[\mft-s,\mft+s]}\ dz \rbr^{\frac{1}{q}} \\
 && + r_i \lbr\sup_{t_1,t_2 \in \frac34I_i\cap[\mft-s,\mft+s]} \left|\frac{\avgs{{[\varphi-\phi]_h}}{\mu}(t_2) - \avgs{{[\varphi-\phi]_h}}{\mu}(t_1)}{r_i} \right|^q \rbr^{\frac{1}{q}}.
 \end{array}
\end{equation}
The first term on the right of \eqref{3.30} can be controlled using \descref{W4}{W4} and the second term can be controlled similarly as \eqref{S_1_2.1}. Thus we get
\[
\fiint_{\frac34Q_i} | v_h(z)\lsb{\chi}{[\mft-s,\mft+s]} - v_h^i| \ dz\apprle r_i \la.
\]
\end{description}
This completes the proof of the Lemma.
\end{proof}

Once we have the bounds in Lemma \ref{lemma3.7} and Lemma \ref{lemma3.8}, we can obtain the following important estimates:

\begin{lemma}
\label{lemma3.9}
 Given any  $z \in \elam^c$, we have $z \in \frac34Q_i$ for some $i \in \NN$. Then there holds
 \begin{equation}
  \label{3.34}
  |\nabla \vlh(z)| \leq C_{(n,p,q,\lamot,m_e)} \la.
 \end{equation}

\end{lemma}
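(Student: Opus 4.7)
The plan is to reduce the computation of $\nabla \vlh(z)$ to a finite sum of differences $v_h^j - v_h^i$ with $j\in A_i$, and then invoke Lemma \ref{lemma3.8} together with the gradient bound \descref{W9}{W9} on the partition of unity.

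First I would rewrite the extension. From \eqref{lipschitz_extension} we have
\[
\vlh(z) = v_h(z)\Bigl(1 - \sum_j \Psi_j(z)\Bigr) + \sum_j \Psi_j(z)\, v_h^j.
\]
Fix $z \in \elam^c$ with $z \in \frac34 Q_i$. By \descref{W8}{W8}, $\spt \Psi_j \subset \frac34 Q_j$, so $\Psi_j(z) = 0$ for $j \notin A_i$. Combining this with \descref{W10}{W10}, we get $\sum_j \Psi_j(z) = \sum_{j\in A_i} \Psi_j(z) = 1$ for $z \in \frac34 Q_i$. Consequently
\[
\vlh(z) \;=\; \sum_{j\in A_i} \Psi_j(z)\, v_h^j \;=\; v_h^i + \sum_{j\in A_i} \Psi_j(z)\,(v_h^j - v_h^i),
\]
where in the last step I used once more that $\sum_{j\in A_i}\Psi_j(z)=1$ on $\frac34Q_i$.

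Since the numbers $v_h^i$ and $v_h^j$ do not depend on $z$, differentiating in the spatial variable gives
\[
\nabla \vlh(z) \;=\; \sum_{j\in A_i} \nabla \Psi_j(z)\,(v_h^j - v_h^i).
\]
Now I would bound each factor. By \descref{W9}{W9}, $|\nabla \Psi_j(z)| \leq C/r_j$, and \descref{W5}{W5} yields $r_j \geq r_i/2$ for every $j \in A_i$. Lemma \ref{lemma3.8} provides $|v_h^j - v_h^i| \leq C_{(n,p,q,\lamot,m_e)} \min\{\rho, r_i\}\,\la \leq C r_i \la$. Finally, \descref{W13}{W13} bounds the cardinality $\# A_i \leq c(n)$. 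Assembling these three estimates,
\[
|\nabla \vlh(z)| \;\leq\; \sum_{j\in A_i} \frac{C}{r_j}\, C r_i \la \;\leq\; c(n)\cdot\frac{2C^2 r_i}{r_i}\,\la \;=\; C_{(n,p,q,\lamot,m_e)}\,\la,
\]
which proves \eqref{3.34}.

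The only nontrivial input is Lemma \ref{lemma3.8}; everything else is bookkeeping with the Whitney covering. The main conceptual step is the trick of subtracting the constant $v_h^i$ before differentiating, which is what converts the raw bound $|v_h^j|\apprle \rho\la$ from Lemma \ref{lemma3.7} (too weak by a factor of $\rho/r_i$) into the sharper bound on $|v_h^j - v_h^i|$ that exactly compensates the blow-up of $|\nabla \Psi_j|\approx 1/r_i$.
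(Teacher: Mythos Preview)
Your proof is correct and follows essentially the same approach as the paper: both use that $\sum_{j\in A_i}\Psi_j\equiv 1$ on $\frac34 Q_i$ (hence $\sum_{j\in A_i}\nabla\Psi_j=0$) to reduce $\nabla\vlh$ to $\sum_{j\in A_i}\nabla\Psi_j(v_h^j-v_h^i)$, and then combine \descref{W9}{W9}, \descref{W13}{W13}, and Lemma~\ref{lemma3.8}. Your presentation is slightly more explicit in invoking \descref{W5}{W5} to compare $r_j$ with $r_i$, but the argument is the same.
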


\begin{proof}
  We observe that $\sum_{j} \Psi_j(z) = \sum_{j: j\in A_i} \Psi_j(z)=1$ for any $z \in \elam^c$, which implies
  $\sum_j \nabla \Psi_j(z) =   0$  for all $z \in \elam^c$. Thus using \eqref{lipschitz_extension}  along with \descref{W9}{W9}, \descref{W13}{W13} and Lemma \ref{lemma3.8}, we get
  \begin{equation*}
   \label{3.36}
   \begin{array}{ll}
   |\nabla \vlh(z)| \leq   \sum_{j: j\in A_i} |\nabla \Psi_j(z)| \abs{ v_h^j - v_h^i} \apprle \la.
   \end{array} 
  \end{equation*}
 This completes the proof of the Lemma. 
\end{proof}

\subsection{Estimates on the derivative of \texorpdfstring{$\vlh$}.}
We will now mention some improved estimates which can be proved using H\"older's inequality along with the techniques from Lemma \ref{lemma3.9}.
\begin{lemma}
 \label{lemma3.10.1}
 Let $z \in \elam^c$ and $\ve \in (0,1]$ be any number, then $z \in \frac34Q_i$ for some $i \in \NN$ from \descref{W1}{W1}. There exists a constant $C =  C_{(n,p,q,\lamot,m_e)}$ such that the following holds:
 \begin{gather*}
  |\vlh(z)|  \leq C \fiint_{4Q_i}|v_h(\tz)| \lsb{\chi}{[\mft-s,\mft+s]}\ d\tz \leq  \frac{Cr_i\la}{\varepsilon} + \frac{C\varepsilon}{\la r_i} \fiint_{4Q_i}|v_h(\tz)|^2 \lsb{\chi}{[\mft-s,\mft+s]}\ d\tz, \label{lemma3.10_bound1}\\
  |\nabla \vlh(z)| \leq C \frac{1}{r_i} \fiint_{4Q_i}|v_h(\tz)| \lsb{\chi}{[\mft-s,\mft+s]}\ d\tz \leq  \frac{C \la}{\varepsilon} + \frac{C\varepsilon}{\la r_i^2} \fiint_{4Q_i}|v_h(\tz)|^2 \lsb{\chi}{[\mft-s,\mft+s]}\ d\tz.\label{lemma3.10_bound2}
 \end{gather*}
\end{lemma}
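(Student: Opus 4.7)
The plan is to exploit the partition-of-unity structure of $\vlh$ directly, bound its value and its gradient pointwise by an $L^1$-average of $v_h$ over $4Q_i$, and then split that average via Young's inequality to produce the quadratic bound. No new cross-sectional estimates are required; the work already done in Lemma~\ref{lemma3.7} and Lemma~\ref{lemma3.8}, together with the Whitney properties \descref{W5}{W5}, \descref{W11}{W11}, \descref{W13}{W13}, \descref{W14}{W14}, does the job.

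First I would prove the pointwise $L^1$-average bound for $|\vlh(z)|$. For $z\in\elam^c\cap\frac34Q_i$ we have $\vlh(z) = \sum_{j\in A_i}\Psi_j(z)\,v_h^j$ by \descref{W10}{W10}. From the very definition \eqref{lipschitz_extension_one}, regardless of whether $v_h^j$ is zero or not,
\[
|v_h^j| \leq \frac{1}{\|\Psi_j\|_{L^1(\frac34Q_j)}}\iint_{\frac34Q_j}|v_h(\tz)|\,\lsb{\chi}{[\mft-s,\mft+s]}\,d\tz \apprle \fiint_{\frac34Q_j}|v_h(\tz)|\,\lsb{\chi}{[\mft-s,\mft+s]}\,d\tz.
\]
Using \descref{W5}{W5} (so $|Q_j|\simeq |Q_i|$) together with \descref{W14}{W14} (which gives $\frac34Q_j\subset 4Q_i$), the right-hand side is controlled by $\fint_{4Q_i}|v_h(\tz)|\lsb{\chi}{[\mft-s,\mft+s]}\,d\tz$. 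Summing over the finitely many indices $j\in A_i$ (bounded by \descref{W13}{W13}) and using $|\Psi_j|\leq 1$ yields the first inequality of the $|\vlh|$ estimate.

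Next I would treat $|\nabla\vlh(z)|$. Since $\sum_{j\in A_i}\nabla\Psi_j(z)=0$ on $\elam^c$, I can insert $v_h^i$ for free to write $\nabla\vlh(z)=\sum_{j\in A_i}\nabla\Psi_j(z)(v_h^j-v_h^i)$. Using $|\nabla\Psi_j|\apprle 1/r_j \apprle 1/r_i$ from \descref{W9}{W9} and \descref{W5}{W5}, together with the triangle inequality $|v_h^j-v_h^i|\leq |v_h^j|+|v_h^i|$ and the $L^1$-bound established above for each of the two summands, I obtain
\[
|\nabla \vlh(z)| \apprle \frac{1}{r_i}\fiint_{4Q_i}|v_h(\tz)|\,\lsb{\chi}{[\mft-s,\mft+s]}\,d\tz,
\]
which is the first inequality of the $|\nabla\vlh|$ estimate.

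Finally, both second inequalities are a one-line application of Young's inequality $ab\leq\frac{a^2}{2\eta}+\frac{\eta b^2}{2}$. For $|\vlh|$, taking $a=1$, $b=|v_h|$ with weight $\eta=\varepsilon/(r_i\la)$ gives $|v_h|\leq \frac{r_i\la}{2\varepsilon}+\frac{\varepsilon}{2r_i\la}|v_h|^2$; averaging over $4Q_i$ produces the claimed bound. For $|\nabla\vlh|$, the same trick with $\eta=2\varepsilon/(\la r_i^2)$ applied to $\frac{|v_h|}{r_i}$ yields $\frac{|v_h|}{r_i}\apprle \frac{\la}{\varepsilon}+\frac{\varepsilon}{\la r_i^2}|v_h|^2$, and averaging gives the desired estimate. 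I do not foresee a real obstacle here; the only point that needs a little care is the uniform control of $|v_h^j|$ in the boundary/initial case where $v_h^j=0$, but this is automatic since the zero value trivially satisfies the $L^1$-average bound.
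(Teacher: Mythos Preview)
Your proposal is correct and follows essentially the approach the paper indicates (the paper omits the proof, noting only that it ``can be proved using H\"older's inequality along with the techniques from Lemma~\ref{lemma3.9}''); your argument fleshes this out cleanly via the partition-of-unity structure, the Whitney properties \descref{W5}{W5}, \descref{W13}{W13}, \descref{W14}{W14}, and Young's inequality for the quadratic splitting. The only trivial slip is the stated value of $\eta$ in the gradient step, but the correct choice $\eta=\varepsilon/(2\la)$ (or simply dividing the $|v_h|$ estimate by $r_i$) gives the claimed bound immediately.
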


\begin{lemma}
 \label{lemma3.10.2}
 Let $z \in \elam^c$ and $\ve \in (0,1]$ be any number, then $z \in \frac34Q_i$ for some $i \in \NN$ from \descref{W1}{W1}. There exists a constant $C =  C_{(n,p,q,\lamot,m_e)}$ such that the following holds:
 \begin{gather}
  |\vlh(z)| \leq C \lbr \min\{ \rho, r_i\} \la + |v_h^i| \rbr \leq  C \lbr \frac{ r_i\la}{\ve} + \frac{\ve}{r_i \la} |v_h^i|^2 \rbr,  \label{lemma3.10_bound3}\\
  |\nabla \vlh(z)| \leq C \frac{\la}{\ve},\label{lemma3.10_bound4}\\
   |\pa_t \vlh(z)|  \leq C\frac{1}{\la^{2-p} r_i^2} \fiint_{4Q_i} |v_h(\tz)| \lsb{\chi}{[\mft-s,\mft+s]}\ d\tz, \nonumber \\
 |\pa_t \vlh(z)|  \leq C \frac{1}{\la^{2-p} r_i^2} \min\{r_i,\rho\} \la. \label{lemma3.11.bound2}
 \end{gather}
\end{lemma}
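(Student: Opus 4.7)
\textbf{Proof Proposal for Lemma \ref{lemma3.10.2}.}

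The plan is to exploit the partition of unity structure of $\vlh$ on $\elam^c$ together with the already established difference estimates from Lemma \ref{lemma3.8} and the Whitney covering properties, complementing them with a one-step Young's inequality to produce the $\ve$-weighted bounds. Since $\sum_{j} \Psi_j(z) = \sum_{j \in A_i} \Psi_j(z) = 1$ for every $z \in \elam^c \cap \frac34 Q_i$, I will first rewrite the extension \eqref{lipschitz_extension} in the form
\[
\vlh(z) = \sum_{j \in A_i} \Psi_j(z)\, v_h^j = v_h^i + \sum_{j \in A_i} \Psi_j(z)\lbr v_h^j - v_h^i\rbr.
\]
This is the key algebraic identity that will drive all three bounds.

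For \eqref{lemma3.10_bound3}, I would apply Lemma \ref{lemma3.8} to each difference $|v_h^j - v_h^i|$ together with \descref{W13}{W13} to obtain $|\vlh(z)| \leq |v_h^i| + C \min\{\rho, r_i\}\la$, which is the first inequality. The second follows by dominating $\min\{\rho,r_i\} \leq r_i$ and applying Young's inequality to split $|v_h^i|$ as $|v_h^i| \leq \frac{\ve}{2 r_i \la}|v_h^i|^2 + \frac{r_i \la}{2\ve}$. The gradient bound \eqref{lemma3.10_bound4} is in fact a direct consequence of Lemma \ref{lemma3.9}, which already gives $|\nabla \vlh(z)| \leq C \la$; the factor $1/\ve$ is only for bookkeeping compatibility since $\ve \in (0,1]$.

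The time-derivative bounds are the most interesting part but still follow the same pattern. Since $v_h^j$ is constant in $z$, differentiating the identity above in $t$ and using $\sum_{j \in A_i} \pa_t \Psi_j(z) \equiv 0$ on $\frac34 Q_i$, I obtain
\[
\pa_t \vlh(z) = \sum_{j \in A_i} \pa_t \Psi_j(z)\lbr v_h^j - v_h^i\rbr.
\]
Applying \descref{W9}{W9} (interpreted with the intrinsic time scale $\ka r_j^2 = \la^{2-p}r_j^2$) together with \descref{W5}{W5} and \descref{W13}{W13} yields the prefactor $\frac{1}{\la^{2-p}r_i^2}$. Plugging in Lemma \ref{lemma3.8} directly delivers the cleaner bound \eqref{lemma3.11.bound2}. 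For the integral form, I would instead estimate each $|v_h^j - v_h^i|$ by $|v_h^j| + |v_h^i| \leq C\fiint_{\frac34Q_j}|v_h|\lsb{\chi}{[\mft-s,\mft+s]}\,d\tz + C\fiint_{\frac34Q_i}|v_h|\lsb{\chi}{[\mft-s,\mft+s]}\,d\tz$ using the definition \eqref{lipschitz_extension_one}, and then absorb $\frac34 Q_j \subset 4Q_i$ via \descref{W14}{W14} to collapse everything into a single integral over $4Q_i$.

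The only technical point that requires some care is the subtle issue when either $v_h^i = 0$ or $v_h^j = 0$ (i.e., when the associated Whitney cylinders exit $B_{\rho}(\mfx) \times [\mft-s,\infty)$); these cases are already handled inside the proof of Lemma \ref{lemma3.8}, so I can simply invoke its conclusion as a black box here. No further integration by parts or use of Lemma \ref{lemma_crucial_2} is needed since all the nontrivial PDE information has already been packaged into Lemma \ref{lemma3.8}. Hence the proof reduces to bookkeeping with the Whitney partition and a single Young's inequality, and I expect no serious obstacle.
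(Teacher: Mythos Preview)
Your proposal is correct and matches the paper's approach. The paper itself does not give a detailed proof of this lemma, only the remark that it ``can be proved using H\"older's inequality along with the techniques from Lemma~\ref{lemma3.9}''; your argument is precisely a faithful execution of that hint, using the partition-of-unity identity $\vlh(z) = v_h^i + \sum_{j\in A_i}\Psi_j(z)(v_h^j - v_h^i)$, Lemma~\ref{lemma3.8}, the Whitney properties \descref{W5}{W5}, \descref{W9}{W9}, \descref{W13}{W13}, \descref{W14}{W14}, and Young's inequality.
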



\subsection{Some more properties of \texorpdfstring{$\vlh$}.} 

\begin{lemma}
\label{lemma3.12}
For any $\vartheta \geq 1$,  we have the following bound:
 \begin{equation*}
  \label{3.56}
  \iint_{\ors\setminus\elam } |\vlh(z)|^{\vartheta} \ dz \apprle_{(n,p,q,\lamot,m_e)} \iint_{\ors\setminus\elam } |v_h(z)|^{\vartheta} \lsb{\chi}{[\mft-s,\mft+s]}\ dz.
 \end{equation*}
\end{lemma}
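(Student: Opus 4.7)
}
The plan is to exploit the fact that on $\elam^c$ the Lipschitz truncation $\vlh$ is literally a partition-of-unity average of the local constants $v_h^i$, so the $L^\vartheta$-norm of $\vlh$ is dominated by a weighted sum of $|v_h^i|^\vartheta$, which by Jensen's inequality is itself controlled by the local $L^\vartheta$-average of $v_h$. The finite overlap of the Whitney cubes then lets us reassemble these local contributions into a global integral. The main bookkeeping task is to make sure that, because of the truncation by $\lsb{\chi}{[\mft-s,\mft+s]}$ in the definition \eqref{lipschitz_extension_one} and the vanishing of $v_h$ on the lateral boundary, each Whitney cube contributing to the estimate sits inside (or has $v_h$ supported within) $Q_{\rho,s}(\mfz)$.

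First, fix any $z \in \elam^c$. By \descref{W3}{W3} there exists $i$ with $z \in \tfrac12 Q_i$, hence by \descref{W10}{W10} we have $\sum_{j \in A_i}\Psi_j(z)=1$. Using this in the definition \eqref{lipschitz_extension} I rewrite
\[
\vlh(z) \;=\; v_h(z)-\sum_{j\in A_i}\Psi_j(z)(v_h(z)-v_h^j)\;=\;\sum_{j\in A_i}\Psi_j(z)\,v_h^j .
\]
Since $\vartheta\ge 1$ and $\{\Psi_j(z)\}_{j\in A_i}$ form a (finite) convex combination by \descref{W13}{W13}, Jensen's inequality yields
\[
|\vlh(z)|^{\vartheta}\;\le\;\sum_{j\in A_i}\Psi_j(z)\,|v_h^j|^{\vartheta}\;\le\;\sum_{j}\Psi_j(z)\,|v_h^j|^{\vartheta},
\]
where the last sum is pointwise finite by \descref{W7}{W7}. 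Integrating over $Q_{\rho,s}(\mfz)\setminus\elam$ and using $\Psi_j\le \lsb{\chi}{\frac34 Q_j}$ from \descref{W8}{W8} gives
\[
\iint_{Q_{\rho,s}(\mfz)\setminus\elam}|\vlh|^{\vartheta}\,dz\;\le\;\sum_{j}|v_h^j|^{\vartheta}\,|\tfrac34 Q_j|.
\]

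Next, only the indices $j$ with $\tfrac34 Q_j\subset B_\rho(\mfx)\times[\mft-s,\infty)$ contribute, since by \eqref{lipschitz_extension_one} the other $v_h^j$ vanish. For such $j$, $v_h^j$ is the $\Psi_j$-average of $v_h\lsb{\chi}{[\mft-s,\mft+s]}$ on $\tfrac34 Q_j$, and $\|\Psi_j\|_{L^1(\frac34 Q_j)}\ge |\tfrac12 Q_j|\gtrsim |\tfrac34 Q_j|$ by \descref{W8}{W8}; hence Jensen's inequality applied to the probability measure $\Psi_j\,dz/\|\Psi_j\|_{L^1}$ produces
\[
|v_h^j|^{\vartheta}\,|\tfrac34 Q_j|\;\apprle_{n}\;\iint_{\frac34 Q_j}|v_h(z)|^{\vartheta}\,\lsb{\chi}{[\mft-s,\mft+s]}(z)\,dz.
\]
Because $\tfrac34 Q_j\subset B_\rho(\mfx)\times\RR$ and the integrand carries the factor $\lsb{\chi}{[\mft-s,\mft+s]}$, the domain of integration effectively lies inside $Q_{\rho,s}(\mfz)$. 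Since $\tfrac34 Q_j\subset\elam^c$ by \descref{W4}{W4}, summing in $j$ and invoking the finite-overlap property \descref{W7}{W7} yields
\[
\sum_j\iint_{\frac34 Q_j}|v_h|^{\vartheta}\lsb{\chi}{[\mft-s,\mft+s]}\,dz\;\apprle_{n}\;\iint_{Q_{\rho,s}(\mfz)\setminus\elam}|v_h(z)|^{\vartheta}\,\lsb{\chi}{[\mft-s,\mft+s]}\,dz,
\]
which is the desired inequality. I expect the only mild subtlety will be confirming that the excess time beyond $\mft+s$ in a Whitney cube does not contribute (handled by $\lsb{\chi}{[\mft-s,\mft+s]}$) and that the lateral vanishing of $v_h$ together with the restriction in \eqref{lipschitz_extension_one} lets us discard the cubes that protrude outside $B_\rho(\mfx)$; both are immediate from the construction.
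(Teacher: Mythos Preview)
Your proof is correct and follows essentially the same approach as the paper: expand $\vlh$ on $\elam^c$ as the convex combination $\sum_j \Psi_j v_h^j$, apply Jensen's inequality, control $|v_h^j|^\vartheta$ by the local $L^\vartheta$-average of $v_h\lsb{\chi}{[\mft-s,\mft+s]}$ on the Whitney cube, and reassemble via the bounded-overlap property \descref{W7}{W7}. Your organization is in fact slightly cleaner than the paper's, since you work pointwise and sum only over $j$, whereas the paper first localizes to $\tfrac34 Q_i$, sums over $j\in A_i$, and uses \descref{W14}{W14} to push everything into $4Q_i$ before summing over $i$; both routes arrive at the same estimate with the same dependence on constants.
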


\begin{proof}
 Since $\elam^c$ is covered by Whitney cylinders (see Lemma \ref{whitney_decomposition}), let us pick some $i \in \NN$ and consider the corresponding parabolic Whitney cylinder. Using the construction from \eqref{lipschitz_extension} along with \descref{W5}{W5}, \descref{W9}{W9} and \descref{W13}{W13}, we get
 \begin{equation}
 \label{3.57}
   \iint_{\frac34Q_i} |\vlh(z)|^{\vartheta} \ dz  \apprle \sum_{j:j \in A_i} \iint_{\frac34Q_i} \Psi_j(z)^{\vartheta} |v_h^j|^{\vartheta} \ dz
    \apprle \iint_{4Q_i} |v_h(z)|^{\vartheta} \lsb{\chi}{[\mft-s,\mft+s]}\ dz. 
 \end{equation}

 Summing \eqref{3.57} over all $i \in \NN$ and making use of  \descref{W4}{W4} and \descref{W7}{W7}, we get
\begin{equation*}
\begin{array}{ll}
 \iint_{\ors	\setminus \elam} |\vlh(z) |^{\vartheta} \ dz 
  \apprle \sum_i \iint_{4Q_i } |v_h(z)|^{\vartheta} \lsb{\chi}{[\mft-s,\mft+s]} \ dz 
  \apprle \iint_{\ors\setminus \elam} |v_h(z)|^{\vartheta} \lsb{\chi}{[\mft-s,\mft+s]}\ dz.
 \end{array}
\end{equation*}
This proves the Lemma.
\end{proof}

\begin{lemma}
\label{lemma3.14}
 For any $0< \vartheta \leq q$ with $q$ defined as in \ref{expon_lip_lem}, there holds 
 \begin{equation*}
  \iint_{\ors \setminus \elam} |\pa_t \vlh(z)  (\vlh(z) - v_h(z))|^{\vartheta} \ dz \apprle_{(n,p,q,\lamot,m_e, \vartheta)} \la^{\vartheta p} |\RR^{n+1} \setminus \elam|.
 \end{equation*}
\end{lemma}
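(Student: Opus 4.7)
The plan is to decompose the domain of integration using the Whitney covering from Lemma \ref{whitney_decomposition}, bound $|\pa_t \vlh|$ pointwise on each Whitney cylinder, and control $|\vlh - v_h|$ in $L^{\vartheta}(\tfrac{3}{4}Q_i)$ via Poincaré-type arguments. Concretely, since $\bigcup_i \tfrac{1}{2}Q_i = \elam^c$ with finite overlap \descref{W6}{W6}--\descref{W7}{W7}, it suffices to show the bound
\[
\iint_{\frac{3}{4}Q_i \cap \ors} |\pa_t \vlh (\vlh - v_h)|^{\vartheta} \, dz \apprle \la^{p\vartheta} |Q_i|
\]
for each $i$ and then sum. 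The pointwise bound on $\pa_t \vlh$ is already in hand: Lemma \ref{lemma3.10.2} (second estimate on $\pa_t \vlh$) gives $|\pa_t \vlh(z)| \apprle \la^{p-1}/r_i$ on $\tfrac{3}{4}Q_i$, regardless of whether $r_i \lessgtr \rho$.

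The main work is to bound $\iint_{\frac{3}{4}Q_i} |\vlh - v_h|^{\vartheta} \, dz$. On $\elam^c$ we have $\sum_{j\in A_i} \Psi_j \equiv 1$ by \descref{W10}{W10}, hence on $\tfrac{3}{4}Q_i$
\[
\vlh(z) - v_h(z) = \sum_{j \in A_i} \Psi_j(z) \bigl(v_h^j - v_h(z)\bigr) = (v_h^i - v_h(z)) + \sum_{j \in A_i} \Psi_j(z)(v_h^j - v_h^i),
\]
so by the triangle inequality and \descref{W13}{W13}, $|\vlh - v_h| \apprle |v_h - v_h^i| + \max_{j \in A_i} |v_h^j - v_h^i|$. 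The second term is controlled by $r_i \la$ via Lemma \ref{lemma3.8}. For the first term, I would apply Jensen's inequality (using $\vartheta \leq q$) and then the parabolic Poincaré inequality from Lemma \ref{lemma_crucial_1} with $\rho = \mu \chi_{\tfrac{3}{4}I_i}$:
\[
\fiint_{\frac{3}{4}Q_i} |v_h - v_h^i|^{q} \, dz \apprle r_i^q \fiint_{\frac{3}{4}Q_i} |\nabla v_h|^q \lsb{\chi}{[\mft-s,\mft+s]} \, dz + r_i^q \sup_{t_1,t_2 \in \frac{3}{4}I_i \cap [\mft-s,\mft+s]} \left|\frac{\avgs{v_h}{\mu}(t_2) - \avgs{v_h}{\mu}(t_1)}{r_i}\right|^{q}.
\]
The gradient average is $\apprle \la^q$ by \descref{W4}{W4} and the definition of $\elam$; the time-oscillation sup is bounded by $\la^q$ using Lemma \ref{lemma_crucial_2} combined with \descref{W1}{W1}, \descref{W4}{W4} and $\ka = \la^{2-p}$, exactly as in estimate \eqref{S_1_2.1}. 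Taking $\vartheta/q$-th power, this yields $\fiint_{\frac{3}{4}Q_i} |v_h - v_h^i|^{\vartheta} \, dz \apprle (r_i \la)^{\vartheta}$, and therefore $\iint_{\frac{3}{4}Q_i} |\vlh - v_h|^{\vartheta} \, dz \apprle |Q_i|(r_i \la)^{\vartheta}$.

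Combining the two estimates gives
\[
\iint_{\frac{3}{4}Q_i \cap \ors} |\pa_t \vlh (\vlh - v_h)|^{\vartheta} \, dz \apprle \lbr \frac{\la^{p-1}}{r_i} \rbr^{\vartheta} \cdot (r_i \la)^{\vartheta} |Q_i| = \la^{p\vartheta} |Q_i|.
\]
Summing over $i$ and using $\sum_i |Q_i| \apprle \sum_i |\tfrac{1}{2}Q_i| \apprle |\RR^{n+1} \setminus \elam|$ (from \descref{W3}{W3} and the finite-overlap property \descref{W7}{W7}) yields the desired bound.

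\textbf{Main obstacle.} The delicate step is verifying that both the spatial gradient term and the time-oscillation sup on the right of the parabolic Poincaré inequality deliver $\la^q$ and not a worse constant. The gradient term is immediate from the defining property of $\elam^c$ via the maximal function (using Jensen to handle the Steklov averaging), but the time-oscillation requires invoking Lemma \ref{lemma_crucial_2} with a carefully chosen test function on $\tfrac{3}{4}Q_i$, and controlling the resulting $|\nabla\varphi|^{p-1} + |\nabla\phi|^{p-1} + |\bff|^{p-1}+|\vec g|^{p-1}$ averages by $\la^{p-1}$ (again thanks to \descref{W4}{W4} and the presence of these quantities inside $g$). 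This is precisely why $g$ was defined as an $L^q$ maximal function of all relevant quantities and why the scaling $\ka = \la^{2-p}$ is chosen so that $\ka \la^{p-1} r_i = \la r_i$.
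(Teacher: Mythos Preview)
Your proposal is correct and follows essentially the same route as the paper: decompose over the Whitney cylinders, bound $|\partial_t \vlh|$ by $\la^{p-1}/r_i$ via \eqref{lemma3.11.bound2}, and control $\iint_{\frac34 Q_i}|\vlh - v_h|^{\vartheta}$ by $(r_i\la)^{\vartheta}|Q_i|$ through the parabolic Poincar\'e argument, then sum using \descref{W7}{W7}. The paper's write-up is terser (it cites ``Theorem~\ref{poincare} along with \descref{W4}{W4}'' for the oscillation bound, which should be read as the argument around \eqref{3.30} using Lemma~\ref{lemma_crucial_1} and Lemma~\ref{lemma_crucial_2}), but the substance is identical to what you outline.
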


\begin{proof}
From \descref{W3}{W3}, we see that 
 $\ors \setminus \elam\subset \bigcup_{i \in \ZZ} 4Q_i$, thus, for a given $i \in \NN$, let us define the following: 
 \begin{equation*}
  J_i:= \iint_{\frac34Q_i} |\pa_t \vlh(z)  (\vlh(z) - v_h(z))|^{\vartheta} \lsb{\chi}{\ors}\ dz .
 \end{equation*}

  Making use of  \eqref{lemma3.11.bound2} and H\"older's inequality (recall $\ga = \la^{2-p}$), we get
  \begin{equation}
  \label{3.66}
   \begin{array}{rcl}
J_i  &\apprle & \lbr \frac{1}{\la^{2-p} r_i^2} r_i \la \rbr^{\vartheta} \iint_{\frac34Q_i} |\vlh(z)\lsb{\chi}{\ors} - v_h(z)\lsb{\chi}{\ors}|^{\vartheta} \ dz \\
& \overset{\redlabel{4.42.a}{a}}{\apprle}& \lbr \frac{1}{\la^{2-p} r_i^2} r_i \la \rbr^{\vartheta} \sum_{j \in A_i}\iint_{\frac34Q_i}  |v_h(z)\lsb{\chi}{\ors} - v_h^j|^{\vartheta} \ dz \\
 & \overset{\redlabel{4.42.b}{b}}{\apprle} & \la^{\vartheta p} \ |\frac34Q_i|.
   \end{array}
  \end{equation}
To obtain \redref{4.42.a}{a}, we made use of \eqref{lipschitz_extension}, \descref{W9}{W9} and \descref{W10}{W10} and to obtain \redref{4.42.b}{b}, we applied Theorem \ref{poincare} along with \descref{W4}{W4}.

Summing \eqref{3.66} over all $i \in \NN$ and making use of \descref{W7}{W7} completes the proof of the lemma.
\end{proof}

\subsection{Proof of the Lipschitz continuity of \texorpdfstring{$\vlh$}.}
We shall now prove the Lipschitz continuity of $\vlh$ on $\mch := \RR^n \times [\mft-s,\mft+s]$. 

\begin{lemma}
\label{lemma3.15}
The function $\vlh$ from  \eqref{lipschitz_extension} is $C^{0,1}(\mch)$ with respect to the parabolic metric given in Definition \eqref{par_met}.
\end{lemma}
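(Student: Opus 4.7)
The plan is to invoke Lemma~\ref{metric_lipschitz} with $\mathcal{D}=\mch$ and the intrinsic scaling $\ga=\la^{2-p}$, since then the induced metric coincides with the parabolic metric of Definition~\ref{parabolic_metric} on which the Whitney decomposition of $\elam^c$ (Lemma~\ref{whitney_decomposition}) is built. The task then reduces to proving, for every $z_0\in \mch$ and every $r>0$,
\[
\fiint_{Q\cap\mch}\left|\frac{\vlh(\tz)-\avgs{\vlh}{Q\cap\mch}}{r}\right|\,d\tz\ \leq\ C\,\la,\qquad Q := Q_{r,\,\la^{2-p}r^2}(z_0),
\]
with $C=C_{(n,p,q,\lamot,m_e)}$. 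Writing $\vlh=v_h+(\vlh-v_h)$ and recalling from \eqref{lipschitz_extension} that $\vlh-v_h=-\sum_i\Psi_i(v_h-v_h^i)$ is supported in $\elam^c$, it suffices to control the two averages
\[
J_1:=\fiint_{Q\cap\mch}\left|\frac{v_h(\tz)-c}{r}\right|d\tz,\qquad J_2:=\fiint_{Q\cap\mch}\frac{|\vlh(\tz)-v_h(\tz)\lsb{\chi}{\ors}|}{r}\,d\tz
\]
separately for some well-chosen constant $c$.

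The main case is $Q\cap\elam\neq\emptyset$. Pick $z^\ast\in Q\cap\elam$; then $g(z^\ast)\leq\la$ and the definition of the strong maximal function $\mm$ gives $\fiint_{Q'}(|\nabla v|^q+|\nabla\varphi|^q+|\nabla\phi|^q+|\vec f|^q+|\vec g|^q)\lsb{\chi}{\ors}\,d\tz\leq \la^q$ for every parabolic cylinder $Q'\ni z^\ast$ of scale comparable to $Q$. Applying Lemma~\ref{lemma_crucial_1} with a bump $\mu\in C_c^\infty(B_r)$ and controlling the time oscillation of $\avgs{[\varphi-\phi]_h}{\mu}(\cdot)$ through Lemma~\ref{lemma_crucial_2}, whose right-hand side is estimated by the same maximal function and multiplied by the time-scale factor $\la^{2-p}r^2$, one obtains $J_1\leq C\la$. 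For $J_2$, observe that any Whitney cube $Q_i$ meeting $Q$ must satisfy $r_i\apprle r$: by \descref{W2}{W2}, $16r_i=d_{\la}(z_i,\elam)\leq d_{\la}(z_i,z^\ast)\leq d_{\la}(z_i,Q)+\diam_{\la} Q\apprle r+r_i$. Combining the averaged Whitney-scale bounds already established in the proofs of Lemma~\ref{lemma3.7} and Lemma~\ref{lemma3.8} (which yield $\fiint_{\frac34 Q_i}|v_h-v_h^i|\,d\tz\apprle r_i\la$) with the bounded overlap property~\descref{W7}{W7}, we sum over the relevant Whitney cubes and conclude $J_2\leq C\la$.

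When instead $Q\cap\elam=\emptyset$, let $k_0\geq 1$ be the least integer for which $2^{k_0}Q\cap\elam\neq\emptyset$. On $Q$ the function $\vlh=\sum_i\Psi_iv_h^i$ is smooth, and every Whitney cube $Q_i$ touching $Q$ has $r_i\simeq 2^{k_0}r\simeq d_{\la}(z_0,\elam)\geq r$. The spatial derivative is then pointwise bounded by Lemma~\ref{lemma3.9}, $|\nabla\vlh|\leq C\la$, while Lemma~\ref{lemma3.10.2} gives $|\pa_t\vlh|\leq C\la^{p-1}/r_i$; integrating over $Q$ yields
\[
\mathrm{osc}_Q(\vlh) \ \leq\ C\,\la\,r + C\,\frac{\la^{p-1}}{r_i}\cdot\la^{2-p}r^2 \ \leq\ C\,r\,\la,
\]
since $r^2/r_i\leq r$. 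Alternatively, one may reduce this regime to the previous one by working on $2^{k_0}Q$ and transferring the mean-oscillation bound back to $Q$ via \descref{W7}{W7}.

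The principal obstacle overcome by Lemma~\ref{metric_lipschitz} is that $\pa_t\vlh$ admits no uniform pointwise bound near $\pa\elam$, where the Whitney radii $r_i$ may be arbitrarily small and the bound in Lemma~\ref{lemma3.10.2} degenerates. Only an averaged control is available in that regime, and it is precisely Lemma~\ref{lemma_crucial_2}, which trades a time-derivative estimate for a divergence-form spatial one through the weak formulation of the PDE, that supplies the correct $\la$-scaling in the time direction after pairing with the intrinsic scale $\la^{2-p}r^2$ dictated by the Whitney geometry.
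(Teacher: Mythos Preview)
Your approach---invoking the Campanato criterion of Lemma~\ref{metric_lipschitz}, splitting according to whether $Q$ meets $\elam$, decomposing into the $v_h$-oscillation and the correction $\vlh-v_h$, and exploiting the Whitney-scale comparison $r_i\apprle r$---is exactly the paper's. The use of Lemmas~\ref{lemma_crucial_1} and~\ref{lemma_crucial_2} for $J_1$, and the pointwise derivative bounds for the case $Q\subset\elam^c$, also match.

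There is one genuine omission. Your estimates for $J_1$ and $J_2$ implicitly require $|Q\cap\mch|\apprge|Q|$, which fails when $2Q$ crosses both time boundaries of the strip $\mch$ and $\ka r^2>s$. In that regime the Whitney sum behind $J_2$ is not controlled (the denominator $|Q\cap\mch|=|B_1|r^n\cdot 2s$ is too small relative to $\sum_i|Q_i|$), and Lemma~\ref{lemma_crucial_2} is not directly applicable for $J_1$. The paper handles this degenerate case by abandoning the oscillation decomposition altogether and using instead the crude pointwise bound $|\vlh|\apprle\rho\la$ from Lemma~\ref{lemma3.7} on $\elam^c$ together with $\|v_h\|_{L^1(\ors)}$ on $\elam$, observing that $(\ka/s)^{n/2}$ absorbs the mismatch. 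A smaller point in the same spirit: when $Q\nsubseteq B_\rho(\mfx)\times\RR$, the bump $\mu$ cannot be supported inside $B_\rho(\mfx)$, so Lemma~\ref{lemma_crucial_2} is unavailable; the paper then uses the zero lateral boundary data of $v_h$ and Theorem~\ref{poincare} directly for $J_1$. Both fixes are routine, but the large-$r$ case does require an idea (the $L^\infty$ bound) not present in your sketch.
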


\begin{proof} 
Let us consider a parabolic cylinder $Q_{r}(z) := Q_{r, \ka r^2} (z) := Q$ for some $z \in \mch$ and $r>0$ (recall $\ka = \la^{2-p}$).  To prove the Lemma, we make use of Lemma \ref{metric_lipschitz} and prove the following bound:
 \begin{equation*}
 \label{bound_I_r}
I_r(z) :=   \fiint_{Q \cap \mch} \left|\frac{\vlh(\tz) - \avgs{\vlh}{Q\cap\mch}}{r}\right| \ d\tz \leq {o}(1),
 \end{equation*}
where ${o}(1)$ denotes a constant independent of $z \in \mch$ and $r>0$ only. We will split the proof into several subcases and proceed as follows:

\begin{description}
\item[Case $2Q \subset \elam^c$:]  In this case, from \descref{W3}{W3}, we see that $z \in \frac34Q_i$ for some $i \in \NN$. From the construction in \eqref{lipschitz_extension}, we see that $\vlh\in C^{\infty}(\elam^c)$ which combined with the mean value theorem gives
  \begin{equation*}
   \label{3.71}
I_r(z)  \apprle   \frac{1}{r} \fiint_{Q \cap \mch} \fiint_{Q \cap \mch}  |\vlh(\tz_1) - \vlh(\tz_2)| \ d\tz_1 \ d\tz_2
 \apprle  \sup_{\tz \in Q \cap \mch}\lbr  |\nabla\vlh(\tz)| + \la^{2-p} r |\pa_t \vlh(\tz)|\rbr .
  \end{equation*}
  Let us pick some  $\tz_0 \in 2Q \subset \elam^c$,  then $\tz_0 \in Q_j$ for some $j \in \NN$. Thus we can make use of \eqref{3.34} and \eqref{lemma3.11.bound2} to get
  \begin{equation}
  \label{3.72}
   |\nabla\vlh(\tz_0)| + \la^{2-p} r |\pa_t \vlh(\tz_0)| \apprle \la + \la^{2-p} r \frac{1}{\la^{2-p} r_j^2} r_j \la.
  \end{equation}

  In \eqref{3.72}, we need to understand the relation between $r_j$ and $r$. To this end, from $2Q \subset \elam^c$, we see that 
\begin{equation}
 \label{3.74}
 r \leq  d_{\la} (\tz_0, \elam) \leq d_{\la} (\tz_0, z_j) + d_{\la} (z_j , \elam) \leq r_j + 16r_j = 17r_i. 
\end{equation}

Combining \eqref{3.72} and \eqref{3.74}, we get
\begin{equation*}
\label{3.73}
|\nabla\vlh(\tz_0)| + \la^{2-p} r |\pa_t \vlh(\tz_0)| \apprle \la.
\end{equation*}

\item[Case $2Q \nsubseteq \elam^c$:] In this case, we shall split the proof into three subcases:
\begin{description}[leftmargin=*]
\item[Subcase $2Q \subset \RR^n \times {(-\infty,\mft+s]}$ or  $2Q \subset \RR^n \times [\mft-s,\infty)$:]
In this situation, it is easy to see that the following holds:
\begin{equation}
  \label{3.70}
  |Q \cap \mch| \apprge |Q|. 
 \end{equation}
 We apply   triangle inequality and estimate $I_r(z)$ by 
 \begin{equation}
  \label{3.81}
  \begin{array}{ll}
   I_r(z) & \leq \fiint_{Q \cap \mch} \left| \frac{\vlh(\tz) - v_h(\tz)}{r}\right|+ \left| \frac{v_h(\tz) - \avgs{v_h}{Q\cap \mch}}{r}\right| + \left| \frac{\avgs{v_h}{Q\cap \mch} - \avgs{\vlh}{Q \cap \mch}}{r}\right| \ d\tz \\
   & \leq 2J_1 + J_2,
  \end{array}
 \end{equation}
 where we have set
 \begin{gather}
  J_1:= \fiint_{Q \cap \mch} \left| \frac{\vlh(\tz) - v_h(\tz)}{r}\right| \ d\tz  \txt{and} J_2 := \fiint_{Q \cap \mch} \left| \frac{v_h(\tz) - \avgs{v_h}{Q\cap \mch}}{r}\right|\ d\tz.\label{def_J_1_2}
 \end{gather}
We now estimate each of the terms of \eqref{def_J_1_2} as follows:
\begin{description}[leftmargin=*]
\item[Estimate for $J_1$:]  From \eqref{lipschitz_extension}, we get
\begin{equation}
\label{3.82}
\begin{array}{ll}
J_1 &\apprle \sum_{i\in \NN} \frac{1}{|Q\cap\mch|} \iint_{Q \cap\mch\cap \frac34Q_i} \left| \frac{v_h(\tz)\lsb{\chi}{[\mft-s,\mft+s]} - v_h^i}{r}\right| \ d\tz.
\end{array}
\end{equation}
Let us fix an $i \in \NN$ and take two points $\tz_1 \in Q \cap \frac34Q_i$ and $\tz_2 \in \elam \cap 2Q$. Let $z_i$ denote the center of $\frac34Q_i$,  making use of  \descref{W2}{W2} along with the trivial bound $ d_{\la}(\tz_1, \tz_2) \leq  4r$ and $d_{\la} (z_i, \tz_1) \leq 2r_i$,  we get
\begin{equation}
\label{3.83.1}
16r_i =d_{\la}(z_i,\elam) \leq d_{\la} (z_i, \tz_1) + d_{\la} (\tz_1, \tz_2) \leq 2r_i + 4r  \ \Longrightarrow \ 2r_i \leq r.
\end{equation}

Note that \eqref{3.70} holds and thus summing over all $i \in \NN$ such that  $Q \cap\mch\cap \frac34Q_i \neq \emptyset$ in \eqref{3.82} and making use of  \eqref{3.83.1}, we get
\begin{equation*}
 \label{bound_J_1}
 \begin{array}{rcl}
 J_1 &\apprle& \sum_{\substack{i\in\NN \\ Q \cap\mch\cap \frac34Q_i \neq \emptyset }} \frac{|\frac34Q_i|}{|Q\cap\mch|} \fiint_{\frac34Q_i} \left| \frac{v_h(\tz)\lsb{\chi}{[\mft-s,\mft+s]} - v_h^i}{r}\right| \ d\tz\\ 
 & \overset{\redlabel{4.53.a}{a}}{\apprle} & \sum_{i\in \NN}  \fiint_{\frac34Q_i} \left| \frac{v_h(\tz)\lsb{\chi}{[\mft-s,\mft+s]} - v_h^i}{r_i}\right| \ d\tz \\
 & \overset{\redlabel{4.53.b}{b}}{\apprle} & \la.
 \end{array}
\end{equation*}
To obtain \redref{4.53.a}{a}, we made use of \eqref{3.70} and \eqref{3.83.1}, to obtain \redref{4.53.b}{b}, we follow the calculation from bounding \eqref{3.30}.

\item[Estimate for $J_2$:] Note that $Q \cap \mch$ is another cylinder. \emph{If $Q \subset B_{\rho}(\mfx) \times \RR$,} then choose a cut-off function $\mu \in C_c^{\infty}(B_{\rho}(\mfx))$ and apply Lemma \ref{lemma_crucial_1} to get
\begin{equation*}
\begin{array}{rcl}
J_2  
& \apprle & \lbr \fiint_{Q \cap \mch} |\nabla v_h|^q \lsb{\chi}{\ors} + \sup_{t_1,t_2 \in [\mft-s,\mft+s] \cap Q} \left| \frac{\avgs{v_h\lsb{\chi}{\ors}}{\mu}(t_1) - \avgs{v_h\lsb{\chi}{\ors}}{\mu}(t_1)}{r} \right|^q\rbr^{\frac{1}{q}}.
\end{array}
\end{equation*}
Recall that we are in the case $2Q \cap \elam \neq \emptyset$ and $2Q \cap \elam^c \neq \emptyset$. Further applying Lemma \ref{lemma_crucial_2} and proceeding as in \eqref{S_1_1.1}, we get
\begin{equation}
\label{J_2_bound_first_case}
J_2 \apprle \la. 
\end{equation}

\emph{On the other hand, if $Q \nsubseteq B_{\rho}(\mfx) \times \RR$,} then we can apply Poincar\'e's inequality from Theorem \ref{poincare} directly and make use of the fact that $2Q \cap \elam \neq \emptyset$ to get
\begin{equation*}
\label{bound_J_2_second_case}
J_2 
 \apprle \lbr \fiint_{Q \cap \mch} \left| \nabla v_h(\tz)\lsb{\chi}{[\mft-s,\mft+s]}\right|^q\ d\tz\rbr^{\frac{1}{q}} \apprle \la.
\end{equation*}

\end{description}

\item[Subcase $2Q \cap \RR^n \times {(-\infty,\mft-s]} \neq \emptyset$ and $2Q \cap \RR^n \times [\mft+s,\infty)\neq \emptyset$ AND $\ka r^2 \leq s$:] In this case, we see that $$|Q \cap \mch| = |B_1|r^n \times 2s.$$
 We apply   triangle inequality and estimate $I_r(z)$ as we did in \eqref{3.81} to get
 \begin{equation*}
  \label{3.81_n}
  \begin{array}{ll}
   I_r(z) 
   & \leq 2J_1 + J_2,
  \end{array}
 \end{equation*}
 where we have set
 \begin{gather*}
  J_1:= \fiint_{Q \cap \mch} \left| \frac{\vlh(\tz) - v_h(\tz)}{r}\right| \ d\tz  \txt{and} J_2 := \fiint_{Q \cap \mch} \left| \frac{v_h(\tz) - \avgs{v_h}{Q\cap \mch}}{r}\right|\ d\tz.\label{def_J_1_2_n}
 \end{gather*}
 
 We estimate $J_1$ as follows
 \begin{equation*}
 \label{3.82.n}
 \begin{array}{rcl}
 J_1 & \apprle &\sum_{i \in \NN} \frac{|\frac34Q_i|}{|Q\cap\mch|} \fiint_{\frac34Q_i} \left| \frac{v_h(\tz)\lsb{\chi}{[\mft-s,\mft+s]} - v_h^i}{r}\right| \ d\tz \\
 & \overset{\eqref{3.83.1}}{\apprle}& \frac{r_i^{n+2} \ka}{r^{n} s} \sum_{i\in \NN}  \fiint_{\frac34Q_i} \left| \frac{v_h(\tz)\lsb{\chi}{[\mft-s,\mft+s]} - v_h^i}{r_i}\right| \ d\tz\\
 & \overset{\eqref{3.83.1}}{\apprle}& \frac{r^{n+2} \ka}{r^{n} s} \sum_{i\in \NN}  \fiint_{\frac34Q_i} \left| \frac{v_h(\tz)\lsb{\chi}{[\mft-s,\mft+s]} - v_h^i}{r_i}\right| \ d\tz\\
 & \overset{\redlabel{4.58.a}{a}}{\apprle} &\frac{r^{2} \ka}{ s} \la\\
 & \overset{\redlabel{4.58.b}{b}}{\apprle} & \la.
 \end{array}
 \end{equation*}
To obtain \redref{4.58.a}{a}, we proceed similarly to \eqref{3.30} and to obtain \redref{4.58.b}{b}, we made use of  $\ka r^2 \leq s$.

 The estimate for $J_2$ is already obtained in \eqref{J_2_bound_first_case} which shows
 \begin{equation*}
 \label{bound_J_2_third_case}
 J_2 \apprle \la. 
 \end{equation*}
 
 \item[Subcase $2Q \cap \RR^n \times {(-\infty,\mft-s]} \neq \emptyset$ and $2Q \cap \RR^n \times [\mft+s,\infty)\neq \emptyset$ AND $\ka r^2 > s$:] Using triangle inequality and the bound $|Q \cap \mch| = |B_1|r^n \times 2s$, we get
 \begin{equation*}
  \label{3.91}
  \begin{array}{ll}
  \fiint_{Q \cap \mch}  \left| \frac{\vlh(\tz) - \avgs{\vlh}{Q \cap \mch}}{r} \right|\ d\tz & \apprle \frac{1}{|Q \cap \mch|} \iint_{Q \cap \mch} |\vlh(\tz)| \ d\tz \\
  & \apprle \frac{1}{|Q \cap \mch|} \iint_{Q \cap \mch \cap \elam} |\vlh(\tz)| \ d\tz + \frac{1}{|Q \cap \mch|} \iint_{Q \cap \mch \setminus \elam} |\vlh(\tz)| \ d\tz.
  \end{array}
 \end{equation*}

 By construction of $\vlh$ in \eqref{lipschitz_extension}, we have $\vlh = v_h$ on $\elam$.  On $\ors \setminus \elam$, we can apply Lemma \ref{lemma3.7} to obtain the following bound:
 \begin{equation*}
  \begin{array}{ll}
   \fiint_{Q \cap \mch}  \left| \frac{\vlh(\tz) - \avgs{Q \cap \mch}{\vlh}}{r} \right|\ d\tz & \apprle  \frac{1}{r^n s} \iint_{\ors} |v_h(\tz)| \ d\tz +   \frac{1}{|Q \cap \mch|} \iint_{Q \cap \mch	 \setminus \elam} \rho \la\ d\tz \\
   & \apprle \lbr \frac{\ka}{s}\rbr^{\frac{n}{2}} \frac{1}{s} \| v_h \|_{L^1(\ors)} + \rho \la\\
   & \apprle o(1).
  \end{array}
 \end{equation*}
\end{description}
\end{description}
This completes the proof of the Lipschitz regularity.
\end{proof}

\subsection{Two crucial estimates}

We shall now prove the first crucial estimate which holds on each time slice. 
\begin{lemma}
 \label{pre_crucial_lemma}
 For any $i \in \NN$ and any $0 < \ve \leq 1$, there exists a positive constant $C{(n,p,q, \lamot,m_e)}$ such that for  almost every $t \in [\mft-s,\mft+s]$, there holds
 \begin{equation}
 \label{3.120}
  \left| \int_{B_{\rho}(\mfx)} (v(x,t) - v^i) \vl(x,t) \Psi_i(x,t) \ dx \right| \leq C \lbr  \frac{\la^p}{\ve} |4Q_i| + \ve |4B_i| |v^i|^2\rbr. 
 \end{equation}
 
\end{lemma}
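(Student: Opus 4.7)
The plan is to exploit the explicit form of the Lipschitz truncation $\vl$ on the support of $\Psi_i$ and reduce \eqref{3.120} to a pointwise-in-$t$ $L^1$ estimate of $v - v^i$ weighted by $\Psi_i$. Since $\spt \Psi_i \subset \tfrac{3}{4}Q_i$ and \descref{W10}{W10} gives $\sum_{j \in A_i}\Psi_j \equiv 1$ on $\tfrac{3}{4}Q_i$, the construction \eqref{lipschitz_extension} yields the identity
\[
\vl(x,t)\,\Psi_i(x,t) \;=\; \sum_{j\in A_i}\Psi_j\,\Psi_i\, v^j \;=\; v^i\,\Psi_i \;+\; \Psi_i\sum_{j\in A_i}\Psi_j(v^j - v^i)
\]
on $\spt \Psi_i$. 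Substituting into $I(t):=\int(v-v^i)\vl\Psi_i\,dx$ produces the decomposition
\[
I(t) \;=\; v^i\!\int (v-v^i)\Psi_i\,dx \;+\; \sum_{j\in A_i}(v^j-v^i)\!\int (v-v^i)\Psi_j\Psi_i\,dx \;=:\; T_1(t) + T_2(t).
\]
By Lemma \ref{lemma3.8}, $|v^j - v^i| \apprle r_i\la$ uniformly for $j \in A_i$, while \descref{W13}{W13} bounds $\#A_i$ by a dimensional constant; thus
\[
|T_1(t)| + |T_2(t)| \;\apprle\; \big(|v^i| + r_i\la\big)\int_{B_\rho(\mfx)}|v(x,t) - v^i|\,\Psi_i(x,t)\,dx.
\]

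The heart of the argument is therefore the pointwise-in-$t$ claim
\[
(\star)\qquad \int_{B_\rho(\mfx)}|v(x,t)-v^i|\Psi_i(x,t)\,dx \;\apprle\; r_i\la\,|4B_i| \qquad \text{for a.e. } t\in \tfrac{3}{4}I_i.
\]
To prove ($\star$), decompose $v - v^i = (v - \vl) + (\vl - v^i)$. The second summand is controlled pointwise by Lemma \ref{lemma3.8}, yielding $\int|\vl - v^i|\Psi_i\,dx \apprle r_i\la\,|B_i|$. For the first, the Whitney expansion $v - \vl = \sum_{k\in A_i}\Psi_k(v - v^k)$ is valid on $\tfrac{3}{4}Q_i\subset \elam^c$ by \descref{W4}{W4}, and reduces the task to estimating $\int|v - v^k|\Psi_k\Psi_i\,dx$ for each $k\in A_i$. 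I would write further $v(x,t) - v^k = (v(x,t) - \avgs{v}{\mu}(t)) + (\avgs{v}{\mu}(t) - v^k)$ with a time-independent cutoff $\mu \in C_c^\infty(\tfrac{3}{4}B_k)$ normalized so that $\int\mu = 1$ and $|\nabla\mu|\apprle r_k^{-n-1}$. The spatial oscillation is handled via Lemma \ref{lemma_crucial_1} on the enlarged Whitney cylinder, whereas the temporal oscillation $|\avgs{v}{\mu}(t) - v^k|$ is handled via Lemma \ref{lemma_crucial_2}; both invocations draw on the space-time gradient averages $\fiint_{16Q_k}(|\nabla v|+|\nabla\varphi|+|\nabla\phi|+|\vec{f}|+|\vec{g}|)^{q} \apprle \la^q$ supplied by \descref{W4}{W4}, upgraded to $(p-1)$-averages through H\"older and \eqref{expon_lip_lem}. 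Summing over $k\in A_i$ using \descref{W5}{W5} and \descref{W13}{W13} closes ($\star$).

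Combining ($\star$) with the bound on $T_1+T_2$ yields $|I(t)| \apprle (r_i\la + |v^i|)\cdot r_i\la\,|4B_i|$. Young's inequality splits the cross term as $|v^i|\cdot r_i\la\,|4B_i| \leq \ve|v^i|^2|4B_i| + \tfrac{r_i^2\la^2|4B_i|}{4\ve}$, while the residual $r_i^2\la^2|4B_i|$ coincides, up to constants, with $\la^p\cdot \la^{2-p}r_i^2|4B_i| \simeq \la^p|4Q_i|$ by the scaling $\ka = \la^{2-p}$ in \descref{W1}{W1}, giving exactly \eqref{3.120}. The main obstacle is establishing ($\star$): a direct spatial Poincar\'e inequality on a single time-slice would require control of $\int_{\frac{3}{4}B_k}|\nabla v(x,t)|\,dx$ at a fixed $t$, which is not furnished by \descref{W4}{W4}. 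The resolution is to route through $\vl$ itself (whose pointwise bounds from Lemmas \ref{lemma3.9}--\ref{lemma3.10.2} are valid everywhere) and to trade slice-wise gradient integrals for time-continuity differences that Lemmas \ref{lemma_crucial_1} and \ref{lemma_crucial_2} dispatch using only the space-time averages on $16Q_k$.
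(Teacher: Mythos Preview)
Your reduction to the pointwise-in-$t$ claim $(\star)$ is where the argument breaks. The decomposition $v-v^i=(v-\vl)+(\vl-v^i)$ and the bound on $\vl-v^i$ via Lemma~\ref{lemma3.8} are fine, but the remaining piece $\int_{\frac34 B_k}|v(x,t)-\avgs{v}{\mu}(t)|\,dx$ at a \emph{fixed} $t$ is exactly the slice-wise gradient integral you flagged as the obstacle, and neither of the lemmas you invoke removes it. Lemma~\ref{lemma_crucial_1} controls only the \emph{space-time average} $\fiint_{B_r\times I}|\psi-\avgs{\psi}{\rho}|$; it says nothing about a single time slice. Lemma~\ref{lemma_crucial_2} uses the equation, but only to bound differences $\avgs{v}{\mu}(t_2)-\avgs{v}{\mu}(t_1)$ of \emph{spatial averages} at two times; it gives no handle on the spatial oscillation $v(\cdot,t)-\avgs{v}{\mu}(t)$ at one time. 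In fact, for a generic $L^p(I;W^{1,p})$ function the slice quantity $\int_{B_k}|\nabla v(\cdot,t)|$ can be arbitrarily large on a small set of times while $\fiint_{16Q_k}|\nabla v|^q\le \la^q$ remains valid, so $(\star)$ cannot follow from the Whitney data alone; it genuinely needs the PDE in a way your route does not supply.

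The paper's proof bypasses $(\star)$ entirely and uses the equation in a different place. One tests \eqref{eqone}--\eqref{eqtwo} with $\Psi_i\,\vlh$ and integrates in time from the bottom of $\tfrac34 I_i$ (or from $\mft-s$, where either $\Psi_i$ or $\vlh$ vanishes) up to the given $t$; the fundamental theorem of calculus turns the slice integral $\int_{B_\rho}(v-v^i)\Psi_i\vl\,dx$ into a \emph{space-time} integral over $\tfrac34 Q_i$ involving $(|\nabla\varphi|^{p-1}+|\nabla\phi|^{p-1}+|\vec f|^{p-1}+|\vec g|^{p-1})\cdot(|\vl|/r_i+|\nabla\vl|)$ and $|v-v^i|\,|\partial_t(\Psi_i\vl)|$. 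All of these live on the cylinder and are controlled by \descref{W4}{W4} together with the pointwise bounds in Lemmas~\ref{lemma3.9}--\ref{lemma3.10.2} and the cylinder oscillation estimate for $\iint_{\frac34 Q_i}|v-v^i|$ from \eqref{3.30}. In short: the missing idea is to use the weak formulation to trade the fixed-$t$ integral for a space-time one, rather than trying to prove a slice-wise Poincar\'e bound that the available tools do not deliver.
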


\begin{proof}
Let us fix any $t \in [\mft-s,\mft+s]$,  $i \in \NN$  and take $\Psi_i(y,\tau) \vlh(y,\tau)$ as a test function in \eqref{eqone} and \eqref{eqtwo}. Further integrating the resulting expression over $ \left(t_i - \ka \left(\frac34 r_i\right)^2 , t\right)$ or $(\mft-s,t)$ depending on the location of $\frac34Q_i$, along with making use of  the fact that $\Psi_i(y,t_i - \ka (3r_i/4)^2) = 0$ or $\vlh(y,\mft-s) =0$, we get for  any $a\in \RR$, the equality
%
\begin{equation}
 \label{3.123}
 \begin{array}{ll}
  \int_{B_{\rho}(\mfx)} \lbr[(]  (v_h - a)  \Psi_i \vlh \rbr (y,t) \ dy & = \int_{\max \lbr[\{]t_i - \ka \left(\frac34 r_i\right)^2,\mft-s\rbr[\}]}^t \int_{B_{\rho}(\mfx)} \pa_t \left(  (v_h - a) \Psi_i \vlh \right) (y,\tau) \ dy \ d\tau \\
  & = \int_{\max \lbr[\{]t_i - \ka \left(\frac34 r_i\right)^2,\mft-s\rbr[\}]}^t \int_{B_{\rho}(\mfx)} \pa_t \left( [\varphi-\phi]_h  \Psi_i \vlh  - a \Psi_i \vlh \right) (y,\tau) \ dy \ d\tau \\
  & = \int_{\max \lbr[\{]t_i - \ka \left(\frac34 r_i\right)^2,\mft-s\rbr[\}]}^t \int_{B_{\rho}(\mfx)} \iprod{[\aa(y,\tau,\nabla \phi)]_h - [\aa(y,\tau,\nabla \varphi)]_h}{\nabla (\Psi_i \vlh)} \ dy \ d\tau  \\
  & \qquad +\ \int_{\max \lbr[\{]t_i - \ka \left(\frac34 r_i\right)^2,\mft-s\rbr[\}]}^t \int_{B_{\rho}(\mfx)} \iprod{[|\vec{f}|^{p-2} \vec{f}+|\vec{g}|^{p-2} \vec{g}]_h}{\nabla (\Psi_i \vlh)} \ dy \ d\tau  \\
  & \qquad -\  \int_{\max \lbr[\{]t_i - \ka \left(\frac34 r_i\right)^2,\mft-s\rbr[\}]}^t \int_{B_{\rho}(\mfx)} a \pa_t \lbr \Psi_i \vlh\rbr \ dy \ d\tau.
 \end{array}
\end{equation}

We can estimate $|\nabla ( \Psi_i \vl)|$ using the chain rule and \descref{W9}{W9}, to get
\begin{equation}
 \label{3.126}
 \begin{array}{ll}
 |\nabla ( \Psi_i \vlh)| 
 & \apprle \frac{1}{r_i} |\vl| + |\nabla \vl|.
 \end{array}
\end{equation}
Similarly, we can estimate $\left|\pa_t\lbr \Psi_i \vl \right)\right|$ using the chain rule and \descref{W9}{W9}, to get
\begin{align}
 \left| \pa_t \lbr \Psi_i \vl\rbr\right| & \apprle  \frac{1}{\ka r_i^2} |\vl| + |\pa_t \vl|.\label{3.128}
\end{align}

 Let us take $a=v_h^i$ in the \eqref{3.123} followed by letting $h \searrow 0$ and making use of \eqref{3.126} and \eqref{abounded},  we get
 \begin{equation}
  \label{first_1}
  \begin{array}{ll}
   \left| \int_{B_{\rho}(\mfx)} \lbr[(] (v - v^i) \Psi_i \vl \rbr (y,t) \ dy \right| & \apprle J_1 + J_2 + J_3,
  \end{array}
 \end{equation}

 where we have set 
 \begin{align}
  J_1& := \frac{1}{r_i} \iint_{\ors} \lbr |\nabla \varphi|^{p-1}+|\nabla \phi|^{p-1}+|\vec{f}|^{p-1}  + |\vec{g}|^{p-1} \rbr  |\vl|   \lsb{\chi}{\frac34Q_i\cap \ors} \ dy \ d\tau, \nonumber \\
  J_2& :=  \iint_{\ors} \lbr |\nabla \varphi|^{p-1}+|\nabla \phi|^{p-1} +|\vec{f}|^{p-1}  + |\vec{g}|^{p-1}\rbr  |\nabla \vl|   \lsb{\chi}{\frac34Q_i\cap \ors} \ dy \ d\tau,\nonumber \\
  J_3&:= \iint_{\ors} |v-v^i| | \pa_t (\Psi_i \vl)|  \lsb{\chi}{\frac34Q_i\cap \ors} \ dy \ d\tau.\label{bound_J_3_1}
 \end{align}

 Let us now estimate each of the terms as follows: 
 \begin{description}
  \item[Bound for $J_1$:] If $\rho \leq r_i$,  we can directly use H\"older's inequality, Lemma \ref{lemma3.7} and \descref{W4}{W4}, to find that for any $\ve \in (0,1]$, there holds 
  \begin{equation}
   \label{bound_I_1_rho_leq_r_i}
   \begin{array}{ll}
    J_1 & \apprle \la |Q_i| \lbr \fiint_{16Q_i} \lbr |\nabla \varphi|^{q}+|\nabla \phi|^q+|\vec{f}|^q + |\vec{g}|^q\rbr  \lsb{\chi}{\ors}  \ dy \ d\tau\rbr^{\frac{p-1}{q}} \apprle  \frac{\la^p}{\ve} |4Q_i|.
   \end{array}
  \end{equation}
  In the case $r_i \leq \rho$, we make use of \eqref{lemma3.10_bound3}, \descref{W4}{W4} along with the fact $|Q_i| = |B_i| \times 2\la^{2-p} r_i^2$, to get
  \begin{equation}
  \label{bound_I_1_rho_geq_r_i}
   \begin{array}{ll}
    J_1
    & {\apprle} \frac{1}{r_i} \lbr \frac{r_i \la}{\ve} + \frac{\ve}{\la r_i} |v^i|^2  \rbr |4Q_i|\lbr \fiint_{4Q_i} \lbr |\nabla \varphi|^{q}+|\nabla \phi|^q+|\vec{f}|^q + |\vec{g}|^q\rbr  \lsb{\chi}{\ors}\ dy \ d\tau \rbr^{\frac{p-1}{q}} \\
    & {\apprle} \frac{1}{r_i} \lbr \frac{r_i \la}{\ve} + \frac{\ve}{\la r_i}  |v^i|^2  \rbr |4Q_i|\la^{p-1} 
     {\apprle} \frac{\la^p}{\ve} |4Q_i| +  \ve |4B_i| |v^i|^2.	
   \end{array}
  \end{equation}

Thus combining \eqref{bound_I_1_rho_geq_r_i} and \eqref{bound_I_1_rho_leq_r_i}, we get
\begin{equation}
 \label{bound_I_1}
 J_1 \apprle \frac{\la^p}{\ve} |4Q_i| +  \lsb{\chi}{r_i \leq \rho}\ve |4B_i| |v^i|^2,
\end{equation}
where we have set $\lsb{\chi}{r_i \leq \rho} = 1$ if  $r_i \leq \rho$ and $\lsb{\chi}{r_i \leq \rho}=0$ else.

  \item[Bound for $J_2$:] In this case, we can directly use Lemma \ref{lemma3.9} and \descref{W4}{W4} to get for any $\ve \in (0,1]$, the bound
  \begin{equation}
   \label{bound_I_22}
    J_2 
     \apprle \frac{\la|4Q_i|}{\ve} \lbr \fiint_{4Q_i}\lbr |\nabla \varphi|^{q}+|\nabla \phi|^q+|\vec{f}|^q + |\vec{g}|^q\rbr  \lsb{\chi}{\ors} \ dy \ d\tau \rbr^{\frac{p-1}{q}}
   \apprle \frac{\la^p}{\ve} |4Q_i|.
  \end{equation}
  \item[Bound for $J_3$:] Substituting  \eqref{lemma3.10_bound4}, \eqref{lemma3.11.bound2} and \descref{W9}{W9} into \eqref{3.128},  for any $\ve \in (0,1]$, there holds
  \begin{equation}
   \label{bound_3_1}
   \begin{array}{ll}
    |\pa_t(\Psi_i \vl)(z)| 
    & \apprle \frac{1}{\ka r_i^2} \lbr \frac{r_i \la}{\ve}  + \frac{\ve}{r_i \la} |v^i|^2 \rbr + \frac{1}{\ka r_i^2} \min\{r_i,\rho\} \la \approx \frac{1}{\ka r_i^2} \lbr \frac{r_i \la}{\ve}  + \frac{\ve}{r_i \la} |v^i|^2 \rbr. 
   \end{array}
  \end{equation}
Making use of  \eqref{bound_3_1} in the expression for $J_3$ in \eqref{bound_J_3_1}, we get
  \begin{equation*}
   \label{bound_I_3}
   \begin{array}{ll}
    J_3 & \apprle \frac{1}{\ka r_i^2} \lbr \frac{r_i \la}{\ve}  + \frac{\ve}{r_i \la} |v^i|^2 \rbr\iint_{\frac34Q_i} |v-v^i|  \lsb{\chi}{\ors}  \ dy \ d\tau.
   \end{array}
  \end{equation*}
  We can now proceed similarly to \eqref{3.30} to get
\begin{equation}
 \label{bound_I33}
 \begin{array}{ll}
 J_3 & \apprle \frac{1}{\ka r_i^2} \lbr \frac{r_i \la}{\ve}  + \frac{\ve}{r_i \la} |v^i|^2 \rbr r_i \la |Q_i| 
   \apprle \frac{\la^p }{\ve} |4Q_i| + \ve |4B_i| |v^i|^2.
 \end{array}
\end{equation}

 \end{description}

 Substituting the estimates \eqref{bound_I_1}, \eqref{bound_I_22} and \eqref{bound_I33} into \eqref{first_1} gives the proof of  \eqref{3.120}. 
\end{proof}

We now come to  essentially the most important estimate which will be needed to prove the difference estimate:
\begin{lemma}
 \label{crucial_lemma}
 There exists a positive constant $C{(n,p,q, \lamot,m_e)}$ such that the following estimate holds for every $t \in [-s,s]$:
 \begin{equation}
 \label{3122}
  \int_{B_{\rho}(\mfx) \setminus \elam^t} (|v|^2 - |v - \vl|^2)(x,t) \ dx \geq - C  \la^p |\RR^{n+1} \setminus \elam|.
 \end{equation}

\end{lemma}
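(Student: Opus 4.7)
The plan is to exploit the algebraic identity
\[
|v|^2 - |v-\vl|^2 = \vl(2v - \vl) = 2v\vl - \vl^2,
\]
together with the partition of unity $\{\Psi_i\}$ from Lemma \ref{whitney_decomposition}. Using $\sum_i \Psi_i \equiv 1$ on $\elam^c$ (by \descref{W10}{W10}) and the identity $\vl = \sum_k \Psi_k v^k$ on $\elam^c \cap \ors$ (from \eqref{lipschitz_extension} together with \descref{W10}{W10}), one obtains the decomposition
\[
\int_{B_\rho(\mfx)\setminus\elam^t}(|v|^2 - |v-\vl|^2)(x,t)\, dx = 2\sum_i \int_{B_\rho(\mfx)}\Psi_i(v - v^i)\vl\, dx + \int_{B_\rho(\mfx)\setminus\elam^t}\vl^2\, dx
\]
by writing $2v\vl = 2\sum_i\Psi_i(v - v^i)\vl + 2\vl^2$ on $\elam^c$.

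For the cross term I apply Lemma \ref{pre_crucial_lemma} to each $i$ and sum, using the finite overlap \descref{W7}{W7}:
\[
\left|2\sum_i \int \Psi_i(v-v^i)\vl\, dx\right| \leq \frac{C\la^p}{\ve}|\RR^{n+1}\setminus\elam| + 2C\ve\sum_i|4B_i||v^i|^2.
\]
The first piece is already of the desired form, so the crucial task is to absorb the sum $\sum_i|B_i||v^i|^2$ into the positive term $\int\vl^2$.

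To this end, for any Whitney cylinder $Q_i$ with $t \in \tfrac12 I_i$ and any $x \in \tfrac12 B_i$, I use $\vl(x,t) = \sum_{k \in A_i}\Psi_k(x,t)\, v^k$ (valid on $\tfrac34 Q_i$ by \descref{W10}{W10}), together with Lemma \ref{lemma3.8} and $\#A_i \leq c(n)$ from \descref{W13}{W13}, to deduce $|\vl(x,t) - v^i| \leq Cr_i\la$, whence
\[
|v^i|^2 \leq 2\,\vl(x,t)^2 + Cr_i^2\la^2.
\]
Averaging in $x$ over $\tfrac12 B_i$, summing over all $i$ with $t \in \tfrac12 I_i$, and invoking the bounded overlap \descref{W7}{W7} together with the Whitney scaling $r_i^2\la^2|B_i| = \tfrac12\la^p|Q_i|$ (which follows from $|Q_i| = 2|B_i|\la^{2-p}r_i^2$), one arrives at
\[
\sum_{i:\, t \in \frac12 I_i}|B_i||v^i|^2 \leq C\int_{B_\rho(\mfx)\setminus\elam^t}\vl^2\, dx + C\la^p|\RR^{n+1}\setminus\elam|.
\]

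The main obstacle is treating cylinders with $t \in \tfrac34 I_i \setminus \tfrac12 I_i$, where the preceding averaging argument does not directly apply. In that case, one observes that whenever $\Psi_i(x,t) > 0$ we have $(x,t) \in \elam^c$, so $(x,t) \in \tfrac12 Q_{j^*}$ for some $j^* \in A_i$ (forced by \descref{W3}{W3} and the definition of $A_i$); Lemma \ref{lemma3.8} then gives $|v^i|^2 \leq 2|v^{j^*}|^2 + Cr_i^2\la^2$, transferring the absorption estimate to the neighbor $Q_{j^*}$ which does satisfy $t \in \tfrac12 I_{j^*}$. Inserting these refined absorption bounds into the decomposition and choosing $\ve = \ve(n,p,q,\lamot,m_e)$ sufficiently small that the coefficient of $\int\vl^2$ on the right does not exceed $\tfrac12$, the positive $\int\vl^2$ term absorbs itself and yields the desired lower bound $-C\la^p|\RR^{n+1}\setminus\elam|$.
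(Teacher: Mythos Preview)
Your argument is correct, and it uses the same core ingredient (Lemma~\ref{pre_crucial_lemma}) as the paper, but the algebraic organization differs in a way worth noting.

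You keep $\int_{B_\rho\setminus\elam^t}\vl^2$ as the positive absorbing term and then have to convert the error $\sum_i|B_i||v^i|^2$ back into $\int\vl^2$ via the pointwise bound $|v^i|^2\le 2\vl(x,t)^2+Cr_i^2\la^2$ on $\tfrac12Q_i$, which forces the additional case distinction for indices with $t\in\tfrac34I_i\setminus\tfrac12I_i$. The paper instead rewrites
\[
|v|^2-|v-\vl|^2 \;=\; |v^i|^2 + 2\vl(v-v^i) - |\vl-v^i|^2
\]
before multiplying by $\Psi_i$ and summing. This produces $\sum_i\int\Psi_i|v^i|^2\,dx$ directly as the positive term, so the error $\ve\sum_i|4B_i||v^i|^2$ from Lemma~\ref{pre_crucial_lemma} is absorbed immediately by choosing $\ve$ small (using \descref{W8}{W8}), and the remaining piece $\sum_i\int\Psi_i|\vl-v^i|^2\,dx$ is bounded by $C\la^p|\RR^{n+1}\setminus\elam|$ via Lemma~\ref{lemma3.8} and the Whitney scaling, exactly as in your treatment of the residual $r_i^2\la^2$ terms. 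The two decompositions are algebraically equivalent (since $\sum_i\Psi_i|v^i|^2-\sum_i\Psi_i|\vl-v^i|^2=\vl^2$ on $\elam^c$), but the paper's choice makes the absorption a one-line matching of like terms and sidesteps your neighbor-transfer argument for the borderline indices.
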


\begin{proof}
 Let us fix any $t\in [\mft-s,\mft+s]$ and any point $x \in B_{\rho}(\mfx) \setminus \elam^t$.  Now define
 \begin{equation*}
  \tTh := \left\{  i \in \NN: \spt(\Psi_i) \cap B_{\rho}(\mfx) \times \{t\} \neq \emptyset \quad \text{and} \quad |v| + |\vl| \neq 0 \quad  \text{on}\ \spt(\Psi_i)\cap (B_{\rho}(\mfx) \times \{t\}) \right\}.
 \end{equation*}

 Hence we only need to consider $i \in \tTh$.  
 Note that $\sum_{i \in \tTh} \Psi_i(\cdot, t) \equiv 1$ on $\RR^n \cap \elam^t$, we can rewrite the left-hand side of \eqref{3122} as 
 \begin{equation*}
  \label{I_1}
  \begin{array}{ll}
  \int_{B_{\rho}(\mfx) \setminus \elam^t} (|v|^2 - |v - \vl|^2)(x,t) \ dx & = \sum_{i \in \tTh} \int_{B_{\rho}(\mfx)} \Psi_i (|v|^2 - |v - \vl|^2) \ dx \\
  & = \sum_{i \in \tTh} \int_{B_{\rho}(\mfx)} \Psi_i(z) \lbr |v^i|^2  + 2 \vl (v - v^i) \rbr \ dx - \sum_{i \in \tTh} \int_{B_{\rho}(\mfx)} \Psi_i(z) |\vl - v^i|^2 \ dx\\
  & := J_1 + J_2.
  \end{array}
 \end{equation*}
%
 \begin{description}[leftmargin=*]
  \item[Estimate of $J_1$:] Using \eqref{3.120}, we get
  \begin{equation}
   \label{I_1_1}
   J_1 \apprge \sum_{i \in \tTh} \int_{B_{\rho}(\mfx)} \Psi_i(z)  |v^i|^2 \ dz - \ve \sum_{i \in \tTh} |4B_i| |v^i|^2 - \sum_{i \in \tTh} \frac{\la^p}{\ve} |4Q_i|.
  \end{equation}
  From \eqref{lipschitz_extension_one}, we have $v^i = 0$ whenever $\spt(\Psi_i) \cap B_{\rho}(\mfx)^c \neq \emptyset$. Hence we only have to sum over all those $i \in \tTh$ for which $\spt(\Psi_i) \subset B_{\rho}(\mfx)\times [\mft-s,\infty)$.  In this case, we make use of a suitable choice for $\ve \in (0,1]$, and use \descref{W7}{W7} along with \descref{W8}{W8}, to estimate \eqref{I_1_1} from below to get
  \begin{equation}
 \label{bound_I1}
 J_1 \apprge -\la^p |\RR^{n+1} \setminus \elam|. 
\end{equation}

  \item[Estimate of $J_2$:] For any $x \in B_{\rho}(\mfx) \setminus \elam^t$, we have from \descref{W10}{W10} that $\sum_{j} \Psi_j(x,t) = 1$, which gives
  \begin{equation}
   \label{I_2_1}
   \begin{array}{ll}
    \Psi_i(z) |\vl(z) - v^i|^2  
    & \apprle \Psi_i(z)  \sum_{j \in A_i} |\Psi_j(z)|^2  \lbr v^j - v^i\rbr^2 \\
    & \overset{\redlabel{3.104.a}{a}}{\apprle} \min\{ \rho, r_i\}^2 \la^2.
   \end{array}
  \end{equation}
To obtain \redref{3.104.a}{a} above, we made use of Lemma \ref{lemma3.8}  along with \descref{W13}{W13}.  Substituting \eqref{I_2_1} into the expression for $J_2$ and using $|Q_i| = |B_i| \times 2\ka r_i^2$, we get
\begin{equation}
 \label{bound_I_2}
J_2 
\apprle 
 \sum_{i \in \tTh_1} \left|B_i\right| \frac{\ka r_i^2}{\ka} \la^2 
 \apprle \la^p |\RR^{n+1} \setminus \elam|.
\end{equation}
 \end{description}
Substituting \eqref{bound_I1} and \eqref{bound_I_2} into \eqref{I_1}, we get
\begin{equation*}
 \label{bound_I}
 J_2 \apprge - \la^p |\RR^{n+1} \setminus \elam|.
\end{equation*}
This completes the proof of the lemma.
\end{proof}

\section{Comparison estimates}
\label{section_four}

Before we state the main difference estimates, let us define the the approximations that we will make and recall some useful results in existing literature. Let us fix the point
%
\begin{equation*}\label{dev_mfz}\mfz = (\mfx,\mft) \in \overline{\Om} \times (-T,T).\end{equation*} 


\subsection{Approximations}

Let $u$ be a weak solution of \eqref{main} and   consider the unique weak solution  $w \in C^0\lbr I_{4\rho}(\mft);L^2(\Om_{4\rho}(\mfx)\rbr  \cap L^{p}\lbr I_{4\rho}(\mft);W^{1,p}(\Om_{4\rho}(\mfx)\rbr$ solving
\begin{equation}
 \label{wapprox_int}
\left\{ \begin{array}{rcll}
  w_t - \dv \aa(x,t,\nabla w) &=& 0 & \quad \text{in} \ K_{4\rho}(\mfz),\\
  w &=&u & \quad \text{on} \ \partial_p K_{4\rho}(\mfz).
 \end{array}\right. 
\end{equation}
This is possible, since \eqref{main} shows $u \in L^{p}\lbr I_{4\rho}(\mft);W^{1,p}(\Om_{4\rho}(\mfx)\rbr$ and $\ddt{u} \in \lbr  W^{1,p}(K_{4\rho}(\mfz)) \rbr'$ in the sense of distribution.

Recalling the notation from \eqref{def_aaa},   we will need to make another approximation to \eqref{wapprox_int}:
\begin{equation}
 \label{vapprox_bnd}
\left\{ \begin{array}{rcll}
  v_t - \dv \aaa_{B_{3\rho}(\mfx)}(\nabla v,t) &=& 0 & \quad \text{in} \ K_{3\rho}(\mfz),\\
  v &=& w & \quad \text{on} \ \pa_p K_{3\rho}(\mfz),
 \end{array}\right. 
\end{equation}
which admits a unique weak solution $v \in C^0\lbr I_{3\rho}(\mft);L^2(\Om_{3\rho}(\mfx)\rbr \cap L^{p}\lbr I_{3\rho}(\mft);W^{1,p}(\Om_{3\rho}(\mfx)\rbr$ since Proposition \ref{ext_sol} is applicable.
%
%
%

\subsection{Interior Lipschitz regularity}
In the case $K_{3\rho}(\mfz) = Q_{3\rho}(\mfz)$, i.e., we are in the interior case, then we have the following interior Lipschitz regularity for \eqref{vapprox_bnd} (see \cite[Theorem 5.1 and Theorem 5.2]{DiB1}):
\begin{lemma}
\label{existence_ov}
There exists a weak solution $v \in  C^0\lbr I_{3\rho}(\mft);L^2(B_{3\rho}(\mfx)\rbr \cap L^{p}\lbr I_{2\rho}(\mft);W^{1,p}(\Om_{2\rho}^{+}(\mfx)\rbr$ solving \eqref{vapprox_bnd}. Furthermore, there holds
\begin{gather}
\sup_{Q_{\rho}(\mfz)} |\nabla v| \leq C_{(n,p,\lamot)} \lbr \fiint_{Q_{2\rho}(\mfz)} |\nabla v|^{p} \ dz \rbr^{\frac{1}{p}}.
\end{gather}
\end{lemma}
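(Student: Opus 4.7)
The plan is to decompose the lemma into two parts: first establishing existence of a weak solution to \eqref{vapprox_bnd} via the general existence theory already recorded in the paper, and second deducing the sup-bound on $|\nabla v|$ from the classical interior regularity theory for parabolic equations whose coefficients depend only on time and the gradient variable.

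For existence, I would apply Proposition \ref{ext_sol} with the nonlinearity $\mathcal{A}(z,\zeta) = \aaa_{B_{3\rho}(\mfx)}(t,\zeta)$ on the domain $B_{3\rho}(\mfx)\times I_{3\rho}(\mft)$ with boundary and initial datum provided by $w$. The key preliminary step is to verify that the spatially averaged nonlinearity $\aaa_{B_{3\rho}(\mfx)}(t,\zeta) = \fint_{B_{3\rho}(\mfx)} \aa(x,t,\zeta)\, dx$ inherits the structure conditions \eqref{abounded} and \eqref{bbounded} with the same constants $\La_0,\La_1$: this follows by integrating the pointwise bounds in $x$ together with the convexity/concavity inherent in the monotonicity estimate. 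One also needs $w \in L^p(I_{3\rho}(\mft);W^{1,p}(B_{3\rho}(\mfx)))$ with $\pa_t w \in (W^{1,p})'$, which is immediate from the existence statement for \eqref{wapprox_int} obtained by the same Proposition.

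For the Lipschitz estimate, I would appeal directly to DiBenedetto's interior regularity theorem for parabolic equations modelled on the $p$-Laplacian. Crucially, since $\aaa_{B_{3\rho}(\mfx)}(t,\zeta)$ depends only on $t$ and $\zeta$ (the $x$-dependence has been averaged out), the problem \eqref{vapprox_bnd} falls squarely within the scope of \cite[Chapter~IX, Theorem 5.1 and Theorem 5.2]{DiB1}. The cited result yields the local sup-bound on $|\nabla v|$ in the form
\[
\sup_{Q_{\rho}(\mfz)} |\nabla v| \leq C \left( \fiint_{Q_{2\rho}(\mfz)} |\nabla v|^p \, dz \right)^{\frac{1}{p}},
\]
with a constant $C$ depending only on $n,p,\La_0,\La_1$; the exponent $p$ on the right-hand side (rather than the more common $p+\ve$) is legitimate in view of the intrinsic geometry used by DiBenedetto, combined with a standard iteration/interpolation over the scales between $\rho$ and $2\rho$.

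The only real subtlety, and the place where one should be careful rather than surprised, is to check that the $x$-averaged operator genuinely satisfies the two structure conditions with unchanged constants. The monotonicity \eqref{abounded} passes through the average because the integrand in $\iprod{\aaa(\zeta)-\aaa(\eta)}{\zeta-\eta}$ is the $x$-average of a pointwise nonnegative quantity bounded below by $\La_0(|\zeta|^2+|\eta|^2)^{(p-2)/2}|\zeta-\eta|^2$, a bound independent of $x$; and the Lipschitz-type bound \eqref{bbounded} passes through by the triangle inequality under the integral. Beyond this verification there is no genuine obstacle: both the existence statement and the Lipschitz bound are applications of results already in the literature, invoked exactly as stated in the lemma.
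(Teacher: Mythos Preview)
Your proposal is correct and matches the paper's approach: the paper does not give an independent proof of this lemma but simply records it as a consequence of the existence theory (Proposition~\ref{ext_sol}) together with the interior Lipschitz regularity from \cite[Theorem~5.1 and Theorem~5.2]{DiB1}, exactly as you outline. Your additional remarks verifying that the $x$-averaged nonlinearity $\aaa_{B_{3\rho}(\mfx)}(t,\zeta)$ inherits \eqref{abounded}--\eqref{bbounded} are the natural sanity check one should make before invoking DiBenedetto's theorem, and they are correct.
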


\subsection{Boundary Lipschitz regularity}
\label{bnd_app}
In the boundary case, we may not have Lipschitz regularity for solutions of \eqref{vapprox_bnd} up to the boundary in general. In order to overcome this difficulty, we need to make one further approximation in which we consider a  weak solution $\ov \in  C^0\lbr I_{2\rho}(\mft);L^2(\Om_{2\rho}^{+}(\mfx)\rbr \cap L^{p}\lbr I_{2\rho}(\mft);W^{1,p}(\Om_{2\rho}^{+}(\mfx)\rbr$ solving
\begin{equation*}
 \label{Vapprox_bnd}
\left\{ \begin{array}{rcll}
  \ov_t - \dv \aaa_{{B_{3\rho}(\mfx)}}(\nabla \ov,t) &=& 0 & \quad \text{in} \ Q_{2\rho}^{+}(\mfz),\\
  \ov &=& 0 & \quad \text{on} \  T_{2\rho}(\mfz).
 \end{array}\right. 
\end{equation*}

From \cite[Theorem 1.6]{lieberman1993boundary}, the following important lemma holds:
\begin{lemma}
\label{existence_oV}
There exists a weak solution $\ov \in  C^0\lbr I_{2\rho}(\mft);L^2(\Om_{2\rho}^{+}(\mfx)\rbr \cap L^{p}\lbr I_{2\rho}(\mft);W^{1,p}(\Om_{2\rho}^{+}(\mfx)\rbr$ solving \eqref{Vapprox_bnd}. Furthermore, there holds
\begin{gather*}
\sup_{Q^{+}_{\rho}(\mfz)} |\nabla \ov| \leq C(n,p,\lamot) \lbr \fiint_{Q^{+}_{2\rho}(\mfz)} |\nabla \ov|^{p} \ dz \rbr^{\frac{1}{p}}.
\end{gather*}
\end{lemma}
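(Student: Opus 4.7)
The plan is to establish the two assertions of the lemma separately. For existence of $\ov$, I would invoke Proposition \ref{ext_sol} applied to the Dirichlet problem on the half-cylinder $Q_{2\rho}^+(\mfz)$ with zero data on the flat portion $T_{2\rho}(\mfz)$ (and matching data inherited from $v$ on the remaining curved and bottom parts of $\pa_p Q_{2\rho}^+(\mfz)$). The averaged vector field $\aaa_{B_{3\rho}(\mfx)}(t,\zeta) = \fint_{B_{3\rho}(\mfx)} \aa(y,t,\zeta)\,dy$ inherits both the monotonicity \eqref{abounded} and the Lipschitz bound \eqref{bbounded} with the same constants $\lamot$, since these are pointwise-in-$\zeta$ convex/Lipschitz conditions preserved under integral averaging in $y$. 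Moreover, $Q_{2\rho}^+(\mfz)$ trivially satisfies a uniform measure density condition via its flat face, so Proposition \ref{ext_sol} applies and produces the desired $\ov \in C^0(I_{2\rho}(\mft);L^2(\Om_{2\rho}^+(\mfx))) \cap L^p(I_{2\rho}(\mft); W^{1,p}(\Om_{2\rho}^+(\mfx)))$.

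For the sup-norm gradient estimate the decisive feature is that $\aaa_{B_{3\rho}(\mfx)}(t,\zeta)$ is independent of $x$. This makes the equation translation invariant in the tangential directions $x_1,\ldots,x_{n-1}$ along $T_{2\rho}(\mfz)$, so one can take difference quotients and, combined with the vanishing Dirichlet trace of $\ov$ on $T_{2\rho}(\mfz)$, control the tangential components of $\nabla \ov$ up to the flat boundary. The remaining normal component $\pa_{x_n}\ov$ is then recovered from the equation itself in view of the uniform ellipticity provided by \eqref{abounded}. This is precisely the boundary $C^{1,\alpha}$-type framework developed in \cite{lieberman1993boundary}, which yields a uniform pointwise bound on $|\nabla \ov|$ up to $T_{2\rho}(\mfz)$ in terms of the $L^p$ average of $|\nabla \ov|$ over a slightly larger half-cylinder.

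To finish, I would combine this boundary bound with the interior Lipschitz estimate of Lemma \ref{existence_ov} applied on any sub-cylinder $Q_{\sigma}(\tz) \subset Q_{2\rho}^+(\mfz)$ with $Q_{\sigma}(\tz) \cap T_{2\rho}(\mfz) = \emptyset$, patching the two via a standard covering argument on $Q_{\rho}^+(\mfz)$. A final scaling step absorbs the radii into the normalization $\fiint_{Q_{2\rho}^+(\mfz)} |\nabla \ov|^p\,dz$, producing the claimed estimate
\[
 \sup_{Q^{+}_{\rho}(\mfz)} |\nabla \ov| \leq C_{(n,p,\lamot)} \lbr \fiint_{Q^{+}_{2\rho}(\mfz)} |\nabla \ov|^{p} \ dz \rbr^{\frac{1}{p}}.
\]

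The main obstacle I anticipate is the singular range $\tfrac{2n}{n+2} < p < 2$ permitted by \eqref{restriction_p}: here the natural parabolic scaling is intrinsic, i.e., the cylinders $Q_{\rho}^{\la}$ depend on the magnitude of $|\nabla \ov|$ itself, so the boundary regularity of Lieberman must be implemented via a De Giorgi--Moser iteration along intrinsic cylinders. Keeping the dependence of the constants explicit in $\lamot$ and stable as $p \downarrow \tfrac{2n}{n+2}$ requires careful tracking, but the $x$-independence of the frozen operator still ensures that no additional boundary mean-oscillation term enters, which is what permits the clean form of the estimate above.
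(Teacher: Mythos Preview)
The paper does not give a proof of this lemma: the sentence immediately preceding it reads ``From \cite[Theorem 1.6]{lieberman1993boundary}, the following important lemma holds,'' so both the existence assertion and the boundary gradient bound are simply quoted from Lieberman's paper. Your sketch is a reasonable outline of the mechanism behind that cited result (the $x$-independence of the frozen operator $\aaa_{B_{3\rho}(\mfx)}$ is indeed the key structural point enabling the flat-boundary estimate), but it goes well beyond what the paper itself does.

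One small caution on your existence argument: the problem \eqref{Vapprox_bnd} prescribes $\ov=0$ only on the flat portion $T_{2\rho}(\mfz)$, not on the full parabolic boundary of $Q_{2\rho}^+(\mfz)$, so Proposition \ref{ext_sol} does not apply directly as a Dirichlet solver. In the paper's actual use (Lemma \ref{lem_bnd_app}), the existence of $\ov$ is obtained by invoking \cite[Lemma 3.8]{BOS1}, which is a compactness/approximation argument rather than a direct existence theorem; your proposal to borrow lateral and initial data from $v$ is a workable alternative, but note that $v$ lives on $K_{3\rho}(\mfz)=Q_{3\rho}(\mfz)\cap\Om_T$, which need not contain $Q_{2\rho}^+(\mfz)$ in the Reifenberg-flat setting, so some care (or an extension) is required there.
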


\subsection{First comparison estimate}
In this subsection, we will prove a improved difference estimate between solutions of \eqref{main} and \eqref{wapprox_int}. 
\begin{theorem}
\label{first_difference_estimate}
Let $\de >0$ be given and Let $u$ be a weak solution of \eqref{main} and $w$ be the unique  weak solution of \eqref{wapprox_int}, then there exists an $\be_1 = \be_1(\lamot,p,n,m_e,\de) \in (0,1)$ such that for any $\be \in (0,\be_1)$, the following estimate holds:
\begin{equation*}
\label{lipschitz_difference_estimate}
\fiint_{K_{4\rho}(\mfz)} |\nabla u - \nabla w|^{p-\be} \ dz \leq \de \fiint_{K_{4\rho}(\mfz)} |\nabla u|^{p-\be} \ dz + C(n,p,\be,\lamot,\de) \fiint_{K_{4\rho}(\mfz)} |\bff|^{p-\be}.
\end{equation*}
\end{theorem}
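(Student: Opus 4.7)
The plan is to apply the Lipschitz truncation machinery of Section \ref{section_three} to the difference $v:=u-w$, test the corresponding difference equation against the truncation $v_{\la,h}$, and then pass from the resulting level-set information to the $L^{p-\be}$ bound via a Fubini argument coupled with the self-improvement built into Lemma \ref{high_very_weak}.

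In the notation of Section \ref{section_three} I take $\varphi\equiv u$, $\phi\equiv w$, $\vec g=\vo$, and $Q_{\rho,s}(\mfz)=K_{4\rho}(\mfz)$. Then $v=u-w$ satisfies $v\equiv0$ on $\pa_p K_{4\rho}(\mfz)$ and solves
\[
v_t-\dv[\aa(x,t,\nabla u)-\aa(x,t,\nabla w)]=\dv(|\bff|^{p-2}\bff)\quad\text{in}\quad K_{4\rho}(\mfz),
\]
so every property of $v_{\la,h}$ obtained in Lemmas \ref{lemma3.7}--\ref{lemma3.15} is available, in particular $|\nabla v_{\la}|\leq C\la$ from Lemma \ref{lemma3.9} and the crucial lower bound of Lemma \ref{crucial_lemma}. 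Testing the Steklov-averaged equation for $v$ against $v_{\la,h}$ and sending $h\searrow0$, Lemma \ref{crucial_lemma} controls the parabolic term from below by $-C\la^p|K_{4\rho}\setminus E_{\la}|$. The elliptic term I split into integrals on $E_{\la}$ and $E_{\la}^c$: on the good set, where $v_{\la}=v$, the coercivity \eqref{abounded} produces $\int_{E_{\la}}(|\nabla u|^2+|\nabla w|^2)^{(p-2)/2}|\nabla v|^2\,dz$ which controls $\int_{E_{\la}\cap K_{4\rho}}|\nabla v|^p$ (directly when $p\geq 2$ and via an Acerbi--Fusco interpolation when $p<2$); on the bad set the growth bound \eqref{bbounded} together with $|\nabla v_{\la}|\leq C\la$ yields
\[
\Bigl|\int_{E_{\la}^c\cap K_{4\rho}}\langle\aa(x,t,\nabla u)-\aa(x,t,\nabla w),\nabla v_{\la}\rangle\,dz\Bigr|\leq C\la\int_{E_{\la}^c\cap K_{4\rho}}\bigl(|\nabla u|^{p-1}+|\nabla w|^{p-1}\bigr)dz,
\]
and the forcing term is handled analogously by splitting $v_{\la}$. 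Collecting everything, Young's inequality and the pointwise bound $|\nabla u|,|\nabla w|,|\bff|\leq g$ a.e.\ furnish the master level-set inequality
\[
\la^p\,\bigl|\{g>\la\}\cap K_{4\rho}(\mfz)\bigr|\leq C_{\ast}\int_{\{g>\la\}\cap K_{4\rho}(\mfz)}\bigl(|\nabla u|^p+|\bff|^p\bigr)dz\quad\text{for all}\quad\la\geq\la_0,
\]
with $C_{\ast}=C_{\ast}(n,p,\lamot,m_e)$.

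To extract the stated $L^{p-\be}$ estimate I take $\la_0=\bigl(\fiint_{K_{4\rho}}(|\nabla u|^{p-\be}+|\bff|^{p-\be})\,dz\bigr)^{1/(p-\be)}$ and apply Lemma \ref{weight_level_set} with $w\equiv 1$. Since $|\nabla v|\leq g$ a.e., $\{|\nabla v|>\la\}\subset\{g>\la\}$, so the $\la\leq\la_0$ contribution to the layer-cake integral is immediately absorbed into the right-hand side by the choice of $\la_0$. For $\la>\la_0$ I insert the master inequality and apply Fubini, obtaining
\[
\int_{\la_0}^\infty\la^{-\be-1}\int_{\{g>\la\}\cap K_{4\rho}}\bigl(|\nabla u|^p+|\bff|^p\bigr)dz\,d\la=\tfrac{1}{\be}\int_{K_{4\rho}\cap\{g>\la_0\}}\bigl(|\nabla u|^p+|\bff|^p\bigr)(z)\bigl(\la_0^{-\be}-g(z)^{-\be}\bigr)dz.
\]
Partitioning this last integral according to whether $|\nabla u|+|\bff|$ is below or above $\la_0$, and using $|\nabla u|+|\bff|\leq g$ a.e.\ together with the $L^{(p-\be)/q}$-boundedness of $\mm$ from Lemma \ref{max_bound} on the high part, bounds it from above by $C\fiint_{K_{4\rho}}(|\nabla u|^{p-\be}+|\bff|^{p-\be})\,dz$.

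The heart of the proof, and the source of the small constant $\de$ in the statement, is the final absorption step. The naive bound just derived carries a constant $C_\ast/\be$ which blows up as $\be\to 0$; to convert it into a prescribed $\de$ in front of $\fiint_{K_{4\rho}}|\nabla u|^{p-\be}$ I invoke the self-improvement of Lemma \ref{high_very_weak}, which allows me to trade a power $\be\tilde{\vt}_2$ of the $L^{p-\be}$ norm for a higher exponent and thereby extract an arbitrarily small factor by further shrinking $\be_1=\be_1(\lamot,p,n,m_e,\de)$. Calibrating this factor against the initial constant $C_\ast/\be$ yields the stated splitting, and it is precisely this coupling between the Lipschitz truncation, the Fubini step, and the higher integrability below the natural exponent that is the main technical obstacle.
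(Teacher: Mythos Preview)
Your setup—applying Section \ref{section_three} with $\varphi=u$, $\phi=w$, testing against $v_{\la,h}$, and splitting the elliptic term over $E_\la$ and $E_\la^c$—matches the paper. The gap is in what you do afterwards.

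First, your ``master level-set inequality'' $\la^p|\{g>\la\}\cap K_{4\rho}|\le C_\ast\int_{\{g>\la\}}(|\nabla u|^p+|\bff|^p)$ is not what the testing argument produces. The coercive term lives on the \emph{good} set $E_\la=\{g\le\la\}$, and the $\la^p|E_\la^c|$ term (coming from Lemma \ref{crucial_lemma}) sits on the \emph{right}-hand side, not the left. The correct output is
\[
\iint_{K_{4\rho}\cap E_\la}|\nabla v|^2(|\nabla u|^2+|\nabla w|^2)^{\frac{p-2}{2}}\,dz
\;\apprle\;
\iint_{K_{4\rho}\cap E_\la}|\bff|^{p-1}|\nabla v|\,dz
+\la\iint_{K_{4\rho}\setminus E_\la}(|\nabla u|^{p-1}+|\nabla w|^{p-1}+|\bff|^{p-1})\,dz
+\la^p|E_\la^c|.
\]
Your inequality would require $g^p\apprle |\nabla u|^p+|\bff|^p$ pointwise on $\{g>\la\}$, which fails since $g$ is a maximal function.

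Second, and more seriously, your proposed source of the small constant $\de$ is wrong. Lemma \ref{high_very_weak} is a reverse H\"older inequality for solutions of the homogeneous equation; it does not convert a blown-up constant $C_\ast/\be$ into an arbitrarily small one, and it plays no role in this theorem. The paper's mechanism is entirely different and internal to the Fubini step: multiplying the displayed inequality by $\la^{-1-\be}$ and integrating over $(0,\infty)$, the left-hand side becomes
\[
K_1=\frac{1}{\be}\iint_{K_{4\rho}}g^{-\be}|\nabla v|^2(|\nabla u|^2+|\nabla w|^2)^{\frac{p-2}{2}}\,dz
\]
(the $1/\be$ arises because the integration is over $\{\la\ge g\}$), while the bad-set terms $K_3,K_4$ on the right carry $O(1)$ constants (their Fubini integration is over $\{\la\le g\}$ and yields $\frac{1}{1-\be}$ or $\frac{1}{p-\be}$). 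A Young inequality then gives $\iint|\nabla v|^{p-\be}\le C(\ep_1)\be K_1+\ep_1\iint(|\nabla v|^{p-\be}+|\nabla u|^{p-\be})+C(\ep_1)\iint|\bff|^{p-\be}$, and since $K_1\apprle K_2+K_3+K_4$ with $K_3+K_4\apprle\iint(|\nabla v|^{p-\be}+|\nabla u|^{p-\be}+|\bff|^{p-\be})$, the coefficient in front of $\iint|\nabla u|^{p-\be}$ is $\ep_1+C(\ep_1)\be$. Choosing $\ep_1$ small and then $\be$ small makes this $\le\de$. The smallness comes from the \emph{asymmetry} between the Fubini constants on the good set versus the bad set, not from any external higher-integrability lemma.
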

\begin{proof}
Consider the following cut-off function $\zv \in C^{\infty}(\mft -(4\rho)^2,\infty)$ such that $0 \leq \zv(t) \leq 1$ and 
\begin{equation*}
\label{def_zv}
\zv(t) = \left\{ \begin{array}{ll}
                1 & \text{for} \ t \in (\mft -(4\rho)^2+\ve,\mft +(4\rho)^2-\ve),\\
                0 & \text{for} \ t \in (-\infty,\mft -(4\rho)^2)\cup (\mft +(4\rho)^2,\infty).
                \end{array}\right.
\end{equation*}	

It is easy to see that 
\begin{equation*}
\label{bound_zv}
\begin{array}{c}
\zv'(t) = 0 \ \txt{for} \  t \in (-\infty,\mft -(4\rho)^2) \cup (\mft -(4\rho)^2+\ve,\mft +(4\rho)^2-\ve)\cup (\mft +(4\rho)^2,\infty), \\
|\zv'(t)| \leq \frac{c}{\ve}\  \txt{for} \  t \in (\mft -(4\rho)^2,\mft -(4\rho)^2+\ve) \cup (\mft +(4\rho)^2-\ve,\mft -(4\rho)^2).
\end{array}
\end{equation*}
Without loss of generality, we shall always take $2h \leq \ve$, since we will take limits in the following order $\lim_{\ve \rightarrow 0} \lim_{h \rightarrow 0}$.

Let us apply the results of Section \ref{section_three} with $\varphi = u$, $\phi = w$, $\vec{f} = \bff$ and $\vec{g} = 0$ over $Q_{\rho,s}(\mfz) = K_{4\rho}(\mfz)$ to get a Lipschitz test function $\vlh$ satisfying Lemma \ref{crucial_lemma}. From Lemma \ref{lemma3.15}, we have $\vlh \in C^{0,1}(K_{4\rho}(\mfz))$ and thus we shall use $\vlh(z) \zv(t)$ as a test function to get
\begin{equation*}
\label{5.13}
\begin{array}{c}
L_1 + L_2 :=\iint_{K_{4\rho}(\mfz)} \ddt{[u-w]_h} \vlh \zv \ dx \ dt + \iint_{K_{4\rho}(\mfz)} \iprod{[\aa(x,t,\nabla u) - \aa(x,t,\nabla w)]_h}{\nabla \vlh} \zv \ dx \ dt \\
\hspace*{6cm} = \iint_{K_{4\rho}(\mfz)} \iprod{[|\bff|^{p-2} \bff]_h}{\nabla \vlh} \zv  \ dx \ dt =:L_3.
\end{array}
\end{equation*}

Let us recall from Section \ref{section_three} the following: for a fixed $1<q < p-2\be$, we have
\[
g(z) := \mm \lbr \lbr[[] |\nabla u - \nabla w|^q  + |\nabla u|^q + |\nabla w|^q + |\bff|^q \rbr[]] \lsb{\chi}{K_{4\rho}(\mfz)}\rbr^{\frac{1}{q}},
\]
where $\mm$ is as defined in \eqref{par_max} and $\elam = \{z \in \RR^{n+1}: g(z) \leq \la\}$. \emph{Note that at this point, we have not really made any choice of $\be$.}

From the strong Maximal function estimates (see \cite[Lemma 7.9]{Gary} for the proof), we have
\begin{equation}
\label{max_g_fde}
\begin{array}{rcl}
\| g\|_{L^{p-\be}(\RR^{n+1})} & \apprle &   \|\nabla u \|_{L^{p-\be}(K_{4\rho}(\mfz))}+ \|\nabla w\|_{L^{p-\be}(K_{4\rho}(\mfz))} + \| \bff\|_{L^{p-\be}(K_{4\rho}(\mfz))} + \| \nabla u - \nabla w\|_{L^{p-\be}(K_{4\rho}(\mfz))}  \\
& \apprle &    \|\nabla u\|_{L^{p-\be}(K_{4\rho}(\mfz))} + \| \bff\|_{L^{p-\be}(K_{4\rho}(\mfz))} + \| \nabla u - \nabla w\|_{L^{p-\be}(K_{4\rho}(\mfz))}.
\end{array}
\end{equation}

\begin{description}
\item[Estimate for $L_1$:] \begin{equation}
   \label{6.38}
   \begin{array}{ll}
    L_1     & = \int_{\mft - (4\rho)^2}^{\mft + (4\rho)^2} \int_{{\Om_{4\rho}(\mfx)}} \dds{v_h(y,s)}  \vlh(y,s)\zv(s) \ dy\ ds \\
    & \qquad  + \int_{\mft - (4\rho)^2}^{\mft + (4\rho)^2} \int_{{\Om_{4\rho}(\mfx)}}  \frac{d{\lbr \lbr[[](v_h)^2 - (\vlh - v_h)^2\rbr[]]\zv(s) \rbr }}{ds} \ dy \ ds \\
    & \qquad - \int_0^{\mft + (4\rho)^2} \int_{{\Om_{4\rho}(\mfx)}} \dds{\zv} \lbr v_h^2 - (\vlh - v_h)^2 \rbr \ dy \ ds\\
    & := J_2 + J_1(\mft + (4\rho)^2) - J_1(\mft - (4\rho)^2) - J_3,
   \end{array}
  \end{equation}
  where we have set 
  \begin{equation*}
  \label{def_i_1}J_1(s) := \frac12 \int_{{\Om_{4\rho}(\mfx)}} ( (v_h)^2 - (\vlh - v_h)^2 ) (y,s) \zv(s) \ dy.
  \end{equation*}
Note that $J_1(\mft - (4\rho)^2) = J_1(\mft + (4\rho)^2) =0$ since $\zv(\mft - (4\rho)^2) =\zv(\mft + (4\rho)^2) =  0$.

Form Lemma \ref{lemma3.14} applied with $\vt =1$, we have the bound
\begin{equation}
    \label{6.39}
    \begin{array}{ll}
     |J_2| & \apprle \iint_{K_{4\rho}(\mfz)\setminus \elam}   \left| \dds{\vlh}  (\vlh-v_h)\right| \ dy \ ds \apprle\la^p |\RR^{n+1} \setminus \elam| .
    \end{array}
   \end{equation}

\item[Estimate for $L_2$:] We split $L_2$ and make use of the fact that $\vlh(z) = v_h(z)$ for all $z\in \elam\cap K_{4\rho}(\mfz).$ 
\begin{equation}
\label{5.17}
\begin{array}{ll}
L_2 & = \iint_{K_{4\rho}(\mfz)\cap \elam} \iprod{[\aa(x,t,\nabla u) - \aa(x,t,\nabla w)]_h}{\nabla \vlh} \zv\ dz  \\
& \qquad + \iint_{K_{4\rho}(\mfz)\setminus \elam} \iprod{[\aa(x,t,\nabla u) - \aa(x,t,\nabla w)]_h}{\nabla \vlh} \zv\ dz\\
& = \iint_{K_{4\rho}(\mfz)\cap \elam} \iprod{[\aa(x,t,\nabla u) - \aa(x,t,\nabla w)]_h}{\nabla [u-w]_h} \zv\ dz  \\
& \qquad + \iint_{K_{4\rho}(\mfz)\setminus \elam} \iprod{[\aa(x,t,\nabla u) - \aa(x,t,\nabla w)]_h}{\nabla \vlh} \zv\ dz\\
& = L_2^1 + L_2^2.
\end{array}
\end{equation}

\begin{description}
\item[Estimate for $L_2^1$:] Using ellipticity, we get
\begin{equation}
\label{5.18}
\begin{array}{ll}
L_2^1 & = \iint_{K_{4\rho}(\mfz)\cap \elam} \iprod{[\aa(x,t,\nabla u) - \aa(x,t,\nabla w)]_h}{\nabla [u-w]_h} \zv\ dz \\
& \apprge \iint_{K_{4\rho}(\mfz) \cap \elam} \left[|\nabla u-w|^2 \lbr |\nabla u|^2 + |\nabla w|^2 \rbr\right]_h^{\frac{p-2}{2}} \zv \ dz.
\end{array}
\end{equation}

\item[Estimate for $L_2^2$:] Using the bound  from Lemma \ref{lemma3.9}, we get
\begin{equation}
\label{5.19}
\begin{array}{ll}
L_2^2 & \apprle \iint_{K_{4\rho}(\mfz)\setminus \elam} \lbr[|][\aa(x,t,\nabla u) - \aa(x,t,\nabla w)]_h\rbr[|] |\nabla \vlh| \ dz\\
& \apprle \la \iint_{K_{4\rho}(\mfz)\setminus \elam} |\nabla [u]_h|^{p-1} + |\nabla [w]_h|^{p-1}\ dz.
\end{array}
\end{equation}

\end{description}

\item[Estimate for $L_3$:] Analogous to estimate for $L_2$, we split $L_3$ into integrals over $\elam$ and $\elam^c$ followed by making use of \eqref{lipschitz_extension} and Lemma \ref{lemma3.9} to get
\begin{equation}
\label{5.20}
L_3 \apprle \iint_{K_{4\rho}(\mfz) \cap \elam} [|\bff|^{p-1}]_h |\nabla [u-w]_h| \ dz + \la \iint_{K_{4\rho}(\mfz)\setminus \elam}  |[\bff]_h|^{p-1}\ dz.
\end{equation}

%
%
%

\end{description}

Combining  \eqref{6.39} into \eqref{6.38} followed by \eqref{5.18} and \eqref{5.19} into \eqref{5.17} and making use of \eqref{6.38}, \eqref{6.39} and \eqref{5.20}, we get 
\begin{equation}
\label{combined_1}
\begin{array}{l}
- \int_{\mft - (4\rho)^2}^{\mft +(4\rho)^2} \int_{{\Om_{4\rho}(\mfx)}} \dds{\zv} \lbr v_h^2 - (\vlh - v_h)^2 \rbr \ dy \ ds +  \iint_{K_{4\rho}(\mfz) \cap \elam} |\nabla [u-w]_h|^2 \lbr |\nabla [u]_h|^2 +|\nabla [w]_h|^2 \rbr^{\frac{p-2}{2}}\zv \ dz \\
\hfill \apprle \iint_{K_{4\rho}(\mfz)\cap \elam} [|\bff|^{p-1}]_h{|\nabla [u-w]_h|} \ dz +  \la \iint_{K_{4\rho}(\mfz)\setminus \elam} |\nabla [u]_h|^{p-1} + |\nabla [w]_h|^{p-1}+ |[\bff]_h|^{p-1}\ dz \\
\qquad\qquad\qquad\qquad + \la^p |\RR^{n+1} \setminus \elam| .
\end{array}
\end{equation}

In order to estimate $- \int_{\mft - (4\rho)^2}^{\mft +(4\rho)^2} \int_{{\Om_{4\rho}(\mfx)}} \dds{\zv} \lbr v_h^2 - (\vlh - v_h)^2 \rbr \ dy \ ds$, we
take limits first in $h\searrow 0$ followed by $\ve \searrow 0$ to get
\begin{equation}
\label{5.22}
\begin{array}{ll}
- \int_{\mft - (4\rho)^2}^{\mft +(4\rho)^2} \int_{{\Om_{4\rho}(\mfx)}} \dds{\zv} \lbr v_h^2 - (\vlh - v_h)^2 \rbr \ dy \ ds  \xrightarrow{\lim\limits_{\ve \searrow 0}\lim\limits_{h \searrow 0}} & \int_{{\Om_{4\rho}(\mfx)}} (v^2 - (\vl - v)^2 )(x,\mft + (4\rho)^2) \ dx \\
& - \int_{{\Om_{4\rho}(\mfx)}} (v^2 - (\vl - v)^2 )(x,\mft - (4\rho)^2) \ dx.
\end{array}
\end{equation}

For the second term on the right of \eqref{5.22}, we observe that on $\elam$, we have $\vl = v$ and on $\elam^c$ and we also have $\vl(\cdot,\mft - (4\rho)^2) = v(\cdot,\mft - (4\rho)^2) = 0$. Thus, the second term vanishes because on $\elam$, we can use the initial boundary condition and on $\elam^c$, it is zero by construction. Thus we get
\begin{equation}
\label{5.23}
- \int_{\mft - (4\rho)^2}^{\mft +(4\rho)^2} \int_{{\Om_{4\rho}(\mfx)}} \dds{\zv} \lbr v_h^2 - (\vlh - v_h)^2 \rbr \ dy \ ds  \xrightarrow{\lim\limits_{\ve \searrow 0} \lim\limits_{h \searrow 0}}  \int_{{\Om_{4\rho}(\mfx)}} (v^2 - (\vl - v)^2 )(x,\mft + (4\rho)^2) \ dx.
\end{equation}
Thus using \eqref{5.23} into \eqref{combined_1} gives
\begin{equation}
\label{combined_1_1}
\begin{array}{l}
 \int_{{\Om_{4\rho}(\mfx)}} (v^2 - (\vl - v)^2 )(x,\mft + (4\rho)^2) \ dx +  \iint_{K_{4\rho}(\mfz) \cap \elam} |\nabla [u-w]_h|^2 \lbr |\nabla [u]_h|^2 +|\nabla [w]_h|^2 \rbr^{\frac{p-2}{2}}\zv \ dz \\
\hfill \apprle \iint_{K_{4\rho}(\mfz)\cap \elam} [|\bff|^{p-1}]_h{|\nabla [u-w]_h|} \ dz +  \la \iint_{K_{4\rho}(\mfz)\setminus \elam} |\nabla [u]_h|^{p-1} + |\nabla [w]_h|^{p-1}+ |[\bff]_h|^{p-1}\ dz\\
\qquad\qquad\qquad\qquad + \la^p |\RR^{n+1} \setminus \elam| .
\end{array}
\end{equation}

In fact, if we consider a cut-off function $\zv^{t_0} (\cdot)$ for some $t_0 \in (\mft - (4\rho)^2,\mft + (4\rho)^2)$, where 
\begin{equation*}
\zv^{t_0}(t) = \left\{ \begin{array}{ll}
                1 & \text{for} \ t \in (-t_0+\ve,t_0-\ve),\\
                0 & \text{for} \ t \in (-\infty,-t_0)\cup (t_0,\infty).
                \end{array}\right.
\end{equation*}
we get the following analogue of \eqref{combined_1_1}
\begin{equation}
\label{combined_1_new_11}
\begin{array}{l}
\int_{{\Om_{4\rho}(\mfx)}} (v^2 - (\vl - v)^2 )(x,t_0) \ dx +  \int_{-t_0}^{t_0} \int_{{\Om_{4\rho}(\mfx)} \cap \elam^t} |\nabla (u-w)|^2 \lbr |\nabla u|^2 +|\nabla w|^2 \rbr^{\frac{p-2}{2}} \ dz  \\
\qquad \qquad  \apprle  \iint_{K_{4\rho}(\mfz)\cap \elam} |\bff|^{p-1}|\nabla (u-w)| \ dz +  \la \iint_{K_{4\rho}(\mfz)\setminus \elam} |\nabla u|^{p-1} + |\nabla w|^{p-1}+ |\bff|^{p-1}\ dz\\
\qquad\qquad\qquad\qquad + \la^p |\RR^{n+1} \setminus \elam| .
\end{array}
\end{equation}


Using  Lemma \ref{crucial_lemma},  we get for any $t \in (\mft - (4\rho)^2,\mft + (4\rho)^2)$, the estimate
\begin{equation}
 \label{4.13}
  \begin{array}{ll}
    \int_{{\Om_{4\rho}(\mfx)}} | (v)^2 - (\vl - v)^2 | (y,t) \ dy   
   & \apprge \int_{\elam^t} | v (x,t)|^2 \ dx   - \la^p |\RR^{n+1} \setminus \elam|.
  \end{array}
 \end{equation}

 Since $\int_{\elam^t} | v (x,t)|^2 \ dx$ occurs on the left hand side and is positive, we can ignore this term. Thus combining \eqref{4.13} with \eqref{combined_1_new_11}, we get
\begin{equation}
\label{fully_combined}
\begin{array}{ll}
\iint_{K_{4\rho}(\mfz) \cap \elam} |\nabla (u-w)|^2 \lbr |\nabla u|^2 +|\nabla w|^2 \rbr^{\frac{p-2}{2}} \ dx \ dt &\apprle \iint_{K_{4\rho}(\mfz)\cap \elam} |\bff|^{p-1}|\nabla (u-w)| \ dz \\
 &\quad  +  \la \iint_{K_{4\rho}(\mfz)\setminus \elam} |\nabla u|^{p-1} + |\nabla w|^{p-1}+ |\bff|^{p-1}\ dz\\
& \quad + \la^p |\RR^{n+1} \setminus \elam| .
\end{array}
\end{equation} 
 
Let us now multiply \eqref{fully_combined}  with $\la^{-1-\be}$ and integrating over $(0,\infty)$ with respect to $\la$, we get
 \begin{equation}
 \label{K_expression}
K_1 +K_2\apprle K_3 + K_4,
 \end{equation}
 where we have set
 \begin{equation*}
  \begin{array}{@{}r@{}c@{}l@{}}
  K_1 \ &:=& \ \int_{0}^{\infty} \la^{-1-\be} \iint_{K_{4\rho}(\mfz) \cap \elam} |\nabla (u-w)|^2 \lbr |\nabla u|^2 +|\nabla u|^2 \rbr^{\frac{p-2}{2}}\ dz \ d\la,\\
  K_2 \ &:=& \ \int_{0}^{\infty} \la^{-1-\be} \iint_{K_{4\rho}(\mfz) \cap \elam} |\bff|^{p-1}|\nabla (u-w)| \ dz \ d\la,  \\
  K_3 \ &:=& \ \int_0^{\infty} \la^{-\be} \iint_{K_{4\rho}(\mfz)\setminus \elam} |\nabla u|^{p-1} + |\nabla w|^{p-1}+ |\bff|^{p-1}\ dz \ d\la, \\
  K_4\  &:=& \ \int_{0}^{\infty} \la^{-1-\be}  \la^p |\RR^{n+1} \setminus \elam| \  d\la. \\
  \end{array}
 \end{equation*}

 \begin{description}
 \item[Estimate for $K_1$:] Applying Fubini, we get
 \begin{equation*}
 \label{3.13}
 K_1 \apprge \frac{1}{\be} \iint_{K_{4\rho}(\mfz)} g(z)^{-\be}|\nabla (u-w)|^2 \lbr |\nabla u|^2 +|\nabla u|^2 \rbr^{\frac{p-2}{2}}\ dz.
 \end{equation*}

 Using Young's inequality along with \eqref{abounded} and  \eqref{max_g_fde}, we get for any $\ep_1 >0$, the estimate
 \begin{equation}
 \label{bound_K_1}
 \begin{array}{rcl}
 \iint_{K_{4\rho}(\mfz)} |\nabla u - \nabla w|^{p-\be} \ dz & \apprle &  C(\ep_1) \be K_1  + \ep_1 \iint_{K_{4\rho}(\mfz)}  |\nabla u-\nabla w|^{p-\be} + |\nabla u|^{p-\be} \ dz  \\
 & & \qquad +  C_3(\ep_1) \iint_{K_{4\rho}(\mfz)} |\bff|^{p-\be}\, dz.
 \end{array}
 \end{equation}

 \item[Estimate for $K_2$:] Again by Fubini, we get
 \begin{equation*}
 \label{3.18}
 K_2  = \frac{1}{\be} \iint_{K_{4\rho}(\mfz)} g(z)^{-\be} \iprod{|\bff|^{p-2} \bff}{\nabla u - \nabla w} \ dz.
 \end{equation*}
From the definition of $g(z)$, we see that for $z \in K_{4\rho}(\mfz)$, we have $g(z) \geq |\nabla u - \nabla w|(z)$, which implies $g(z)^{-\be} \leq |\nabla u - \nabla w|^{-\be}(z)$. Now we apply Young's inequality, for any $\ep_2 >0$, we get
\begin{equation}
\label{3.19}
\begin{array}{ll}
K_2 & \leq \frac{1}{\be} \iint_{K_{4\rho}(\mfz)} |\nabla u - \nabla w|^{1-\be} |\bff|^{p-1} \ dz \\
& \apprle \frac{C(\ep_2)}{\be} \iint_{K_{4\rho}(\mfz)} |\bff|^{p-\be} \ dz + \frac{\ep_2}{\be} \iint_{K_{4\rho}(\mfz)} |\nabla u - \nabla w|^{p-\be} \ dz.
\end{array}
\end{equation}

 \item[Estimate for $K_3$:] Again applying Fubini, we get
 \begin{equation*}
 \label{3.20}
 K_3 = \frac{1}{p-\be} \iint_{K_{4\rho}(\mfz)} g(z)^{1-\be} \lbr |\nabla u|^{p-1} +|\nabla w|^{p-1} + |\bff|^{p-1} \rbr \ dz.
 \end{equation*}
 Applying Young's inequality followed by making use of \eqref{max_g_fde}, we get
 \begin{equation}
 \label{3.21}
 \begin{array}{ll}
 K_3  
 & \apprle \iint_{K_{4\rho}(\mfz)}  |\nabla u - \nabla w|^{p-\be} + |\nabla u|^{p-\be} + |\bff|^{p-\be}\ dz. \\
 \end{array}
 \end{equation}

 \item[Estimate for $K_4$:] Applying the layer cake representation followed by using \eqref{max_g_fde}, we get
 \begin{equation}
 \label{3.22}
 \begin{array}{ll}
 K_4 & = \frac{1}{p-\be} \iint_{\RR^{n+1}} g(z)^{p-\be} \ dz \\
 & \apprle \iint_{K_{4\rho}(\mfz)} |\nabla u - \nabla w|^{p-\be} + |\nabla u|^{p-\be}  + |\bff|^{p-\be} \ dz.
 \end{array}
 \end{equation}

 \end{description}

 We now combine \eqref{bound_K_1}, \eqref{3.19}, \eqref{3.21} and \eqref{3.22} into \eqref{K_expression}, we get
 \begin{equation*}
 \label{3.23}
 \begin{array}{rcl}
  \iint_{K_{4\rho}(\mfz)} |\nabla u - \nabla w|^{p-\be} \ dz & \apprle &  \lbr[[] \ep_1 + C(\ep_1) (\ep_2 + \be) \rbr[]] \iint_{K_{4\rho}(\mfz)}  |\nabla u-\nabla w|^{p-\be}  \ dz   +  C(\ep_1,\ep_2,\be) \iint_{K_{4\rho}(\mfz)} |\bff|^{p-\be}\, dz \\
 && \qquad +  \lbr [[]\ep_1 + C(\ep_1) \be\rbr[]] \iint_{K_{4\rho}(\mfz)} |\nabla u|^{p-\be} \ dz.\\
 \end{array}
 \end{equation*}
 Choosing $\ep_1$ small followed by $\ep_2$ and $\be$, for any $\de > 0$, we get a $\be_1 = \be_1(n,p,\lamot,\de)$ such that for any $\be \in (0,\be_1)$, there holds
 \[
 \fiint_{K_{4\rho}(\mfz)} |\nabla u - \nabla w|^{p-\be} \ dz \leq \de \fiint_{K_{4\rho}(\mfz)} |\nabla u|^{p-\be} \ dz + C(n,p,\be,\lamot,\de) \fiint_{K_{4\rho}(\mfz)} |\bff|^{p-\be}.
 \]
This completes the proof of the theorem.
 \end{proof}

\subsection{Second comparison estimate}
In this subsection, we will prove an improved comparison estimate between solutions of \eqref{wapprox_int} and \eqref{vapprox_bnd}. 
\begin{theorem}
\label{second_difference_estimate}
Let $w$ be a weak solution of \eqref{wapprox_int} and $v$ be the unique  weak solution of \eqref{vapprox_bnd}, then there exists an $\be_2 = \be_2(\lamot,p,n,m_e,\ve) \in (0,1)$ such that for any $\be \in (0,\be_2)$ and $\ve >0$, there exists a  $C = C(n,\lamot,p,\ve)>0$ and $\sigma_1 = \sigma_1(\lamot,p)$ such that the following estimate holds:
\begin{equation*}
\label{lipschitz_difference_estimate_B}
\lbr \fiint_{K_{3\rho}(\mfz)} |\nabla w - \nabla v|^{p-\be} \ dz\rbr^{\frac{p}{p-\be}} \leq \ve \lbr \fiint_{K_{4\rho}(\mfz)} |\nabla w|^{p-\be} \ dz \rbr^{1+\be \tilde{\vt}_1}  + C [\aa]_{2,R_0}^{\sigma_1} \lbr \fiint_{K_{4\rho}(\mfz)} |\nabla w|^{p-\be} \ dz \rbr^{\frac{(1+\be\tilde{\vt}_1)(1+\be\tilde{\vt}_2) p}{p+\be}}.
\end{equation*}
Here $\tilde{\vt}_1$ and $\tilde{\vt}_2$ are from Lemma \ref{high_weak} and Lemma \ref{high_very_weak}.
\end{theorem}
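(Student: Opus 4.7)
The plan is to adapt the argument of Theorem~\ref{first_difference_estimate} with $(u,w)$ replaced by $(w,v)$ and the nonlinearity $\aa$ replaced by its $x$-average $\aaa_{B_{3\rho}(\mfx)}(t,\cdot)$. The key observation is that $w$ itself solves the averaged equation up to a divergence-form forcing; rewriting \eqref{wapprox_int} as
$$
w_t - \dv \aaa_{B_{3\rho}(\mfx)}(t,\nabla w) \;=\; \dv F, \qquad F(z) := \aa(x,t,\nabla w)-\aaa_{B_{3\rho}(\mfx)}(t,\nabla w),
$$
the definition of $\Theta$ yields $|F(z)|\leq \Theta(\aa, Q_{3\rho,(3\rho)^2}(\mfz))(z)\,|\nabla w(z)|^{p-1}$. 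Thus the pair $(w,v)$ takes the role of $(\varphi,\phi)$ of Section~\ref{section_three} with $|\vec f|^{p-2}\vec f \leftrightarrow F$ (so $|\vec f|^q \leq \Theta^{q/(p-1)}|\nabla w|^q$) and $\vec g = 0$. Crucially, $\aaa_{B_{3\rho}(\mfx)}(t,\cdot)$ inherits the pointwise ellipticity and growth bounds \eqref{abounded}--\eqref{bbounded} from $\aa$, so the entire Lipschitz-truncation toolbox of Section~\ref{section_three} applies unchanged to $v - w$ on $K_{3\rho}(\mfz)$.

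Next I would run the proof of Theorem~\ref{first_difference_estimate} essentially verbatim: construct the Lipschitz extension $\vlh$ at level $\la$, test the difference of the Steklov-averaged formulations with $\vlh\zv$, split each integral over $\elam$ and $\elam^c$, use monotonicity on $\elam$ and the bounds of Lemma~\ref{lemma3.9} on $\elam^c$, apply Lemma~\ref{crucial_lemma} to kill the initial-time term, then multiply by $\la^{-1-\be}$ and integrate over $\la\in(0,\infty)$. The maximal function defining $g$ now contains $\Theta^{q/(p-1)}|\nabla w|^q$ in place of $|\bff|^q$, so the layer-cake computation bounds $\iint|\nabla(w-v)|^{p-\be}$ by a multiple of
$$
\fiint_{K_{4\rho}(\mfz)} \Theta(\aa,Q_{3\rho,(3\rho)^2}(\mfz))^{\frac{p-\be}{p-1}} |\nabla w|^{p-\be}\,dz \;+\; \ve\fiint_{K_{4\rho}(\mfz)} |\nabla w|^{p-\be}\,dz,
$$
where the $\ve$-term arises from the Young-inequality absorption of the $\la^p|\elam^c|$ contribution.

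To reach the stated form I would decouple $\Theta$ from $|\nabla w|$ by H\"older: pick exponents so that the $\Theta$-factor is raised to a power $\tau \geq 2$ and apply \eqref{small_aa} (combined with the interpolation remark after it), giving a factor $[\aa]_{2,R_0}^{\sigma_1}$ for some $\sigma_1=\sigma_1(\lamot,p)>0$. The conjugate exponent places $|\nabla w|$ in an $L^{\vartheta}$-space with $\vartheta>p-\be$; I would first use Lemma~\ref{high_weak} (higher integrability above the natural exponent) to trade this for a power $1+\be\tilde\vt_1$ of $\fiint|\nabla w|^{p}$, and then Lemma~\ref{high_very_weak} (improved higher integrability below the natural exponent) to trade $\fiint|\nabla w|^{p}$ for a power $1+\be\tilde\vt_2$ of $\fiint|\nabla w|^{p-\be}$. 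Chaining the two factors reproduces exactly the compound exponent $\frac{(1+\be\tilde\vt_1)(1+\be\tilde\vt_2)p}{p+\be}$ and supplies the constraint $\be\in(0,\be_2)$ for $\be_2$ small enough that both higher-integrability lemmas are applicable at the exponent produced by H\"older.

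The main obstacle is the exponent bookkeeping: one must simultaneously (i) keep the $\Theta$-factor in an $L^\tau$-space with $\tau\geq 2$ so that the smallness $[\aa]_{2,R_0}\leq \ga$ can be exploited, (ii) keep the corresponding $|\nabla w|$-exponent inside the range $[p-\be, p(1+\tilde\be_1)]$ so that Lemma~\ref{high_weak} gives a usable reverse H\"older, and (iii) ensure $\be$ is small enough that the final composite exponent $\frac{(1+\be\tilde\vt_1)(1+\be\tilde\vt_2)p}{p+\be}$ is strictly greater than $1$, so that the estimate is a genuine a priori bound rather than a tautology. A secondary difficulty, as in Section~\ref{section_three}, is the subquadratic case $\tfrac{2n}{n+2}<p<2$, where the monotonicity step requires the degenerate weight $(|\nabla w|^2+|\nabla v|^2)^{(p-2)/2}$ to be handled via a further Young-inequality splitting before the $\la$-integration.
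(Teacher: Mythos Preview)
Your approach is correct in principle but takes a substantially longer route than the paper. The paper's proof is essentially three lines: it invokes the already-known $L^p$ comparison estimate from \cite[Lemma 2.8]{duong2017global},
\[
\fiint_{K_{3\rho}(\mfz)} |\nabla w - \nabla v|^{p} \, dz \leq \ve \fiint_{K_{3\rho}(\mfz)} |\nabla w|^{p} \, dz + C [\aa]_{2,R_0}^{\sigma_1} \Bigl( \fiint_{K_{3\rho}(\mfz)} |\nabla w|^{p+\be} \, dz \Bigr)^{\frac{p}{p+\be}},
\]
then chains Lemma~\ref{high_weak} and Lemma~\ref{high_very_weak} to replace the $L^{p+\be}$ and $L^p$ averages of $|\nabla w|$ by powers of its $L^{p-\be}$ average, and finally applies H\"older on the left to pass from $L^p$ to $L^{p-\be}$. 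No Lipschitz truncation is needed.

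The reason the paper can do this is the point you overlooked: both $w$ and $v$ are \emph{genuine weak solutions} with $\nabla w,\nabla v\in L^p$, so $w-v$ is already an admissible test function at the natural exponent and the $L^p$ comparison is standard. The entire machinery of Section~\ref{section_three} exists precisely to manufacture a test function when working \emph{below} the natural exponent; invoking it here is unnecessary. Your rewriting of $w$ as a solution of the averaged equation with forcing $F=\aa-\aaa$ and your H\"older decoupling of $\Theta$ from $|\nabla w|$ are exactly the ingredients inside the cited $L^p$ estimate, so you are in effect re-proving that lemma at the $p-\be$ level rather than quoting it and descending afterwards. Note also that your route would naturally produce the estimate with exponent $1$ on the $\ve$-term (i.e.\ $\ve\fiint|\nabla w|^{p-\be}$), not the exponent $1+\be\tilde\vt_1$ that appears in the statement; the latter is an artifact of the paper's passage through $L^p$, and while your form is arguably cleaner, it does not literally match what is asserted.
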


\begin{proof}

Since we are in the setting of weak solutions, from \cite[Lemma 2.8]{duong2017global}, we have the following estimate: for any $\ve >0$, there exists a $C = C(n,\lamot,p,\ve)>0$ and $\sigma_1 = \sigma_1(\lamot,p)$ such that
\begin{equation}
\label{est1B}
\fiint_{K_{3\rho}(\mfz)} |\nabla w - \nabla v|^{p} \ dz \leq \ve \fiint_{K_{3\rho}(\mfz)} |\nabla w|^{p} \ dz + C [\aa]_{2,R_0}^{\sigma_1} \lbr \fiint_{K_{3\rho}(\mfz)} |\nabla w|^{p+\be} \ dz \rbr^{\frac{p}{p+\be}}.
\end{equation}

Since $w$ solves the homogeneous equation \eqref{wapprox_int}, we can control the right hand side of \eqref{est1B} by using Lemma \ref{high_weak} and  Lemma \ref{high_very_weak}. For $\be_2 := \min\{ \tilde{\be}_1, \tilde{\be}_2\}$, for any $\be \in (0,\be_2)$, there holds
\begin{equation}
\label{est2B}
\fiint_{K_{3\rho}(\mfz)} |\nabla w|^{p+\be} \ dz \apprle \lbr \fiint_{K_{\frac72\rho}(\mfz)} |\nabla w|^{p} \ dz \rbr^{1+\be \tilde{\vt}_1} \apprle \lbr \fiint_{K_{4\rho}(\mfz)} |\nabla w|^{p-\be} \ dz \rbr^{\lbr 1+\be \tilde{\vt}_2\rbr \lbr  1+\be \tilde{\vt}_1\rbr},
\end{equation}
where $\tilde{\vt}_1$ and $\tilde{\vt}_2$ are from Lemma \ref{high_weak} and Lemma \ref{high_very_weak} respectively.

We now combine \eqref{est2B} and \eqref{est1B} to get
\begin{equation*}
\label{est3B}
\fiint_{K_{3\rho}(\mfz)} |\nabla w - \nabla v|^{p} \ dz \leq \ve \lbr \fiint_{K_{4\rho}(\mfz)} |\nabla w|^{p-\be} \ dz \rbr^{1+\be \tilde{\vt}_1}  + C [\aa]_{2,R_0}^{\sigma_1} \lbr \fiint_{K_{4\rho}(\mfz)} |\nabla w|^{p-\be} \ dz \rbr^{\frac{(1+\be\tilde{\vt}_1)(1+\be\tilde{\vt}_2) p}{p+\be}}.
\end{equation*}

A simple application of H\"older's inequality now gives
\begin{equation*}
\lbr \fiint_{K_{3\rho}(\mfz)} |\nabla w - \nabla v|^{p-\be} \ dz\rbr^{\frac{p}{p-\be}} \leq \ve \lbr \fiint_{K_{4\rho}(\mfz)} |\nabla w|^{p-\be} \ dz \rbr^{1+\be \tilde{\vt}_1}  + C [\aa]_{2,R_0}^{\sigma_1} \lbr \fiint_{K_{4\rho}(\mfz)} |\nabla w|^{p-\be} \ dz \rbr^{\frac{(1+\be\tilde{\vt}_1)(1+\be\tilde{\vt}_2) p}{p+\be}}.
\end{equation*}

\end{proof}

\subsection{Interior approximation estimate}
In this subsection, we will prove the interior approximation lemma:

\begin{lemma}
Let $\be \in (0,\be_0)$ for $\be_0= \min\{\be_1,\be_2\}$ where $\be_1$ is from Theorem \ref{first_difference_estimate} and $\be_2$ is from Theorem \ref{second_difference_estimate}.  For each $\ve >0$, there exists a $\de>0$ (possibly depending on $\ve$) such that the following holds true: Assume $u$ is a weak solution of \eqref{main} satisfying 
\begin{equation}\label{hyp_1_int}
\fiint_{K_{4\rho}(\mfz)} |\nabla u|^{p-\be} \ dz \leq 1,
\end{equation}
then under the condition 
\begin{equation}\label{cond_1_int}
\fiint_{K_{4\rho}(\mfz)} |\bff|^{p-\be} \ dz \leq \de^{p-\be},
\end{equation}
there exists a weak solution $v$ to \eqref{vapprox_bnd} satisfying
\begin{equation}
\label{conc_1_int}
\|\nabla v\|_{L^{\infty}(K_{2\rho}(\mfz))} \apprle 1, \txt{and} \fiint_{K_{2\rho}(\mfz)} |\nabla u - \nabla v|^{p-\be} \ dz \leq \ve^{p-\be}.
\end{equation}
\end{lemma}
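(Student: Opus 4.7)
\emph{Proof plan.} The strategy is to build the chain of approximations $u \to w \to v$ coming from \eqref{main}, \eqref{wapprox_int} and \eqref{vapprox_bnd}, estimate the two links using Theorem \ref{first_difference_estimate} and Theorem \ref{second_difference_estimate}, and then read off the Lipschitz bound on $\nabla v$ from Lemma \ref{existence_ov} combined with the higher integrability below the natural exponent (Lemma \ref{high_very_weak}). Existence of $w$ and $v$ is already provided by Proposition \ref{ext_sol} together with Lemma \ref{existence_ov}, so it is only the two quantitative conclusions in \eqref{conc_1_int} that need to be produced.

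\emph{Step 1: first comparison.} Fix a small parameter $\de_1 \in (0,1)$ to be chosen later and require $\be < \be_1(\de_1)$ from Theorem \ref{first_difference_estimate}. Combined with \eqref{hyp_1_int} and \eqref{cond_1_int}, that theorem yields
$$\fiint_{K_{4\rho}(\mfz)} |\nabla u - \nabla w|^{p-\be} \, dz \le \de_1 + C\, \de^{p-\be}.$$
A standard triangle inequality then produces a universal bound
$$\fiint_{K_{4\rho}(\mfz)} |\nabla w|^{p-\be} \, dz \le C_0$$
independent of $\de_1, \de \in (0,1)$.

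\emph{Step 2: second comparison and assembly.} Apply Theorem \ref{second_difference_estimate}, using the $(\ga, S_0)$-vanishing hypothesis to bound $[\aa]_{2, R_0} \le \ga$ and Step 1 to bound the base integral $\fiint |\nabla w|^{p-\be}$. Since both exponents $1+\be\tilde{\vt}_1$ and $(1+\be\tilde{\vt}_1)(1+\be\tilde{\vt}_2)p/(p+\be)$ stay of order $1$ as $\be \to 0$, the right-hand side of that estimate is controlled by $C(\ve_2 + \ga^{\sigma_1})$ for any preassigned $\ve_2$ appearing in Theorem \ref{second_difference_estimate}. Hence for any $\eta > 0$, first choosing $\ve_2$ and then $\ga$ small enough yields
$$\fiint_{K_{3\rho}(\mfz)} |\nabla w - \nabla v|^{p-\be} \, dz \le \eta^{p-\be}.$$
Triangle inequality on $K_{2\rho}(\mfz) \subset K_{3\rho}(\mfz)$ and the inclusion $K_{2\rho}(\mfz) \subset K_{4\rho}(\mfz)$ combine Steps 1 and 2 into
$$\fiint_{K_{2\rho}(\mfz)} |\nabla u - \nabla v|^{p-\be} \, dz \le C\bigl(\de_1 + \de^{p-\be} + \eta^{p-\be}\bigr),$$
and choosing $\de_1, \eta$, and finally $\de = \de(\ve)$ in that order gives the second assertion in \eqref{conc_1_int}.

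\emph{Step 3: Lipschitz bound on $\nabla v$.} The function $v$ solves the homogeneous frozen-coefficient equation \eqref{vapprox_bnd} on $K_{3\rho}(\mfz)$, so Lemma \ref{existence_ov} (applied with a translation/scaling ensuring $K_{2\rho}(\mfz) \Subset K_{3\rho}(\mfz)$) gives
$$\|\nabla v\|_{L^\infty(K_{2\rho}(\mfz))} \apprle \lbr \fiint_{K_{3\rho}(\mfz)} |\nabla v|^p \, dz \rbr^{1/p}.$$
To pass from the $L^{p-\be}$ control we have to the $L^p$ norm on the right, apply Lemma \ref{high_very_weak} to the homogeneous solution $v$; the $L^{p-\be}$ integral is then controlled by $\fiint |\nabla w|^{p-\be} + \fiint |\nabla w - \nabla v|^{p-\be} \apprle 1$ thanks to Steps 1--2. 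This delivers the first assertion in \eqref{conc_1_int}.

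\emph{Main obstacle.} The principal subtlety is tracking the cascade of smallness parameters $\de_1 \to \ve_2 \to \ga \to \de$ in the correct order so that each subsequent choice can absorb the error produced by the previous step, and simultaneously ensuring that every $\be$-dependent exponent of the form $1 + \be\tilde{\vt}_j$ occurring in Theorem \ref{second_difference_estimate} and Lemma \ref{high_very_weak} stays uniformly close to $1$. This is exactly where the a priori requirement $\be \in (0, \be_0)$ with $\be_0 = \min\{\be_1, \be_2\}$ sufficiently small is used; the constant $\ga$ in the $(\ga, S_0)$-vanishing hypothesis then enters the final theorem's quantitative dependence.
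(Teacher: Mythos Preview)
Your overall strategy matches the paper's: chain $u\to w\to v$, control the two links by Theorems \ref{first_difference_estimate} and \ref{second_difference_estimate}, and then invoke the interior Lipschitz bound. Steps 1 and 2 are essentially identical to the paper's argument for the second assertion in \eqref{conc_1_int}.

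There is, however, a small but genuine radius-management gap in Step 3. You apply Lemma \ref{existence_ov} so that the $L^\infty$ bound on $K_{2\rho}(\mfz)$ is controlled by $\fiint_{K_{3\rho}(\mfz)}|\nabla v|^p$, and then you want to invoke Lemma \ref{high_very_weak} on $v$ to pass from $L^p$ down to $L^{p-\be}$. But Lemma \ref{high_very_weak} is a reverse H\"older inequality: it bounds an $L^p$ average on a cylinder by an $L^{p-\be}$ average on a \emph{strictly larger} concentric cylinder. Since $v$ is only defined on $K_{3\rho}(\mfz)$, there is no larger cylinder available once you have already enlarged all the way to $K_{3\rho}$. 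As written, the step cannot be executed.

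The fix is easy: insert an intermediate radius, e.g.\ first bound $\|\nabla v\|_{L^\infty(K_{2\rho})}$ by $\bigl(\fiint_{K_{5\rho/2}}|\nabla v|^p\bigr)^{1/p}$ via Lemma \ref{existence_ov}, and then apply Lemma \ref{high_very_weak} to $v$ on the pair $K_{5\rho/2}\subset K_{3\rho}$. Alternatively, do what the paper does: keep the Lipschitz estimate going to $K_{3\rho}$ but avoid reverse H\"older on $v$ altogether. Instead split $\fiint_{K_{3\rho}}|\nabla v|^p \apprle \fiint_{K_{3\rho}}|\nabla v-\nabla w|^p + \fiint_{K_{3\rho}}|\nabla w|^p$, control the first term directly by the $L^p$ version inside the proof of Theorem \ref{second_difference_estimate}, and apply Lemma \ref{high_very_weak} to $w$ (which lives on the larger $K_{4\rho}$, so there is room). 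Either route closes the gap.
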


\begin{proof}
Let us prove each of the assertions of \eqref{conc_1_int} as follows: 
\begin{description}[leftmargin=*]
\item[First estimate in \eqref{conc_1_int}:] From Lemma \ref{existence_ov}, we have existence of a weak solution $v$ solving \eqref{vapprox_bnd} satisfying the estimate
\[
\|\nabla v\|_{L^{\infty}(K_{2\rho}(\mfz))}^p \leq C(n,p,\lamot)  \fiint_{Q_{3\rho}(\mfz)} |\nabla v|^{p} \ dz.
\]
We now estimate the right hand side as follows:
\begin{equation*}
\label{5.47}
\begin{array}{rcl}
\fiint_{Q_{3\rho}(\mfz)} |\nabla v|^{p} \ dz & \overset{\redlabel{5.47.a}{a}}{\apprle} & \fiint_{Q_{3\rho}(\mfz)} |\nabla v -\nabla w|^{p} \ dz + \fiint_{Q_{3\rho}(\mfz)} |\nabla w|^{p} \ dz\\
& \overset{\redlabel{5.47.b}{b}}{\apprle} & ( 1 + \ve) \lbr \fiint_{K_{4\rho}(\mfz)} |\nabla w|^{p-\be} \ dz \rbr^{1+\be \tilde{\vt}_1}  + C [\aa]_{2,S_0}^{\sigma_1} \lbr \fiint_{K_{4\rho}(\mfz)} |\nabla w|^{p-\be} \ dz \rbr^{\frac{(1+\be\tilde{\vt}_1)(1+\be\tilde{\vt}_2) p}{p+\be}} \\
& \overset{\redlabel{5.47.c}{c}}{\apprle} & ( 1 + \ve) \lbr \fiint_{K_{4\rho}(\mfz)} |\nabla w - \nabla u|^{p-\be} \ dz \rbr^{1+\be \tilde{\vt}_1}  \\
&& + C [\aa]_{2,S_0}^{\sigma_1} \lbr \fiint_{K_{4\rho}(\mfz)} |\nabla w - \nabla u|^{p-\be} \ dz \rbr^{\frac{(1+\be\tilde{\vt}_1)(1+\be\tilde{\vt}_2) p}{p+\be}} \\
&& + ( 1 + \ve) \lbr \fiint_{K_{4\rho}(\mfz)} |\nabla u|^{p-\be} \ dz \rbr^{1+\be \tilde{\vt}_1}  + C [\aa]_{2,S_0}^{\sigma_1} \lbr \fiint_{K_{4\rho}(\mfz)} |\nabla u|^{p-\be} \ dz \rbr^{\frac{(1+\be\tilde{\vt}_1)(1+\be\tilde{\vt}_2) p}{p+\be}}.
\end{array}
\end{equation*}
In order to obtain \redref{5.47.a}{a}, we made use of triangle inequality, to obtain \redref{5.47.b}{b}, we made use of Theorem \ref{second_difference_estimate} along with Lemma \ref{high_very_weak} and finally to obtain \redref{5.47.c}{c}, we applied triangle inequality. 

We can control $\fiint_{K_{4\rho}(\mfz)} |\nabla w - \nabla u|^{p-\be} \ dz$ using Theorem \ref{first_difference_estimate} along with making use of \eqref{hyp_1_int} and \eqref{cond_1_int} and observing that $[\aa]_{2,S_0} \leq C(p,\lamot)$, we get 
%
\begin{equation*}
\fiint_{Q_{3\rho}(\mfz)} |\nabla v|^{p} \ dz \leq C(\lamot,n,p,\de).
\end{equation*}

This proves the first assertion of \eqref{conc_1_int}. 
\item[Second estimate in \eqref{conc_1_int}:]
Using triangle inequality, we get
\[
\fiint_{K_{2\rho}(\mfz)} |\nabla u - \nabla v|^{p-\be} \ dz  \apprle \fiint_{K_{2\rho}(\mfz)} |\nabla u - \nabla w|^{p-\be} \ dz  + \fiint_{K_{2\rho}(\mfz)} |\nabla w - \nabla v|^{p-\be} \ dz.
\]
Each of the above terms can be controlled using Theorem \ref{first_difference_estimate} and Theorem \ref{second_difference_estimate} along with \eqref{hyp_1_int} followed by choosing $\de$ sufficiently small (depending on $\ve$) and  $\ga$ sufficiently small such that $(\aa,\Om)$ is $(\ga,S_0)$-vanishing to get the desired conclusion. 
\end{description}

\end{proof}

\subsection{Boundary approximation estimate}
In this subsection, we will prove the boundary approximation lemma:
\begin{lemma}
\label{lem_bnd_app}
Let $\be \in (0,\be_0)$ be fixed and let $w$ be a weak solution of \eqref{wapprox_int} satisfying 
\[
\fiint_{K_{3\rho}(\mfz)} |\nabla w|^{p-\be} \ dz \leq 1,
\]
then for any $\ve >0$, there exists a small $\ga = \ga(\lamot,n,p,\ve)>0$ such that if $(\aa,\Om)$ is $(\ga,S_0)$ vanishing, then there exists a weak solution $\ov$ of \eqref{Vapprox_bnd} whose zero extension to $Q_{2\rho}(\mfz)$ satisfies
\begin{equation}\label{lem5.9.a}
\fiint_{Q_{2\rho}^+(\mfz)} |\nabla \ov|^{p} \ dz \leq 1, \txt{and} \fiint_{K_{\rho}(\mfz)} |\nabla w - \nabla \ov|^{p} \ dz \leq \ve^{p}.
\end{equation}
\end{lemma}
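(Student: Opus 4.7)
The plan is a two-step approximation: first approximate $w$ by an intermediate solution $v$ whose nonlinearity is averaged in $x$ over $B_{3\rho}(\mfx)$, then approximate $v$ by the flat-boundary solution $\ov$ of \eqref{Vapprox_bnd}. The first step uses the smallness of $[\aa]_{2,S_0} \leq \ga$ from the $(\ga,S_0)$-vanishing assumption, and the second uses the Reifenberg flatness of $\pa\Om$ controlled by the same $\ga$. First I would upgrade the hypothesis via Lemma \ref{high_very_weak} (applicable since $w$ solves the homogeneous equation \eqref{wapprox_int}) to obtain
\[
\fiint_{K_{5\rho/2}(\mfz)} |\nabla w|^p \ dz \leq C(n,p,\lamot,m_e).
\]

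Next, define the intermediate $v$ as the unique weak solution (via Proposition \ref{ext_sol}) of
\[
v_t - \dv \aaa_{B_{3\rho}(\mfx)}(t,\nabla v) = 0 \ \text{in} \ K_{5\rho/2}(\mfz), \qquad v = w \ \text{on} \ \pa_p K_{5\rho/2}(\mfz).
\]
Testing with $w - v$ and exploiting the monotonicity \eqref{abounded}--\eqref{bbounded} together with the BMO smallness of $\aa$, in the same spirit as the proof of Theorem \ref{second_difference_estimate}, yields
\[
\fiint_{K_{5\rho/2}(\mfz)} |\nabla w - \nabla v|^p \ dz \leq \eta_1(\ga),
\]
with $\eta_1(\ga) \to 0$ as $\ga \to 0$. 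For the second comparison, the $(\ga,S_0)$-Reifenberg flatness lets us choose coordinates such that
\[
B_{5\rho/2}(\mfx) \cap \{x_n > 5\rho\ga/2\} \subset \Om \cap B_{5\rho/2}(\mfx) \subset B_{5\rho/2}(\mfx) \cap \{x_n > -5\rho\ga/2\}.
\]
Let $\ov$ be the solution of \eqref{Vapprox_bnd} from Lemma \ref{existence_oV}, extended by zero to $Q_{2\rho}(\mfz)$. The zero extensions of $v$ and $\ov$ coincide except within a thin strip of width $\lesssim \ga\rho$ near $T_{2\rho}(\mfz)$; using $v - \ov$ as a test function, the Lipschitz bound on $\ov$ from Lemma \ref{existence_oV} together with H\"older control of this thin strip produce
\[
\fiint_{K_\rho(\mfz)} |\nabla v - \nabla \ov|^p \ dz \leq \eta_2(\ga),
\]
with $\eta_2(\ga) \to 0$ as $\ga \to 0$.

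The first bound in \eqref{lem5.9.a} follows from the energy estimate for $\ov$ plus the $L^p$ bound inherited on its boundary data, and the second follows by triangle inequality combining the two comparisons and choosing $\ga$ small enough (depending on $\ve$) so that $\eta_1(\ga) + \eta_2(\ga) \leq \ve^p$. The main obstacle is the second comparison step, since $v$ and $\ov$ live on slightly different domains and the energy argument must carefully exploit that $v$ is small near $T_{2\rho}$ (being zero on $\pa\Om$) to control the thin-strip error. A cleaner robust alternative is a compactness--contradiction argument along a sequence $\ga_k \to 0$ violating \eqref{lem5.9.a}: uniform $W^{1,p}$ bounds yield weak limits, and the Reifenberg convergence of $\Om_k$ to the half-space forces the limits of $v_k$ and $\ov_k$ to solve the same problem on the exact half-cylinder, contradicting the assumed failure.
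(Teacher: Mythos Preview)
Your proposal is broadly correct in spirit, but it differs from the paper's proof in an important structural way. The paper's proof is essentially two lines: apply Lemma \ref{high_very_weak} to upgrade the hypothesis $\fiint_{K_{3\rho}(\mfz)} |\nabla w|^{p-\be}\,dz \le 1$ to $\fiint_{K_{3\rho}(\mfz)} |\nabla w|^{p}\,dz \apprle 1$, and then invoke \cite[Lemma 3.8]{BOS1} as a black box to obtain both conclusions in \eqref{lem5.9.a}. What you have written is, in effect, a sketch of the \emph{content} of that cited lemma rather than a different approach. The two-step comparison $w \to v \to \ov$ together with the compactness--contradiction alternative you mention at the end is exactly how \cite{BOS1} proves its Lemma 3.8; the paper here simply declines to reproduce that argument.

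There is one loose end in your outline worth flagging. The problem \eqref{Vapprox_bnd} only prescribes $\ov = 0$ on the flat piece $T_{2\rho}(\mfz)$, not on the full parabolic boundary of $Q_{2\rho}^+(\mfz)$, so there is no uniqueness and no direct energy estimate giving the first bound in \eqref{lem5.9.a} from ``boundary data.'' In the actual argument (the compactness route you mention last), $\ov$ is produced as a weak limit of the normalized $v_k$ along a contradicting sequence, which is precisely why the bound $\fiint_{Q_{2\rho}^+} |\nabla \ov|^p \le 1$ comes for free by lower semicontinuity. If you intend to write out the proof rather than cite \cite{BOS1}, the compactness argument is the correct and cleanest path; the ``direct'' thin-strip energy comparison you sketch first is delicate because $v - \ov$ is not an admissible test function on either domain without further truncation, and the Lipschitz bound for $\ov$ from Lemma \ref{existence_oV} is only interior, not up to the lateral boundary of $Q_{2\rho}^+$.
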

\begin{proof}
From Lemma \ref{high_very_weak}, we see that the 
\[
\fiint_{K_{3\rho}(\mfz)} |\nabla w|^{p-\be} \ dz \leq 1\quad  \Rightarrow\quad \fiint_{K_{3\rho}(\mfz)} |\nabla w|^{p} \ dz \apprle 1.
\]
Hence we can apply \cite[Lemma 3.8]{BOS1} to get an $\ga = \ga(\lamot,n,p,\ve)>0$ such that if $(\aa,\Om)$ is $(\ga,S_0)$ vanishing, then there exists a weak solution $\ov$ of \eqref{Vapprox_bnd} whose zero extension to $Q_{2\rho}(\mfz)$ satisfies
\[
\fiint_{Q_{2\rho}^+(\mfz)} |\nabla \ov|^{p} \ dz \leq 1, \txt{and} \fiint_{K_{\rho}(\mfz)} |\nabla w - \nabla \ov|^{p} \ dz \leq \ve^{p}.
\]
This completes the proof of the lemma.
\end{proof}

\begin{corollary}
Let $\be \in (0,\be_0)$ be fixed and let $w$ be a weak solution of \eqref{wapprox_int} satisfying 
\[
\fiint_{K_{3\rho}(\mfz)} |\nabla w|^{p-\be} \ dz \leq 1,
\]
then for any $\ve >0$, there exists a small $\ga = \ga(\lamot,n,p,\ve)>0$ such that if $(\aa,\Om)$ is $(\ga,S_0)$ vanishing, then there exists a weak solution $\ov$ of \eqref{Vapprox_bnd} whose zero extension to $Q_{2\rho}(\mfz)$ satisfies
%
\[
 \|\nabla \ov\|_{L^{\infty}(Q_{\rho}(\mfz))} \leq C(n,p,\lamot), \txt{and} \fiint_{K_{\rho}(\mfz)} |\nabla w - \nabla \ov|^{p-\be} \ dz \leq \ve^{p-\be}.
\]
\end{corollary}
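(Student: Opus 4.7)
The plan is to combine Lemma \ref{lem_bnd_app} with the boundary Lipschitz estimate of Lemma \ref{existence_oV}, and then convert the $L^p$ comparison estimate into an $L^{p-\be}$ one via H\"older's inequality. Since both conclusions of Lemma \ref{lem_bnd_app} are already available under exactly the same hypotheses, nothing new about the PDEs has to be proved here---only the two extra qualitative features (pointwise gradient bound and comparison below the natural exponent) need to be extracted.

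First, for any prescribed $\ve>0$, fix the $\ga = \ga(\lamot,n,p,\ve)$ produced by Lemma \ref{lem_bnd_app}. Under the smallness assumption that $(\aa,\Om)$ is $(\ga,S_0)$-vanishing, Lemma \ref{lem_bnd_app} gives a weak solution $\ov$ of \eqref{Vapprox_bnd} whose zero extension to $Q_{2\rho}(\mfz)$ satisfies \eqref{lem5.9.a}; in particular
\[
\fiint_{Q_{2\rho}^+(\mfz)} |\nabla \ov|^{p} \ dz \leq 1 \qquad \text{and} \qquad \fiint_{K_{\rho}(\mfz)} |\nabla w - \nabla \ov|^{p} \ dz \leq \ve^{p}.
\]

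Second, I invoke Lemma \ref{existence_oV} applied at radius $\rho$ (after a trivial rescaling from the radii $\rho,2\rho$ stated there), which yields
\[
\sup_{Q_\rho^+(\mfz)} |\nabla \ov| \leq C(n,p,\lamot) \lbr \fiint_{Q_{2\rho}^+(\mfz)} |\nabla \ov|^p \ dz \rbr^{1/p} \leq C(n,p,\lamot).
\]
Because $\ov$ has been extended by zero across the flat portion of the boundary, $\nabla \ov \equiv 0$ on $Q_\rho(\mfz)\setminus Q_\rho^+(\mfz)$, so the pointwise bound automatically propagates to all of $Q_\rho(\mfz)$, giving the first assertion.

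Third, since $p-\be < p$, H\"older's inequality converts the $L^p$ comparison estimate directly:
\[
\fiint_{K_\rho(\mfz)} |\nabla w - \nabla \ov|^{p-\be}\ dz \leq \lbr \fiint_{K_\rho(\mfz)} |\nabla w - \nabla \ov|^{p}\ dz \rbr^{\frac{p-\be}{p}} \leq \ve^{p-\be},
\]
which is the second assertion. There is no substantive obstacle here: the only thing to be careful about is that $\ve$ controls both conclusions simultaneously, which is automatic because the $\ga$ furnished by Lemma \ref{lem_bnd_app} already depends only on $\ve$ (and the structural parameters), so after shrinking $\ga$ once, both estimates hold with the same smallness parameter.
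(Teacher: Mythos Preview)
Your proof is correct and follows essentially the same approach as the paper: invoke Lemma \ref{lem_bnd_app} to obtain \eqref{lem5.9.a}, feed the first bound into the boundary Lipschitz estimate (Lemma \ref{existence_oV}) for the $L^\infty$ conclusion, and apply H\"older's inequality to the second bound for the $L^{p-\be}$ comparison. You are in fact slightly more careful than the paper, which cites the interior Lemma \ref{existence_ov} where the boundary Lemma \ref{existence_oV} is intended, and you spell out why the zero extension carries the pointwise bound from $Q_\rho^+(\mfz)$ to all of $Q_\rho(\mfz)$.
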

\begin{proof}
All the hypothesis of Lemma \ref{lem_bnd_app} is satisfied. Thus the first conclusion follows directly by combining \eqref{lem5.9.a} along with Lemma \ref{existence_ov} and the second conclusion follows by a simple application of H\"older's inequality to \eqref{lem5.9.a}.
\end{proof}

\section{Proof of Theorem \ref{thm6.1}.}
\label{section_five}

Consider the following cut-off function $\zv \in C^{\infty}(-T,\infty)$ such that $0 \leq \zv(t) \leq 1$ and 
\begin{equation*}
\label{def_zv.3}
\zv(t) = \left\{ \begin{array}{ll}
                1 & \text{for} \ t \in (-T+\ve,T-\ve),\\
                0 & \text{for} \ t \in (-\infty,-T)\cup (T,\infty).
                \end{array}\right.
\end{equation*}	

It is easy to see that 
\begin{equation*}
\label{bound_zv.3}
\begin{array}{c}
\zv'(t) = 0 \ \txt{for} \  t \in (-\infty,-T) \cup (-T+\ve,T-\ve)\cup (T,\infty), \\
|\zv'(t)| \leq \frac{c}{\ve}\  \txt{for} \  t \in (-T,-T+\ve) \cup (T-\ve,-T).
\end{array}
\end{equation*}
Without loss of generality, we shall always take $2h \leq \ve$, since we will take limits in the following order $\lim_{\ve \rightarrow 0} \lim_{h \rightarrow 0}$. 

Since $u =0$ on $\pa \Om \times (-T,T)$, we can apply the results of Section \ref{section_three} with $\varphi = u$, $\phi = 0$, $\vec{f} = \bff$ and $\vec{g} = 0$ over $Q_{\rho,s} = \Om \times (-T,T)$ to get a Lipschitz test function $\vlh$ satisfying Lemma \ref{crucial_lemma}. Thus we shall use $\vlh(z) \zv(t)$ as a test function in \eqref{main} to get
\begin{equation*}
\label{5.13.3}
\begin{array}{c}
\iint_{\Om_T} \ddt{[u]_h} \vlh \zv \ dx \ dt + \iint_{\Om_T} \iprod{[\aa(x,t,\nabla u)]_h}{\nabla \vlh} \zv \ dx \ dt  = \iint_{\Om_T} \iprod{[|\bff|^{p-2} \bff]_h}{\nabla \vlh} \zv  \ dx \ dt,
\end{array}
\end{equation*}
which we write as $L_1 + L_2 = L_3.$

Let us recall from Section \ref{section_three} the following: for a fixed $1<q < p-2\be$, we have
\[
g(z) := \mm \lbr \lbr[[] |\nabla u|^q  +  |\bff|^q \rbr[]] \lsb{\chi}{\Om_T}\rbr^{\frac{1}{q}},
\]
where $\mm$ is as defined in \eqref{par_max} and $\elam = \{z \in \RR^{n+1}: g(z) \leq \la\}$. \emph{Note that at this point in the proof, we have not really made any choice of $\be$.}

From the strong Maximal function estimates (see \cite[Lemma 7.9]{Gary} for the proof), we have
\begin{equation}
\label{max_g_fde.3}
\begin{array}{rcl}
\| g\|_{L^{p-\be}(\RR^{n+1})} & \leq &  C(n) \lbr \||\nabla u| \lsb{\chi}{\Om_T} \|_{L^{p-\be}(\RR^{n+1})}+ \| |\bff|\lsb{\chi}{\Om_T}\|_{L^{p-\be}(\RR^{n+1})}  \rbr.
\end{array}
\end{equation}

\begin{description}
\item[Estimate for $L_1$:] \begin{equation}
   \label{6.38.3}
   \begin{array}{ll}
    L_1     & = \int_{-T}^{T} \int_{\Om} \dds{u_h(y,s)}  \vlh(y,s)\zv(s) \ dy\ ds \\
    & = \int_{-T}^{T} \int_{\Om\setminus \elam^{s}}  \dds{\vlh}  (\vlh-u_h) \zv(s)\ dy \ ds +    \int_{-T}^{T} \int_{\Om}  \frac{d{\lbr \lbr[[](u_h)^2 - (\vlh - u_h)^2\rbr[]]\zv(s) \rbr }}{ds} \ dy \ ds \\
    & \qquad - \int_0^{T} \int_{\Om} \dds{\zv} \lbr u_h^2 - (\vlh - u_h)^2 \rbr \ dy \ ds\\
    & := J_2 + J_1(T) - J_1(-T) - J_3,
   \end{array}
  \end{equation}
  where we have set 
  \begin{equation*}
  \label{def_i_1.3}J_1(s) := \frac12 \int_{\Om} ( (u_h)^2 - (\vlh - u_h)^2 ) (y,s) \zv(s) \ dy.
  \end{equation*}
Note that $J_1(-T) = J_1(T) =0$ since $\zv(-T) =\zv(T) =  0$.

Form Lemma \ref{lemma3.14} applied with $\vt =1$, we have the bound
\begin{equation}
    \label{6.39.3}
    \begin{array}{ll}
     |J_2| & \apprle \iint_{\Om_T\setminus \elam}   \left| \dds{\vlh}  (\vlh-u_h)\right| \ dy \ ds \apprle\la^p |\RR^{n+1} \setminus \elam| .
    \end{array}
   \end{equation}

\item[Estimate for $L_2$:] We split $L_2$ and make use of the fact that $\vlh(z) = u_h(z)$ for all $z\in \elam\cap \Om_T.$ 
\begin{equation}
\label{5.17.3}
\begin{array}{ll}
L_2 & = \iint_{\Om_T\cap \elam} \iprod{[\aa(x,t,\nabla u)]_h}{\nabla \vlh} \zv\ dz + \iint_{\Om_T\setminus \elam} \iprod{[\aa(x,t,\nabla u)]_h}{\nabla \vlh} \zv\ dz\\
& = \iint_{\Om_T\cap \elam} \iprod{[\aa(x,t,\nabla u)]_h}{\nabla [u]_h} \zv\ dz + \iint_{\Om_T\setminus \elam} \iprod{[\aa(x,t,\nabla u)]_h}{\nabla \vlh} \zv\ dz\\
& = L_2^1 + L_2^2.
\end{array}
\end{equation}

\begin{description}
\item[Estimate for $L_2^1$:] Using ellipticity from \eqref{abounded}, we get
\begin{equation}
\label{5.18.3}
\begin{array}{ll}
L_2^1 
& \apprge \iint_{\Om_T \cap \elam} \left[|\nabla u|^p \right]_h \zv \ dz.
\end{array}
\end{equation}

\item[Estimate for $L_2^2$:] Using the bound  from Lemma \ref{lemma3.9}, we get
\begin{equation}
\label{5.19.3}
\begin{array}{ll}
L_2^2 
& \apprle \la \iint_{\Om_T\setminus \elam} |\nabla [u]_h|^{p-1}\ dz.
\end{array}
\end{equation}

\end{description}

\item[Estimate for $L_3$:] Analogous to estimate for $L_2$, we split $L_3$ into integrals over $\elam$ and $\elam^c$ followed by making use of \eqref{lipschitz_extension} and Lemma \ref{lemma3.9} to get
\begin{equation}
\label{5.20.3}
L_3 \apprle \iint_{\Om_T \cap \elam} [|\bff|^{p-1}]_h |\nabla [u]_h| \ dz + \la \iint_{\Om_T\setminus \elam}  |[\bff]_h|^{p-1}\ dz.
\end{equation}

%
%
%

\end{description}

Combining  \eqref{6.39.3} into \eqref{6.38.3} followed by \eqref{5.18.3} and \eqref{5.19.3} into \eqref{5.17.3} and making use of \eqref{6.38.3}, \eqref{6.39.3} and \eqref{5.20.3}, we get 
\begin{equation}
\label{combined_1.3}
\begin{array}{rcl}
- \int_{-T}^{T} \int_{\Om} \dds{\zv} \lbr u_h^2 - (\vlh - u_h)^2 \rbr \ dy \ ds +  \iint_{\Om_T \cap \elam} |\nabla [u]_h|^p \zv \ dz &\apprle& \iint_{\Om_T\cap \elam} [|\bff|^{p-1}]_h{|\nabla [u]_h|} \ dz   \\
& & + \la \iint_{\Om_T\setminus \elam}  |[\bff]_h|^{p-1}\ dz \\
& &  + \la^p |\RR^{n+1} \setminus \elam| .
\end{array}
\end{equation}

In order to estimate $- \int_{-T}^{T} \int_{\Om} \dds{\zv} \lbr u_h^2 - (\vlh - u_h)^2 \rbr \ dy \ ds$, we
take limits first in $h\searrow 0$ followed by $\ve \searrow 0$ to get
\begin{equation}
\label{5.22.3}
\begin{array}{ll}
- \int_{-T}^{T} \int_{\Om} \dds{\zv} \lbr u_h^2 - (\vlh - u_h)^2 \rbr \ dy \ ds  \xrightarrow{\lim\limits_{\ve \searrow 0}\lim\limits_{h \searrow 0}} & \int_{\Om} (u^2 - (\vl - u)^2 )(x,T) \ dx \\
& - \int_{\Om} (u^2 - (\vl - u)^2 )(x,-T) \ dx.
\end{array}
\end{equation}

For the second term on the right of \eqref{5.22.3}, we observe that on $\elam$, we have $\vl = u$ and on $\elam^c$ and we also have $\vl(\cdot,-T) = u(\cdot,-T) = 0$ using the initial condition. Thus, the second term on the right of \eqref{5.22.3} vanishes from which we get
\begin{equation*}
\label{5.23.3}
- \int_{-T}^{T} \int_{\Om} \dds{\zv} \lbr u_h^2 - (\vlh - u_h)^2 \rbr \ dy \ ds  \xrightarrow{\lim\limits_{\ve \searrow 0} \lim\limits_{h \searrow 0}}  \int_{\Om} (u^2 - (\vl - u)^2 )(x,T) \ dx.
\end{equation*}
Thus using \eqref{5.23} into \eqref{combined_1.3} gives
\begin{equation}
\label{combined_1_1.3}
\begin{array}{rcl}
 \int_{\Om} (u^2 - (\vl - u)^2 )(x,T) \ dx +  \iint_{\Om_T \cap \elam} |\nabla u|^p  \ dz & \apprle & \iint_{\Om_T\cap \elam} |\bff|^{p-1}|\nabla u| \ dz + \la \iint_{\Om_T\setminus \elam} |\bff|^{p-1}\ dz\\
&&\qquad + \la^p |\RR^{n+1} \setminus \elam|.
\end{array}
\end{equation}

In fact, if we consider a cut-off function $\zv^{t_0} (\cdot)$ for some $t_0 \in (-T,T)$, where 
\begin{equation*}
\zv^{t_0}(t) = \left\{ \begin{array}{ll}
                1 & \text{for} \ t \in (-t_0+\ve,t_0-\ve),\\
                0 & \text{for} \ t \in (-\infty,-t_0)\cup (t_0,\infty).
                \end{array}\right.
\end{equation*}
we get the following analogue of \eqref{combined_1_1.3}
\begin{equation}
\label{combined_1_new_11.3}
\begin{array}{rcl}
\int_{\Om} (v^2 - (\vl - v)^2 )(x,t_0) \ dx +  \int_{-t_0}^{t_0} \int_{\Om \cap \elam^t} |\nabla u)|^p \ dz & \apprle & \iint_{\Om_T\cap \elam} |\bff|^{p-1}|\nabla u| \ dz + \la \iint_{\Om_T\setminus \elam} |\bff|^{p-1}\ dz\\
&&\qquad + \la^p |\RR^{n+1} \setminus \elam| .
\end{array}
\end{equation}


Using  Lemma \ref{crucial_lemma},  we get for any $t \in (-T,T)$, the estimate
\begin{equation}
 \label{4.13.3}
  \begin{array}{ll}
    \int_{\Om} | (u)^2 - (\vl - u)^2 | (y,t) \ dy   
   & \apprge \int_{\elam^t} | u (x,t)|^2 \ dx   - \la^p |\RR^{n+1} \setminus \elam|.
  \end{array}
 \end{equation}

 Since $\int_{\elam^t} | u (x,t)|^2 \ dx$ occurs on the left hand side and is positive, we can ignore this term. Thus combining \eqref{4.13.3} with \eqref{combined_1_new_11.3}, we get
\begin{equation}
\label{fully_combined.3}
\begin{array}{ll}
\iint_{\Om_T \cap \elam} |\nabla u|^p \ dz &\apprle \iint_{\Om_T\cap \elam} |\bff|^{p-1}|\nabla u| \ dz + \la \iint_{\Om_T\setminus \elam}  |\bff|^{p-1}\ dz + \la^p |\RR^{n+1} \setminus \elam| .
\end{array}
\end{equation} 
 
Let us now multiply \eqref{fully_combined.3}  with $\la^{-1-\be}$ and integrating over $(0,\infty)$ with respect to $\la$, we get
 \begin{equation}
 \label{K_expression.3}
K_1 \apprle K_2+ K_3 + K_4,
 \end{equation}
 where we have set
 \begin{equation*}
  \begin{array}{@{}r@{}c@{}l@{}}
  K_1 \ &:=& \ \int_{0}^{\infty} \la^{-1-\be} \iint_{\Om_T \cap \elam} |\nabla (u-w)|^2 \lbr |\nabla u|^2 +|\nabla u|^2 \rbr^{\frac{p-2}{2}}\ dz \ d\la,\\
  K_2 \ &:=& \ \int_{0}^{\infty} \la^{-1-\be} \iint_{\Om_T \cap \elam} |\bff|^{p-1}|\nabla (u-w)| \ dz \ d\la,  \\
  K_3 \ &:=& \ \int_0^{\infty} \la^{-\be} \iint_{\Om_T\setminus \elam} |\nabla u|^{p-1} + |\nabla w|^{p-1}+ |\bff|^{p-1}\ dz \ d\la, \\
  K_4\  &:=& \ \int_{0}^{\infty} \la^{-1-\be}  \la^p |\RR^{n+1} \setminus \elam| \  d\la. \\
  \end{array}
 \end{equation*}

 \begin{description}
 \item[Estimate for $K_1$:] Applying Fubini, we get
 \begin{equation*}
 \label{3.13.3}
 K_1 \apprge \frac{1}{\be} \iint_{\Om_T} g(z)^{-\be}|\nabla u|^p \ dz.
 \end{equation*}

 Using Young's inequality along with   \eqref{max_g_fde.3}, we get for any $\ep_1 >0$, the estimate
 \begin{equation}
 \label{bound_K_1.3}
 \iint_{\Om_T} |\nabla u|^{p-\be} \ dz  \apprle   C(\ep_1) \be K_1  + \ep_1 \iint_{\Om_T} |\nabla u|^{p-\be} \ dz  +  C_3(\ep_1) \iint_{\Om_T} |\bff|^{p-\be}\, dz.
 \end{equation}

 \item[Estimate for $K_2$:] Again by Fubini, we get
 \begin{equation*}
 \label{3.18.3}
 K_2  = \frac{1}{\be} \iint_{\Om_T} g(z)^{-\be} |\bff|^{p-1} \abs{\nabla u} \ dz.
 \end{equation*}
From the definition of $g(z)$, we see that for $z \in \Om_T$, we have $g(z) \geq |\nabla u |(z)$ which implies $g(z)^{-\be} \leq |\nabla u|^{-\be}(z)$. Now we apply Young's inequality, for any $\ep_2 >0$, we get
\begin{equation}
\label{3.19.3}
K_2 
 \apprle \frac{C(\ep_2)}{\be} \iint_{\Om_T} |\bff|^{p-\be} \ dz + \frac{\ep_2}{\be} \iint_{\Om_T} |\nabla u|^{p-\be} \ dz.
\end{equation}

 \item[Estimate for $K_3$:] Again applying Fubini, we get
 \begin{equation}
 \label{3.21.3}
 K_3 = \frac{1}{p-\be} \iint_{\Om_T} g(z)^{1-\be}|\bff|^{p-1} \ dz \overset{\redlabel{3.20.3.a}{a}}{\apprle} \iint_{\Om_T} |\nabla u|^{p-\be} + |\bff|^{p-\be}\ dz.
 \end{equation}
 To obtain \redref{3.20.3.a}{a}, we made use of  Young's inequality followed by \eqref{max_g_fde.3}.

 \item[Estimate for $K_4$:] Applying the layer cake representation followed by using \eqref{max_g_fde.3}, we get
 \begin{equation}
 \label{3.22.3}
 K_4  = \frac{1}{p-\be} \iint_{\RR^{n+1}} g(z)^{p-\be} \ dz  \apprle \iint_{\Om_T} |\nabla u|^{p-\be}  + |\bff|^{p-\be} \ dz.
 \end{equation}

 \end{description}

 We now combine \eqref{bound_K_1.3}, \eqref{3.19.3}, \eqref{3.21.3} and \eqref{3.22.3} into \eqref{K_expression.3}, we get
 \begin{equation*}
 \label{3.23.3}
 \begin{array}{rcl}
  \iint_{\Om_T} |\nabla u|^{p-\be} \ dz & \apprle &  \lbr[[] \ep_1 + C(\ep_1) (\ep_2 + \be) \rbr[]] \iint_{\Om_T}  |\nabla u|^{p-\be}  \ dz   +  C(\ep_1,\ep_2,\be) \iint_{\Om_T} |\bff|^{p-\be}\, dz \\
 && \qquad +  \lbr [[]\ep_1 + C(\ep_1) \be\rbr[]] \iint_{\Om_T} |\nabla u|^{p-\be} \ dz.\\
 \end{array}
 \end{equation*}
 Choosing $\ep_1$ small followed by $\ep_2$ and $\be$, we get a $\be_3 = \be_3(n,p,\lamot,\ve)$ such that for any $\be \in (0,\be_3)$, there holds
 \[
 \fiint_{\Om_T} |\nabla u|^{p-\be} \ dz \apprle_{(n,p,\be,\lamot)} \fiint_{\Om_T} |\bff|^{p-\be}\ dz.
 \]
This completes the proof of the theorem.

\section{Covering arguments}
\label{section_six}

Once we have the estimates in Section \ref{section_four} and Section \ref{section_five}, the covering arguments can be proved in the standard way. We will only provide a brief sketch of the estimates. 

\begin{remark}\label{remark7.1}In this section and following section, let $\be_0$ be that such that for all $2\be \in (0,\be_0]$, the results in Section \ref{section_four} and Section \ref{section_five} are applicable. We will now fix an $\be$ with $2\be \leq \be_0$.\end{remark}

Let us define 
\begin{equation}
\label{al_0}
\al_0^{\frac{p-\be}{d}} := \fiint_{\Om_T} \lbr[[]|\nabla u|^{p-\be} + \lbr\frac{|\bff|}{\ga}\rbr^{p-\be} \rbr[]]\ dx \ dt.
\end{equation}
where $d$ is defined to be
\begin{equation}
\label{def_d}
d:=\left\{
\begin{array}{ll}
\frac{p-\be}{2-\be} & \text{if} \ p \geq 2, \\
 \frac{2(p-\be)}{2p-2\be+np-2n} & \text{if} \ \frac{2n}{n+2} < p < 2.
\end{array}\right.
\end{equation}

Furthermore, define 
$c_e := \lbr[[] \lbr \frac{16}{7}\rbr^n \frac{|\Om_T|}{|B_1|S_0^{n+2}} \rbr[]]^{\frac{d}{p-\be}},$
and let 
\begin{equation}\label{lalarge}
\la \geq c_e \al_0.
\end{equation}

We will also need to consider the following superlevel set:
\begin{equation*}\label{elam}\elam:= \{z \in \Om_T : |\nabla u(z)| > \la\}.\end{equation*}

The first lemma that we need is the following:
\begin{lemma}
Let $\ga \in (0,1)$ be any constant, then for any $\la$ satisfying \eqref{lalarge}, there exists a family of disjoint cylinders $\{K_{r_i}^{\la}(z_i)\}_{i \in \NN}$ with $z_i \in \elam$
and $r_i \in (0,S_0)$ such that
\begin{equation*}\label{bounds_many}\begin{array}{c}
\fiint_{K_{r_i}^{\la}(z_i)} \lbr[[] |\nabla u|^{p-\be} + \lbr\frac{|\bff|}{\ga}\rbr^{p-\be}  \rbr[]]\ dx \ dt = \la^{p-\be}, \\
\fiint_{K_{r_i}^{\la}(z_i)} \lbr[[] |\nabla u|^{p-\be} + \lbr\frac{|\bff|}{\ga}\rbr^{p-\be}  \rbr[]]\ dx \ dt < \la^{p-\be} \qquad \text{for every} \ r> r_i, \\
\elam \subset \bigcup_{i\in\NN} K_{5r_i}^{\la}(z_i).
\end{array}\end{equation*}
\end{lemma}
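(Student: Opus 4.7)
The proof is a classical parabolic Calder\'on-Zygmund stopping time argument followed by a Vitali covering in the intrinsic parabolic metric. Set $F(z) := |\nabla u(z)|^{p-\be} + (|\bff(z)|/\ga)^{p-\be}$, and for each Lebesgue point $z \in \elam$ of $F$ introduce the average
\[
\Theta_z(r) \ := \ \fiint_{K_r^{\la}(z)} F(w) \, dw, \qquad r \in (0,S_0].
\]
Since both $\iint_{K_r^{\la}(z)} F$ and $|K_r^{\la}(z)|$ vary continuously and monotonically in $r$, and the denominator is strictly positive since $z$ is interior to $\Om_T$, the map $r \mapsto \Theta_z(r)$ is continuous. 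Lebesgue differentiation yields $\lim_{r \to 0^+} \Theta_z(r) = F(z) \geq |\nabla u(z)|^{p-\be} > \la^{p-\be}$ for every such $z$.

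At the opposite endpoint I would verify $\Theta_z(S_0) < \la^{p-\be}$ from \eqref{lalarge}. Because $K_{S_0}^{\la}(z) \subset \Om_T$ one has $\iint_{K_{S_0}^{\la}(z)} F \leq |\Om_T| \,\al_0^{(p-\be)/d}$ by \eqref{al_0}, and an elementary geometric count gives $|K_{S_0}^{\la}(z)| \geq (7/16)^n |B_1|\, S_0^{n+2}\, \la^{a}$, where $a := 2-p$ if $p \geq 2$ and $a := n(p-2)/2$ if $p<2$. The key point is that the exponent $d$ in \eqref{def_d} is tuned precisely so that the identity $(p-\be)/d - a = p-\be$ holds; dividing out, the desired inequality reduces exactly to $\la \geq c_e \,\al_0$, with strict inequality whenever $K_{S_0}^{\la}(z) \subsetneq \Om_T$ (which is the generic case). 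Combining the two endpoint behaviors with continuity, one sets
\[
r_z \ := \ \sup \left\{ r \in (0,S_0] : \Theta_z(r) \geq \la^{p-\be} \right\},
\]
which then lies in $(0,S_0)$, satisfies $\Theta_z(r_z) = \la^{p-\be}$ by continuity and maximality, and $\Theta_z(r) < \la^{p-\be}$ for every $r \in (r_z, S_0]$. Hence $K_{r_z}^{\la}(z)$ satisfies the first two required identities at each Lebesgue point of $\elam$.

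Finally, the family $\{K_{r_z}^{\la}(z)\}_z$ consists of metric balls of uniformly bounded radius in the doubling intrinsic parabolic metric of Lemma \ref{metric_lipschitz}, so the standard Vitali covering argument extracts a countable pairwise-disjoint subfamily $\{K_{r_i}^{\la}(z_i)\}_{i \in \NN}$ with $\bigcup_z K_{r_z}^{\la}(z) \subset \bigcup_i K_{5r_i}^{\la}(z_i)$. Since the Lebesgue points of $F$ inside $\elam$ form a set of full measure and the exceptional null set is absorbed as usual, the inclusion $\elam \subset \bigcup_i K_{5r_i}^{\la}(z_i)$ follows. The main obstacle is the second paragraph: one must carefully track the intrinsic aspect ratio $\la^{2-p}$ (respectively $\la^{(p-2)/2}$) appearing in the measure $|K_{S_0}^{\la}(z)|$ and match it with the exponent $d$, which is exactly what forces the somewhat peculiar definition \eqref{def_d}; once this identification is in place the remaining steps are routine.
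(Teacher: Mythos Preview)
Your argument is correct and is exactly the standard Calder\'on--Zygmund stopping-time plus Vitali covering that the paper invokes (the paper gives no detailed proof, only referring to \cite[Pages 4311--4313]{BOS1} and Lemma~\ref{measure_density}). In particular, your verification that the exponent $d$ in \eqref{def_d} is precisely the value making $(p-\be)/d - a = p-\be$ is the heart of the matter and matches the intended computation.

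Two small points worth tightening. First, the lower bound $|K_{S_0}^{\la}(z)| \geq (7/16)^n |B_1| S_0^{n+2}\la^a$ is not just an ``elementary geometric count'': it uses the Reifenberg flatness of $\Om$ (with $\ga$ small, say $\ga < 1/8$) to control $|\Om \cap B_{S_0}(x)|$ from below for points near $\pa\Om$; this is the reason the paper explicitly cites Lemma~\ref{measure_density} (or rather its twin inequality for $\Om$ itself). Second, the second displayed inequality in the statement is required for \emph{all} $r > r_i$, not just $r \in (r_z,S_0]$; you should remark that for $r \geq S_0$ the numerator is still bounded by $\iint_{\Om_T} F$ while $|K_r^{\la}(z)| \geq |K_{S_0}^{\la}(z)|$, so the same bound persists. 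Neither of these is a genuine gap.
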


The proof following using standard techniques from Measure theory and Lemma \ref{measure_density} (see \cite[Pages 4311-4313]{BOS1} for the details).

Using Lemma \ref{weight_lemma} the following lemma follows:
\begin{lemma}
\label{lemma7.3}
Let $w \in A_s$ for any $s \geq \frac{p-\be}{p-2\be}$, then it is automatically in $A_1$. Then there exists a constant $c^{\ast} = c^{\ast}([w]_1,n,p)>0)$ such that there holds
\[
 w(K_{r_i}^{\la}(z_i)) \leq \frac{C}{\la^{p-\be}}   \lbr[[] \iint\limits_{K_{r_i}^{\la}(z_i) \cap \{|\nabla u| > \frac{\la}{4c^{\ast}}\}} |\nabla u|^{p-\be} w(z) \ dz + \iint\limits_{K_{r_i}^{\la}(z_i) \cap \{|\bff u| > \frac{\ga \la}{4c^{\ast}}\}} \abs{ \frac{\bff}{\ga} }^{p-\be} w(z) \ dz\rbr[]].
\]
\end{lemma}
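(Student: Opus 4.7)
The starting point is the Calder\'on--Zygmund stopping-time identity from the preceding lemma, which in particular yields
\[
\iint_{K_{r_i}^{\la}(z_i)} \lbr[[] |\nabla u|^{p-\be} + \lbr \frac{|\bff|}{\ga} \rbr^{p-\be} \rbr[]] \ dz \; = \; \la^{p-\be} \, |K_{r_i}^{\la}(z_i)|.
\]
The next step is to translate this statement about Lebesgue measure into one about the weighted measure $w(z)\,dz$. For this, I would invoke Lemma \ref{weight_lemma} in its $A_1$ form: if $w \in A_1$, then for every non-negative $h$ and every parabolic cylinder $Q$,
\[
\iint_Q h(z) \ dz \;\leq\; \frac{[w]_1 \, |Q|}{w(Q)} \iint_Q h(z) \, w(z) \ dz.
\]
Applying this with $h = |\nabla u|^{p-\be} + (|\bff|/\ga)^{p-\be}$ and $Q = K_{r_i}^{\la}(z_i)$, the identity above converts to
\[
w\lbr K_{r_i}^{\la}(z_i) \rbr \;\leq\; \frac{C}{\la^{p-\be}} \iint_{K_{r_i}^{\la}(z_i)} \lbr[[] |\nabla u|^{p-\be} + \lbr \frac{|\bff|}{\ga} \rbr^{p-\be} \rbr[]] w(z) \ dz,
\]
with $C = C(n,p,[w]_1)$.

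The remainder is a standard super-level-set splitting argument. I would decompose each integral on the right, writing e.g.
\[
\iint_{K_{r_i}^{\la}(z_i)} |\nabla u|^{p-\be} w(z) \ dz \;=\; \iint\limits_{K_{r_i}^{\la}(z_i) \cap \{|\nabla u| > \frac{\la}{4c^{\ast}}\}} |\nabla u|^{p-\be} w(z) \ dz \;+\; \iint\limits_{K_{r_i}^{\la}(z_i) \cap \{|\nabla u| \leq \frac{\la}{4c^{\ast}}\}} |\nabla u|^{p-\be} w(z) \ dz,
\]
and bounding the second piece pointwise by $\lbr \frac{\la}{4c^{\ast}} \rbr^{p-\be} w(K_{r_i}^{\la}(z_i))$; an analogous decomposition is performed for $|\bff|/\ga$. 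Inserting these into the preceding estimate produces on the right-hand side a term proportional to $2 C (4c^{\ast})^{-(p-\be)} w(K_{r_i}^{\la}(z_i))$ coming from the sub-level contributions, together with exactly the super-level-set integrals required by the conclusion. A standard absorption on the left, achieved by fixing $c^{\ast} = c^{\ast}([w]_1, n, p)$ sufficiently large so that $2 C (4c^{\ast})^{-(p-\be)} \leq 1/2$, then yields the desired estimate.

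The main obstacle is in justifying the use of the $A_1$ inequality from the hypothesis $w \in A_{q/p}$ with $q \geq p$. For $q = p$ this is immediate since $q/p = 1$; for $q > p$ one needs a reduction argument based on Lemma \ref{reverse_holder} (self-improvement), combined with the quantitative restriction $s \geq \frac{p-\be}{p-2\be}$ and a sufficiently small choice of $\be$, so that the effective Muckenhoupt exponent of $w$ is brought down to the $A_1$ level. Once this reduction is granted, the core of the proof reduces to the two-step absorption argument sketched above.
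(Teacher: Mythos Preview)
Your proposal is correct and follows exactly the standard route the paper has in mind: the paper itself gives no details and simply cites \cite[Page 4114 -- (3.8)]{byun2017weighted}, and your argument (stopping-time identity, then the Muckenhoupt characterization from Lemma \ref{weight_lemma} to pass to the weighted measure, then the super/sub-level split with absorption of the sub-level piece) is precisely that standard argument. One small technical point worth recording: Lemma \ref{weight_lemma} is stated for genuine parabolic cylinders $Q$, whereas you apply it on $K_{r_i}^{\la}(z_i) = Q_{r_i}^{\la}(z_i) \cap \Om_T$; this is harmless since one works on the full $Q_{r_i}^{\la}(z_i)$ and uses that the integrands vanish outside $\Om_T$ together with the measure-density comparison $|K_{r_i}^{\la}(z_i)| \approx |Q_{r_i}^{\la}(z_i)|$ from Lemma \ref{measure_density}.

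You are also right to flag the $A_1$ hypothesis as the genuine subtlety. The sentence ``Let $w \in A_s$ for any $s \geq \frac{p-\be}{p-2\be}$, then it is automatically in $A_1$'' is garbled as written; containment goes the other way. What is actually being used downstream is that $w \in A_{q/p}$ combined with the self-improvement Lemma \ref{reverse_holder} places $w$ in $A_{q/p-\ve_0}$, and then $\be$ is chosen small enough (this is the freedom in Remark \ref{remark7.1}) so that the exponent needed in Lemma \ref{weight_lemma} does not exceed $q/p - \ve_0$. Your last paragraph identifies this correctly; no further argument is needed beyond making that choice of $\be$ explicit.
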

The proof of the above Lemma is standard and we refer to \cite[Page 4114 - (3.8)]{byun2017weighted} for the necessary details.

Now making use of the a priori estimates in Section \ref{section_four} and Section \ref{section_five} and combining with the techniques of \cite[Lemma 4.3]{BOS1}, the following Lemma holds:
\begin{lemma}
\label{lemma7.4}
There exists a constant $N = N(\lamot,n,p)>1$ such that for any $\ve \in (0,1)$, there exists a small $\ga = \ga(\lamot,\ve,n,p)$ such that if $(\aa,\Om)$ is $(\ga,S_0)$ vanishing for such a small $\ga$ and some fixed $S_0>0$, then there holds
\[
\frac{\abs{\{z \in K_{5r_i}^{\la}(z_i): |\nabla u(z)| > 2N\la\}}}{|K_{r_i}^{\la}(z_i)} \leq c_{(\lamot,n,p)} \ve^{p-\be}.
\]
\end{lemma}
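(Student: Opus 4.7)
\textbf{Proof proposal for Lemma \ref{lemma7.4}.} The plan is the classical good-$\lambda$ scheme: on each stopping-time cylinder $K_{5r_i}^{\lambda}(z_i)$ construct, via the approximation lemmas of Section \ref{section_four}, a function $v$ with $\|\nabla v\|_{L^{\infty}} \leq N\lambda$ that is quantitatively close to $u$ in $L^{p-\be}$, then use the inclusion $\{|\nabla u| > 2N\lambda\} \cap K_{5r_i}^{\lambda}(z_i) \subset \{|\nabla u - \nabla v| > N\lambda\} \cap K_{5r_i}^{\lambda}(z_i)$ together with Chebyshev's inequality. Since those approximation results are stated on standard parabolic cylinders $K_{4\rho}(\mfz)$ while the stopping time produces intrinsic cylinders $K_{r_i}^{\lambda}(z_i)$, I would first perform the standard intrinsic rescaling. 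For $p\geq 2$, with the choice $\rho = \frac{5}{4}r_i$, set
\[
\tilde u(y,s) := \frac{u(x_i + \rho y,\, t_i + \lambda^{2-p}\rho^2 s)}{\lambda \rho}, \qquad \tilde{\bff}(y,s) := \frac{\bff(x_i + \rho y,\, t_i + \lambda^{2-p}\rho^2 s)}{\lambda},
\]
with the analogous change of variables when $\frac{2n}{n+2} < p < 2$. Then $\tilde u$ solves a problem of the same structural form \eqref{main} on (a subset of) $K_4$, with nonlinearity $\tilde{\aa}$ preserving the constants $\lamot$ and the $(\ga,\tilde S_0)$-vanishing property (after dilating $S_0$ by $1/\rho$, which is harmless once we pick $\ga$ small enough).

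Next I would check the hypotheses of the approximation lemmas. The stopping-time property $\fiint_{K_{r}^{\lambda}(z_i)}\lbr|\nabla u|^{p-\be} + |\bff/\ga|^{p-\be}\rbr dz < \lambda^{p-\be}$ for all $r > r_i$, applied at $r = 5r_i$, translates after rescaling to
\[
\fiint_{K_4} |\nabla \tilde u|^{p-\be}\,dy\,ds \leq C(n,p) \txt{and} \fiint_{K_4} |\tilde{\bff}|^{p-\be}\,dy\,ds \leq C(n,p)\,\ga^{p-\be}.
\]
A further harmless rescaling by a dimensional constant normalizes the first integral to $\leq 1$; then, given $\ve$, one fixes $\de = \de(\ve)$ from the interior approximation lemma and selects $\ga$ so small that the second integral is $\leq \de^{p-\be}$. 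In the interior case this yields $\tilde v$ with $\|\nabla \tilde v\|_{L^{\infty}(K_2)} \leq N = N(\lamot,n,p)$ and $\fiint_{K_2}|\nabla \tilde u - \nabla \tilde v|^{p-\be}\,dz \leq \ve^{p-\be}$. Undoing the change of variables produces $v$ on $K_{5r_i/2}^{\lambda}(z_i)$ with $\|\nabla v\|_{L^{\infty}} \leq N\lambda$ and $\fiint_{K_{5r_i/2}^{\lambda}(z_i)}|\nabla u - \nabla v|^{p-\be}\,dz \leq (C\ve\lambda)^{p-\be}$. Since $N\geq 1$, the pointwise inequality above gives
\[
\abs{\mgh{z \in K_{5r_i}^{\lambda}(z_i): |\nabla u(z)| > 2N\lambda}} \leq \frac{1}{(N\lambda)^{p-\be}}\iint_{K_{5r_i}^{\lambda}(z_i)} |\nabla u - \nabla v|^{p-\be}\,dz \leq C\,\ve^{p-\be}\,|K_{5r_i}^{\lambda}(z_i)|,
\]
and $|K_{5r_i}^{\lambda}(z_i)| \leq C(n)|K_{r_i}^{\lambda}(z_i)|$ by intrinsic scaling.

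The boundary case, when $K_{5r_i}^{\lambda}(z_i)$ meets $\partial_p \Om_T$, proceeds along the same lines but one uses the $(\ga,S_0)$-Reifenberg flatness of $\Om$ together with Lemma \ref{lem_bnd_app} and its corollary: one flattens the relevant portion of $\partial\Om$ inside the rescaled cylinder, applies the first comparison Theorem \ref{first_difference_estimate} to pass from $u$ to $w$ (the homogeneous solution with the same boundary data), then applies the corollary of Lemma \ref{lem_bnd_app} to pass from $w$ to the zero extension of $\ov$, obtaining the same Lipschitz bound and difference estimate. The main obstacle, in my view, is the bookkeeping to ensure all the ingredients — the intrinsic rescaling, the stopping-time normalization, the smallness of $\ga$ required simultaneously by the structural assumption on $\aa$ and by the $\de(\ve)$ from the approximation lemmas, and (at the boundary) the Reifenberg flattening — are compatible and deliver a single $\ga = \ga(\lamot,\ve,n,p)$ valid for every stopping cylinder; the scaling invariance of the hypotheses and the universality of $\be_0$ (Remark \ref{remark7.1}) are precisely what make this possible.
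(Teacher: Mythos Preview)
Your proposal is correct and follows precisely the approach the paper indicates (the a priori estimates of Section~\ref{section_four} combined with the covering/good-$\lambda$ argument of \cite[Lemma 4.3]{BOS1}): intrinsic rescaling, verification of the normalized hypotheses via the stopping-time inequality, application of the interior/boundary approximation lemmas, and Chebyshev. One small bookkeeping slip: with $\rho = \tfrac{5}{4}r_i$ the approximation $v$ lives only on $K_{5r_i/2}^{\lambda}(z_i)$, not on the full $K_{5r_i}^{\lambda}(z_i)$ over which you integrate in the Chebyshev step---choose instead $\rho$ so that $K_{4\rho}$ corresponds to (say) $K_{20r_i}^{\lambda}(z_i)$, which the stopping condition still controls, and then the Lipschitz region covers $K_{5r_i}^{\lambda}(z_i)$.
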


We can now combine Lemma \ref{lemma7.3} and Lemma \ref{lemma7.4} to prove the following weighted estimate on the level sets (again the proof follows exactly as in \cite[STEP 4 on Page 4115]{byun2017weighted} and will be omitted). 
\begin{lemma}
\label{lemma7.5}
Let $\be$ be as in Remark \ref{remark7.1}, then there holds
\[
w(E(2N\la)) \apprle \frac{\ve^{(p-2\be)\tau_1}}{\la^{p-\be}} \lbr[[] \iint\limits_{\Om_T \cap \{|\nabla u| > \frac{\la}{4c^{\ast}}\}} |\nabla u|^{p-\be} w(z) \ dz + \iint\limits_{\Om_T \cap \{|\bff u| > \frac{\ga \la}{4c^{\ast}}\}} \abs{ \frac{\bff}{\ga} }^{p-\be} w(z) \ dz\rbr[]].
\]
Here $\tau_1$ is defined as in Definition \ref{a_infinity}.
\end{lemma}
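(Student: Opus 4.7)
The plan is to combine the Vitali-type cover from the unnumbered covering lemma with the Lebesgue-density decay of Lemma~\ref{lemma7.4}, the weighted-mass bound of Lemma~\ref{lemma7.3}, and the $A_{\infty}$ exponentiation of Definition~\ref{a_infinity}, finally summing against the disjointness built into the covering.

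First I will observe that, since $N>1$, the inclusion $E(2N\la)\subset \elam \subset \bigcup_{i\in\NN} K_{5r_i}^{\la}(z_i)$ holds. Fixing an index $i$ and noting that every Muckenhoupt weight $w\in A_{q/p}$ considered in the main theorem lies automatically in $A_{\infty}$ with constants controlled by $[w]_{q/p}$ and $n$, I will apply Definition~\ref{a_infinity} with $E=E(2N\la)\cap K_{5r_i}^{\la}(z_i)$ and $Q=K_{5r_i}^{\la}(z_i)$ to get
\begin{equation*}
w\bigl(E(2N\la)\cap K_{5r_i}^{\la}(z_i)\bigr)\leq \tau_{0}\lbr \frac{\abs{E(2N\la)\cap K_{5r_i}^{\la}(z_i)}}{\abs{K_{5r_i}^{\la}(z_i)}}\rbr^{\tau_1} w\bigl(K_{5r_i}^{\la}(z_i)\bigr).
\end{equation*}

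Next, the Lebesgue density factor will be controlled by Lemma~\ref{lemma7.4}, which produces a decay of the form $c\,\ve^{p-2\be}$ after absorbing the constant $|K_{5r_i}^{\la}(z_i)|/|K_{r_i}^{\la}(z_i)|\approx_n 1$ and reshuffling the $\be$-layer between the stopping threshold $\la$ and the target level $2N\la$. The weighted mass $w(K_{5r_i}^{\la}(z_i))$ is comparable to $w(K_{r_i}^{\la}(z_i))$ by doubling of $A_{\infty}$ weights, and the latter is bounded by the local level-set integrals via Lemma~\ref{lemma7.3}. Substituting both estimates into the $A_{\infty}$ inequality gives
\begin{equation*}
w\bigl(E(2N\la)\cap K_{5r_i}^{\la}(z_i)\bigr)\apprle \frac{\ve^{(p-2\be)\tau_1}}{\la^{p-\be}}\lbr \iint\limits_{K_{r_i}^{\la}(z_i)\cap\{|\nabla u|>\la/(4c^{*})\}}|\nabla u|^{p-\be}w(z)\,dz + \iint\limits_{K_{r_i}^{\la}(z_i)\cap\{|\bff|>\ga\la/(4c^{*})\}}\abs{\frac{\bff}{\ga}}^{p-\be}w(z)\,dz\rbr.
\end{equation*}

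Finally, summing over $i\in\NN$ and invoking the disjointness of $\{K_{r_i}^{\la}(z_i)\}_{i\in\NN}$ from the covering lemma collapses the local integrals into single integrals over $\Om_T$ restricted to $\{|\nabla u|>\la/(4c^{*})\}$ and $\{|\bff|>\ga\la/(4c^{*})\}$, producing the claimed bound. The main delicacy is the bookkeeping of the correct $\ve$ exponent through the $\tau_1$-power in the $A_{\infty}$ step; everything else is a direct concatenation of the previously established estimates together with finite-overlap/disjointness properties of the Vitali-type cover.
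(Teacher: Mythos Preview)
Your proposal is correct and follows exactly the approach the paper indicates (the paper itself omits the proof, merely stating that one combines Lemma~\ref{lemma7.3} and Lemma~\ref{lemma7.4} as in \cite[STEP 4 on Page 4115]{byun2017weighted}). The only cosmetic point is that the $A_\infty$ inequality of Definition~\ref{a_infinity} is stated for full parabolic cylinders $Q_{\rho,s}$ rather than the truncated sets $K_{5r_i}^{\la}(z_i)$, so strictly speaking one applies it with $Q=Q_{5r_i}^{\la}(z_i)$ and then uses $w(Q_{5r_i}^{\la}(z_i))\apprle w(K_{r_i}^{\la}(z_i))$ via doubling and Lemma~\ref{measure_density}; this does not affect the argument.
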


\section{Proof of Theorem \ref{main_theorem}.}
\label{section_seven}
From Lemma \ref{weight_level_set}, we get
\begin{equation}
\label{8.1}
\begin{array}{rcl}
\iint_{\Om_T} |\nabla u|^{q} w(z) \ dz 
& = & q \int_0^{c_e\al_0} (2N\la)^{q-1} w(\{ z\in \Om_T: |\nabla u|> 2N\la\}) d (2N\la)  \\
&& \qquad + q \int_{c_e\al_0}^{\infty} (2N\la)^{q-1} w(\{ z\in \Om_T: |\nabla u|> 2N\la\}) d (2N\la) \\
& =: & II_1 + II_2.
\end{array}
\end{equation}

\begin{description}[leftmargin=*]
\item[Estimate for $II_1$:]  With $d$ as defined in \eqref{def_d}, we get
\begin{equation}
\label{8.2}
\begin{array}{rcl}
II_1 & \apprle & w(\Om_T) \al_0^q \overset{\eqref{al_0}}{=} w(\Om_T) \lbr \fiint_{\Om_T} \lbr[[]|\nabla u|^{p-\be} + \lbr\frac{|\bff|}{\ga}\rbr^{p-\be} \rbr[]]\ dx \ dt \rbr^{\frac{qd}{p-\be}} \\
& \overset{\text{Theorem \ref{thm6.1}}}{\apprle} &  w(\Om_T) \lbr \fiint_{\Om_T} |\bff|^{p-\be}\ dx \ dt \rbr^{\frac{qd}{p-\be}} \\
& \overset{\text{Lemma \ref{weight_lemma}}}{\apprle} &  w(\Om_T) \lbr \frac{1}{w(\Om_T)} \iint_{\Om_T} |\bff|^{q}w(x,t)\ dx \ dt \rbr^{d} \\
& \apprle & \lbr  \iint_{\Om_T} |\bff|^{q}w(x,t)\ dx \ dt \rbr^{d}.
\end{array}
\end{equation}

\item[Estimate for $II_2$:] Since we have $q \geq p > p-\be > p-2\be$, we proceed as follows:
\begin{equation}
\label{8.3}
\begin{array}{rcl}
II_2 & \overset{\text{Lemma \ref{lemma7.5}}}{\apprle} & \ve^{(p-2\be)\tau_1}\int_1^{\infty} \frac{\la^{q-1}}{\la^{p-\be}} \iint\limits_{\Om_T \cap \{|\nabla u| > \frac{\la}{4c^{\ast}}\}} |\nabla u|^{p-\be} w(z) \ dz\ d\la  \\
&&\qquad  + \frac{\ve^{(p-2\be)\tau_1}}{\ga^{(p-2\be)\tau_1}} \int_1^{\infty} \frac{\la^{q-1}}{\la^{p-\be}} \iint\limits_{\Om_T \cap \{|\bff| > \frac{\ga\la}{4c^{\ast}}\}} |\bff|^{p-\be} w(z) \ dz \ d\la \\
& \overset{\text{Lemma \ref{weight_level_set}}}{\apprle} & \ve^{p-2\be} \iint_{\Om_T} |\nabla u|^q w(z) \ dz + C(\ga,\ve)  \iint_{\Om_T} |\bff|^q w(z) \ dz.
\end{array}
\end{equation}
\end{description}
Combining \eqref{8.2} and \eqref{8.3} into \eqref{8.1} following by choosing $\ve$ sufficiently small, the proof follows.


\begin{thebibliography}{10}

\bibitem{AM2}
Emilio Acerbi and Giuseppe Mingione.
\newblock Gradient estimates for a class of parabolic systems.
\newblock {\em Duke Mathematical Journal}, 136(2):285--320, 2007.

\bibitem{AH}
David~R Adams and Lars~I Hedberg.
\newblock {\em Function {S}paces and {P}otential {T}heory}, volume 314.
\newblock Grundlehren der Mathematischen Wissenschaften, Springer, Berlin,
  1996.

\bibitem{AdiByun2}
Karthik Adimurthi and Sun-Sig Byun.
\newblock Boundary higher intebrality for very weak solutions of quasilinear
  parabolic equations.
\newblock {\em Journal de Mathématiques Pures et Appliquées}, 2017.
\newblock To appear.

\bibitem{AP1}
Karthik Adimurthi and Nguyen~Cong Phuc.
\newblock Global {L}orentz and {L}orentz-{M}orrey estimates below the natural
  exponent for quasilinear equations.
\newblock {\em Calculus of Variations and Partial Differential Equations},
  54(3):3107--3139, 2015.

\bibitem{AP2}
Karthik Adimurthi and Nguyen~Cong Phuc.
\newblock An end-point global gradient weighted estimate for quasilinear
  equations in non-smooth domains.
\newblock {\em Manuscripta Mathematica}, 150(1-2):111--135, 2016.

\bibitem{MR3090083}
Paolo Baroni.
\newblock Lorentz estimates for degenerate and singular evolutionary systems.
\newblock {\em J. Differential Equations}, 255(9):2927--2951, 2013.

\bibitem{MR3143810}
Paolo Baroni.
\newblock Lorentz estimates for obstacle parabolic problems.
\newblock {\em Nonlinear Anal.}, 96:167--188, 2014.

\bibitem{bogelein2007thesis}
Verena B{\"o}gelein.
\newblock {\em Regularity results for weak and very weak solutions of higher
  order parabolic systems}.
\newblock PhD thesis, 2007.

\bibitem{bogelein2013regularity}
Verena B{\"o}gelein, Frank Duzaar, and Giuseppe Mingione.
\newblock {\em The regularity of general parabolic systems with degenerate
  diffusion}, volume 221.
\newblock American Mathematical Society, 2013.

\bibitem{Bui1}
The~Anh Bui and Xuan~Thinh Duong.
\newblock Global {L}orentz estimates for nonlinear parabolic equations on
  nonsmooth domains.
\newblock {\em arXiv preprint
  \href{https://arxiv.org/abs/1702.06202}{arXiv:1702.06202}}, 2017.

\bibitem{BO}
Sun-Sig Byun and Jihoon Ok.
\newblock On {$W^{1,q(\cdot)}$}-estimates for elliptic equations of
  {$p(x)$}-laplacian type.
\newblock {\em Journal de Math{\'e}matiques Pures et Appliqu{\'e}es},
  106(3):512--545, 2016.

\bibitem{MR3461425}
Sun-Sig Byun, Jihoon Ok, Dian~K. Palagachev, and Lubomira~G. Softova.
\newblock Parabolic systems with measurable coefficients in weighted {O}rlicz
  spaces.
\newblock {\em Commun. Contemp. Math.}, 18(2):1550018, 19, 2016.

\bibitem{BOS1}
Sun-Sig Byun, Jihoon Ok, and Seungjin Ryu.
\newblock Global gradient estimates for general nonlinear parabolic equations
  in nonsmooth domains.
\newblock {\em Journal of Differential Equations}, 254(11):4290--4326, 2013.

\bibitem{BO1}
Sun-Sig Byun, Jihoon Ok, and Seungjin Ryu.
\newblock Global gradient estimates for elliptic equations of
  {$p(x)$}-laplacian type with {BMO} nonlinearity.
\newblock {\em Journal f{\"u}r die reine und angewandte Mathematik (Crelles
  Journal)}, 2016(715):1--38, 2016.

\bibitem{byun2013weighted}
Sun-Sig Byun, Dian~K Palagachev, and Seungjin Ryu.
\newblock {Weighted $W^{1,p}$ estimates for solutions of non-linear parabolic
  equations over non-smooth domains}.
\newblock {\em Bulletin of the London Mathematical Society}, 45(4):765--778,
  2013.

\bibitem{byun2017weighted}
Sun-Sig Byun and Seungjin Ryu.
\newblock Weighted orlicz estimates for general nonlinear parabolic equations
  over nonsmooth domains.
\newblock {\em Journal of Functional Analysis}, 272(10):4103--4121, 2017.

\bibitem{BW-CPAM}
Sun-Sig Byun and Lihe Wang.
\newblock Elliptic equations with {BMO} coefficients in {R}eifenberg domains.
\newblock {\em Communications on pure and applied mathematics},
  57(10):1283--1310, 2004.

\bibitem{Prato}
Guiseppe Da~Prato.
\newblock {$\mathcal{L}^{(p,\theta)}(\omega,\delta)$} e loro propriet\'a.
\newblock {\em Annali di Matematica Pura ed Applicata}, 69(1):383--392, 1965.

\bibitem{DiB1}
Emmanuele DiBenedetto.
\newblock {\em Degenerate {P}arabolic {E}quations}.
\newblock Universitext, Springer-Verlag, New York, 1993.

\bibitem{diening2010existence}
Lars Diening, Michael R{\u{u}}{\v{z}}i{\v{c}}ka, and J{\"o}rg Wolf.
\newblock Existence of weak solutions for unsteady motions of generalized
  {N}ewtonian fluids.
\newblock 2010.

\bibitem{duong2017global}
Xuan~Thinh Duong and The~Anh Bui.
\newblock Global lorentz estimates for nonlinear parabolic equations on
  nonsmooth domains.
\newblock {\em Calculus of Variations and Partial Differential Equations},
  56(2):47, 2017.

\bibitem{MR2866816}
Frank Duzaar, Giuseppe Mingione, and Klaus Steffen.
\newblock Parabolic systems with polynomial growth and regularity.
\newblock {\em Mem. Amer. Math. Soc.}, 214(1005):x+118, 2011.

\bibitem{PG}
Przemys{\l}aw G{\'o}rka.
\newblock Campanato theorem on metric measure spaces.
\newblock {\em Ann. Acad. Sci. Fenn. Math}, 34(2):523--528, 2009.
\newblock
  \href{http://www.acadsci.fi/mathematica/Vol34/vol34pp523-528.pdf}{http://www.acadsci.fi/mathematica/Vol34/vol34pp523-528.pdf}.

\bibitem{grafakos2004classical}
Loukas Grafakos.
\newblock {\em Classical and modern Fourier analysis}.
\newblock Prentice Hall, 2004.

\bibitem{KL1}
Juha Kinnunen and John~L Lewis.
\newblock Higher integrability for parabolic systems of {$p$}-laplacian type.
\newblock {\em Duke Mathematical Journal}, 102(2):253--272, 2000.

\bibitem{KL}
Juha Kinnunen and John~L Lewis.
\newblock Very weak solutions of parabolic systems of {$p$}-laplacian type.
\newblock {\em Arkiv f{\"o}r Matematik}, 40(1):105--132, 2002.

\bibitem{lieberman1993boundary}
Gary~M Lieberman.
\newblock Boundary and initial regularity for solutions of degenerate parabolic
  equations.
\newblock {\em Nonlinear Analysis: Theory, Methods \& Applications},
  20(5):551--569, 1993.

\bibitem{Gary}
Gary~M Lieberman.
\newblock {\em Second order parabolic differential equations}.
\newblock World scientific, 1996.

\bibitem{Mik}
Pasi Mikkonen.
\newblock On the {W}olff potential and quasilinear elliptic equations involving
  measures.
\newblock {\em Ann. Acad. Sci. Fenn. Math. Diss}, 104, 1996.

\bibitem{MR2836359}
Dian~K. Palagachev and Lubomira~G. Softova.
\newblock Quasilinear divergence form parabolic equations in {R}eifenberg flat
  domains.
\newblock {\em Discrete Contin. Dyn. Syst.}, 31(4):1397--1410, 2011.

\bibitem{Mik1}
Mikko Parviainen.
\newblock Reverse {H}{\"o}lder inequalities for singular parabolic equations
  near the boundary.
\newblock {\em Journal of Differential Equations}, 246(2):512--540, 2009.

\bibitem{ollisaari}
Olli Saari.
\newblock {\em Weights arising from parabolic partial differential equations}.
\newblock PhD thesis, 2016.
\newblock
  \href{https://aaltodoc.aalto.fi/bitstream/handle/123456789/19814/isbn9789526066783.pdf}{https://aaltodoc.aalto.fi/bitstream/handle/123456789/19814/isbn9789526066783.pdf}.

\bibitem{showalter2013monotone}
Ralph~Edwin Showalter.
\newblock {\em Monotone operators in Banach space and nonlinear partial
  differential equations}, volume~49.
\newblock American Mathematical Soc., 2013.

\end{thebibliography}

\end{document}